\newtheorem{thm}{Theorem}[section]
\newtheorem{defi}[thm]{Definition}%[section]
\newtheorem{lemma}[thm]{Lemma}%[section]
\newtheorem{prop}[thm]{Proposition}%[section]
\newtheorem{cor}[thm]{Corollary}%[section]
\newtheorem{exa}[thm]{Example}%[section]
\newtheorem{rema}[thm]{Remark}%[section]
\title{Generalized Goulden-Yong duals and signed minimal factorizations}
\author{Shujian Chen and Kiyoshi Igusa}
\date{}
\begin{document}

\maketitle

\begin{abstract}
    We show the equivalence between one-way reflections and relatively projective representations. We construct generalized Goulden-Yong duals using reverse Garside element actions and folded chord diagrams. We give two applications of the generalized Goulden-Yong duals: constructing generalized Pr\"{u}fer codes and counting signed factorizations using the matrix-tree theorem.
\end{abstract}

\tableofcontents

\section*{Introduction}
The bijection between minimal factorizations of Coxeter elements and exceptional sequences of quiver representations was studied by Ingalls and Thomas in \cite{ingalls2009noncrossing}. In this paper, we will investigate a more general bijection between signed exceptional sequences and signed minimal factorizations.

Igusa and Todorov generalized the notion of exceptional sequences to signed exceptional sequences defined using a property of modules in the exceptional sequences called relative projectivity \cite{igusa2017signed}. They showed that signed exceptional sequences are bijective to ordered cluster-tilting objects which are known as ordered W-Catalan objects, i.e. enumerated by formula $\frac{(\prod_{i=1}^n(e_i+h+1))n!}{|W|}$

On the other hand, Josuat-Verg\`es and Biane considered a property of reflections in minimal factorizations of Coxeter elements which we call one-wayness in this paper \cite{biane2019noncrossing}\cite{MR3395490}. They obtained a refined enumeration formula for a $q$-statistic on one-way reflections in minimal factorizations of a Coxeter element. When $q=2$, the enumeration formula is also the ordered $W$-Catalan number. We refer to the $q=2$ case as signed minimal factorizations.

Therefore, we can see that relative projectivity of modules in exceptional sequences is related to one-wayness of reflections in minimal factorizations of Coxeter elements. We prove this bijection in this paper.

\begin{thm}[Theorem \ref{thm A}]
    Given a Coxeter element c and a minimal factorization $(r_1,\dots,r_n)$ where $r_i \in T$ and its corresponding exceptional sequences $(E_1,\dots,E_n)$.

    Then $r_i$ is one-way if and only if $E_i$ is relatively projective.
\end{thm}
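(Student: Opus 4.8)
The plan is to reduce both sides of the equivalence to a single intrinsic condition on the exceptional object $E_i$ sitting inside the perpendicular wide subcategory that it governs, and then to match that condition against the Biane--Josuat-Verg\`es notion of one-wayness through the Ingalls--Thomas dictionary. Throughout I write $\beta_i$ for the positive real Schur root with $r_i = s_{\beta_i}$ and $\dim E_i = \beta_i$. Setting $\mathcal{A}_i := {}^{\perp}\langle E_1,\dots,E_{i-1}\rangle$, the sequence $(E_i,\dots,E_n)$ is a complete exceptional sequence in the rank $(n-i+1)$ hereditary subcategory $\mathcal{A}_i$, with $E_i$ as its first term. On the group side the parallel object is the reflection subgroup $W_i := \langle r_i,\dots,r_n\rangle$ together with its Coxeter element $c_i := r_i r_{i+1}\cdots r_n$, for which $(r_i,\dots,r_n)$ is again a minimal factorization. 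The first step is to record that the Ingalls--Thomas bijection is compatible with this simultaneous passage to perpendicular category and to sub-factorization, so that $\mathcal{A}_i \leftrightarrow (W_i,c_i)$ and the question reduces to analyzing the first term and the first reflection only.

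Next I would unwind relative projectivity. By the Igusa--Todorov definition, $E_i$ is relatively projective exactly when it is Ext-projective in the relevant wide subcategory $\mathcal{A}_i$; since $\mathcal{A}_i$ is a hereditary abelian category this is the same as $E_i$ being a projective object of $\mathcal{A}_i$. As $\{E_i,\dots,E_n\}$ generates $\mathcal{A}_i$ and $(E_i,\dots,E_n)$ is exceptional (so $\operatorname{Hom}(E_j,E_i)=\operatorname{Ext}^1(E_j,E_i)=0$ for $j>i$ and $\operatorname{Ext}^1(E_i,E_i)=0$), projectivity of $E_i$ collapses to the single vanishing condition
\begin{equation*}
\operatorname{Ext}^1_{\mathcal{A}_i}(E_i,E_j)=0 \qquad \text{for all } j>i.
\end{equation*}
Thus relative projectivity of $E_i$ is equivalent to this explicit Ext-vanishing against all later terms of the sequence.

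On the combinatorial side I would rewrite one-wayness of $r_i$ as a statement about how $r_i$ interacts with the later reflections $r_{i+1},\dots,r_n$: concretely, that $r_i$ peels off $c_i$ as a left factor in a single-directional way, which in the folded chord diagram is the assertion that the chord of $r_i$ is extremal and unobstructed with respect to the chords of the later reflections. The bridge is the Ingalls--Thomas dictionary translating the $\operatorname{Hom}$/$\operatorname{Ext}$ pattern between $E_i$ and $E_j$ into the relative position (nesting versus crossing, equivalently the sign of the pairing of $\beta_i$ and $\beta_j$) of the reflections $r_i,r_j$; under this dictionary the vanishing $\operatorname{Ext}^1(E_i,E_j)=0$ for every $j>i$ becomes precisely the one-directional compatibility of $r_i$ with every later chord, that is, one-wayness. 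Carrying this out term by term, and invoking the perpendicular/sub-factorization compatibility of the first step so that it suffices to treat the first term, yields the equivalence.

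The main obstacle is the faithful matching of definitions at the two ends of the dictionary. First, I must prove that the Ingalls--Thomas correspondence genuinely intertwines the perpendicular filtration $\mathcal{A}_1\supseteq\cdots\supseteq\mathcal{A}_n$ with the noncrossing recursion $(W_i,c_i)$, including that $c_i$ is the Coxeter element matching the orientation of $\mathcal{A}_i$; the conventions (left versus right perpendicular, and which end of the exceptional sequence is read as projective) must be pinned down consistently, since a reversal would swap \emph{projective} with \emph{injective} and break the equivalence. Second, and hardest, I must show that the Biane--Josuat-Verg\`es combinatorial definition of one-wayness coincides on the nose with the Ext-vanishing $\operatorname{Ext}^1(E_i,E_j)=0$ for all $j>i$; this is where the folded chord diagram and reverse Garside element machinery developed earlier should do the real work, furnishing the precise translation between the order-theoretic chord data of the factorization and the homological data of the exceptional sequence. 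Everything else is bookkeeping once these two compatibilities are in place.
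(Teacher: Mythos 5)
Your proposal founders on the very first identification: you test relative projectivity in the wrong perpendicular category. The Igusa--Todorov definition used by the paper makes $E_i$ relatively projective when it is projective in $\{E_{i+1},\dots,E_n\}^{\perp}$, the \emph{right} perpendicular of the \emph{later} terms, in which $(E_1,\dots,E_i)$ is a complete exceptional sequence with $E_i$ as its \emph{last} member. You instead work in $\mathcal{A}_i={}^{\perp}\langle E_1,\dots,E_{i-1}\rangle$, where $E_i$ is the \emph{first} member; these categories genuinely differ (for $i=n$ yours has rank one, while the correct one is all of mod-$\Lambda$), and with your reading the theorem is false. Concretely, for the quiver $1\to 2$ the exceptional sequence $(S_1,S_2)$ corresponds to the factorization $((12),(23))$ of $c=(123)$: here $r_1=(12)$ is one-way (every reflection covers $e$ one-way, since $l(e)<l(r_1)$), and $S_1$ \emph{is} relatively projective because $\{E_2\}^{\perp}$ has rank one and is semisimple; yet $S_1$ is not projective in your $\mathcal{A}_1=\text{mod-}\Lambda$, and $\operatorname{Ext}^1(S_1,S_2)\neq 0$, so both your reformulation and your ``collapsed'' Ext-vanishing criterion give the wrong answer. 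Note also that your reduction, were it legitimate, would trivialize both sides: the first term of a complete exceptional sequence is always relatively projective and the first reflection of a minimal factorization is always one-way, so nothing about the index $i$ would remain to prove. Finally, the collapse itself is false as a piece of homological algebra: $\operatorname{Ext}^1$-vanishing against a generating family of exceptional objects does not imply projectivity, because general objects are not filtered by those generators; e.g.\ for $1\to 2\to 3$ the sequence $(S_2,M_{13},M_{12})$ (interval-module notation) is complete exceptional with $\operatorname{Ext}^1(S_2,M_{13})=\operatorname{Ext}^1(S_2,M_{12})=0$, yet $S_2$ is not projective.

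The second half of your plan --- that one-wayness of $r_i$ matches your Ext-vanishing condition via chord-crossing combinatorics and ``the reverse Garside machinery'' --- is only asserted, and that machinery is developed in the paper \emph{after} Theorem \ref{thm A} precisely because it plays no role in its proof. The paper's actual argument is short and runs through different ingredients: (a) a lemma, proved with tops, socles and the Auslander--Reiten translate, that $E_i$ is relatively projective iff all but one of the simple objects of $\{E_{i+1},\dots,E_n\}^{\perp}$ lie in $\{E_i,\dots,E_n\}^{\perp}$; (b) the Gabriel-type dictionary matching these wide subcategories with the parabolic subgroups attached to the factorization, under which simple objects correspond to simple roots; and (c) the cited result of Biane and Josuat-Verg\`es that a cover in absolute order is one-way iff it is an \emph{interval refinement}, i.e.\ iff the simple roots of the smaller parabolic are contained in those of the larger. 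Your proposal never touches interval refinement, which is the indispensable bridge. To repair it you would have to place $E_i$ as the last term of $(E_1,\dots,E_i)$ inside $\{E_{i+1},\dots,E_n\}^{\perp}$, work with the cover $r_1\cdots r_{i-1}<_a r_1\cdots r_i$, and then supply exactly the ingredients (a)--(c) above.
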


The other focus of this paper is on generalized Goulden-Yong duals.  Goulden and Yong constructed a bijection between the set of minimal factorizations in the symmetric group $S_n$ and the set of trees on $n$ vertices in \cite{MR1897927}. The bijection is constructed by taking the planar dual of the chord diagram and pushing out edge labels from a vertex in the planar dual. 

In order to construct Goulden-Yong duals for type B and type D reflection groups, we need a general notion of planar duals. This is done algebraically via a variation of the braid group actions on minimal factorizations.

Braid group actions on exceptional sequences were first considered by Crawley-Boevey and he showed that the action is transitive on the set of complete exceptional sequences for acyclic quivers \cite{MR1265279}. Ringel extended this to all hereditary artin algebras \cite{ringel1994braid}. The braid group action $\sigma_i$ on a minimal factorization $(r_1,\dots,r_n)$ is given by conjugating $r_{i+1}$ by $r_i$ and permuting the resulting reflection with $r_i$ in the factorization.

In this paper, we will consider a reverse braid group action where we only conjugate $r_{i+1}$ by $r_i$ but don't permute the resulting reflection with $r_i$. 

The Garside element $\Delta$ is a special element in braid groups. We will focus on the reverse Garside element actions and prove two results about the reverse Garside element actions in this paper.

First we show that the reverse Garside element action will preserve one-wayness/relative projectivity.

\begin{thm}[Corollary \ref{thm B}]
    Given an exceptional sequence $(E_1,\dots,E_n)$ and its reverse Garside dual $(E_1',\dots,E_n')$, $E_i$ is relatively projective if and only if $E_i'$ is relatively projective.

    In particular, the reverse Garside element action $\Delta'$ gives a bijection between signed exceptional sequences of opposite categories that preserves relative projectivity. 
\end{thm}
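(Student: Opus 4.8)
The plan is to reduce the statement about relative projectivity to one about one-wayness via the equivalence of Theorem~\ref{thm A}, and then to show that the reverse Garside action $\Delta'$ preserves one-wayness at the combinatorial level. Concretely, let $(r_1,\dots,r_n)$ be the minimal factorization of the Coxeter element $c$ corresponding to $(E_1,\dots,E_n)$, and let $(r_1',\dots,r_n')$ be the factorization corresponding to the reverse Garside dual $(E_1',\dots,E_n')$. Theorem~\ref{thm A} gives that $E_i$ is relatively projective if and only if $r_i$ is one-way, so it suffices to prove that $r_i$ is one-way if and only if $r_i'$ is one-way, provided we also check that $(r_1',\dots,r_n')$ is itself a minimal factorization of a Coxeter element --- namely one governing the opposite category --- so that Theorem~\ref{thm A} is available on the primed side as well.

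First I would pin down how $\Delta'$ acts on factorizations. Since $\Delta'$ is a composition of the reverse braid moves $\sigma_i'$ (each conjugating $r_{i+1}$ by $r_i$ without permuting), I would track the product $r_1'\cdots r_n'$ and show that it is again a Coxeter element, the one governing the opposite category; this confirms that the dual factorization is minimal and that Theorem~\ref{thm A} applies to it. The cleanest way to carry one-wayness across is through the folded chord diagram: I would argue that $\Delta'$ realizes a geometric symmetry of the folded chord diagram --- an orientation reversal of the boundary --- under which a chord is one-way if and only if its image is one-way. This lets me avoid tracking one-wayness through the individual moves $\sigma_i'$, where it need not be preserved step by step, and instead read the invariance off from the global picture.

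With both reductions in place the corollary follows by chaining equivalences: $E_i$ relatively projective $\iff$ $r_i$ one-way $\iff$ $r_i'$ one-way $\iff$ $E_i'$ relatively projective. For the ``in particular'' statement I would note that $\Delta'$, being built from invertible braid moves, is a bijection on complete exceptional sequences, and that the computation of $r_1'\cdots r_n'$ identifies its target as the exceptional sequences of the opposite category. Since a signed exceptional sequence is an ordinary exceptional sequence together with a choice of sign on each relatively projective term, and $\Delta'$ matches relatively projective positions on the two sides, it lifts to a bijection of signed exceptional sequences that preserves the relative-projectivity data.

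The main obstacle I anticipate is the middle step, proving that one-wayness is preserved. The reverse braid moves do not obviously preserve one-wayness one at a time, and the intermediate factorizations need not be factorizations of any Coxeter element, so Theorem~\ref{thm A} cannot be invoked internally. I therefore expect the crux to be an honest combinatorial argument with the folded chord diagram, identifying $\Delta'$ with an orientation-reversing symmetry and checking directly that one-wayness is invariant under it; verifying that this symmetry lands in the opposite category, rather than returning to the original one, is the subtle point that makes ``opposite categories'' appear in the statement.
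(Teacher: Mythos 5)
Your overall architecture inverts the paper's logic, and the inversion leaves the central claim unproven. The paper proves this theorem directly and categorically: writing $\Delta'(E_1,\dots,E_n)=(E_1',\dots,E_n')$, one has $\Delta(E_1,\dots,E_n)=(E_n',\dots,E_1')$, and the Garside element action sends the relatively projective $E_i$ (projective in $\mathcal{C}=\{E_{i+1},\dots,E_n\}^\perp$) to an object $E_i'$ that is injective in $\mathcal{C}$, hence projective in $\mathcal{C}^{op}$, which is the perpendicular category $\{E_{i+1}',\dots,E_n'\}^\perp$ of the dual sequence in the opposite category; the converse follows from $(\Delta')^2=id$. Only afterwards does the paper deduce the one-wayness statement (Corollary \ref{thm B}) via Theorem \ref{thm A}, and then, combining with Theorem \ref{thm C}, the preservation of edge colors under planar duality. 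Your plan runs these implications backwards: reducing relative projectivity to one-wayness by Theorem \ref{thm A} is fine, but the step ``$r_i$ is one-way iff $r_i'$ is one-way'' \emph{is} Corollary \ref{thm B}, i.e., the theorem itself in combinatorial guise, and you never supply an independent proof of it. The mechanism you gesture at --- that $\Delta'$ is an orientation reversal of the folded chord diagram under which one-wayness is visibly invariant --- is both unsubstantiated and not quite what $\Delta'$ does: by Theorem \ref{thm C}, $\Delta'$ realizes planar duality (regions become vertices), and the invariance of the one-way/two-way coloring under that duality is precisely the content whose only proof in the paper passes through the categorical argument. To make your route work you would need a from-scratch combinatorial argument, say via the interval-refinement characterization of one-wayness of Josuat-Verg\`es used in the proof of Theorem \ref{thm A}, showing that the dual chord is one-way whenever the original is; nothing of the sort is sketched, and it is not an easy check.

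There is also a scope problem. Chord diagrams and folded chord diagrams are defined in the paper only for types $A$, $B$, $D$, while the theorem is stated for arbitrary exceptional sequences, with no restriction on type. Even a complete chord-diagram argument would leave the remaining types unproven, whereas the paper's proof (projectives go to injectives under $\Delta$, and injectives in $\mathcal{C}$ are projectives in $\mathcal{C}^{op}$) is uniform and type-free. Your closing paragraph for the ``in particular'' clause --- that $\Delta'$ is invertible, lands in the opposite category by the computation $r_1'\cdots r_n'=c^{-1}$, and matches relatively projective positions, hence lifts to signed exceptional sequences --- is correct in outline, but it rests entirely on the unproven middle step.
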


Then we show that the algebraic operation of taking the reverse Garside dual is equivalent to taking the planar dual on the chord diagram corresponding to the minimal factorizations.

\begin{thm}[Theorem \ref{thm C}]
    Given a Coxeter element c and a minimal factorization $(r_1,\dots,r_n)$, two regions $reg(e_i)$ and $reg(e_j)$ are adjacent in the planar dual if and only if there's an $r_k'$ in the reverse Garside dual such that $r_k'(e_i)=e_j$
\end{thm}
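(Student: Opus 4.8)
The plan is to interpret every reflection $r_k'$ of the reverse Garside dual as a \emph{wall-crossing reflection} of the chord diagram, after which the asserted equivalence becomes the near-tautology that two regions are adjacent iff they share a wall and the reflection across that wall is one of the $r_k'$. First I would make the reverse Garside dual explicit. Writing $\Delta'$ as the composite of the reverse braid moves $\sigma_i'$ — each of which replaces $r_{i+1}$ by $r_i r_{i+1} r_i^{-1}$ while leaving $r_i$ in place — and expanding along the reduced word for the longest element, one obtains each $r_k'$ as a conjugate of an original reflection by a prefix of the factorization. The key lemma I would then isolate is: under the labeling of regions by roots $reg(e_i)$, these conjugates are exactly the reflections whose (folded) chords are dual to the original chords, in the sense that if a chord separates $reg(e_i)$ from $reg(e_j)$, then the corresponding $r_k'$ is the reflection exchanging $e_i$ and $e_j$. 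I expect to prove this by tracking, through the reduced word defining $\Delta'$, how each reverse braid move relabels the region on one side of a chord, so that at the end of the word the accumulated conjugation is precisely the reflection carrying one region-root across that chord to the other.

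With the key lemma in hand, both directions are short. If $reg(e_i)$ and $reg(e_j)$ are adjacent they share a chord $k$, and the lemma produces an $r_k'$ with $r_k'(e_i)=e_j$. Conversely, if some $r_k'$ satisfies $r_k'(e_i)=e_j$ with $e_i\neq e_j$, then the fixed hyperplane of $r_k'$ is forced to be orthogonal to $e_i-e_j$, i.e. the perpendicular bisector of $e_i$ and $e_j$; this hyperplane is realized by a single chord of the diagram, and that chord is then the common boundary of $reg(e_i)$ and $reg(e_j)$, so the two regions are adjacent. Thus the formal content reduces entirely to the wall-crossing lemma.

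I expect the main obstacle to be the folded case, which is exactly where the theorem goes beyond the classical type $A$ computation of Goulden and Yong. After folding, a reflection may correspond either to a symmetric pair of chords (a long root) or to a self-symmetric chord through the center (a short root), and regions likewise appear in symmetric pairs or as self-symmetric regions. The wall-crossing lemma must then be checked separately according to root length, and one must verify that the reverse braid moves respect the $\mathbb{Z}/2$ symmetry so that the accumulated conjugation descends to the folded diagram and still realizes the root exchange between the corresponding folded regions. Establishing this compatibility — rather than the essentially formal two-direction argument above — is where the real work lies.
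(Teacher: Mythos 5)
Your core strategy coincides with the paper's. The paper also reduces Theorem C to the wall-crossing statement --- each $r_k'=r_1\cdots r_{k-1}r_k r_{k-1}^{-1}\cdots r_1^{-1}$ is precisely the reflection exchanging the labels of the two regions flanking the chord(s) of $r_k$ --- and also proves it by induction along the factorization; its bookkeeping device is the \emph{trail} of $r_i$ (the subset of the prefix that conjugates $r_i$ nontrivially), which traces out exactly the path of chords bounding the two regions on either side of the chord of $r_i$. So your key lemma, and your plan to prove it by tracking relabelings along the word for $\Delta'$, is the paper's proof in different clothing.

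Two points in your write-up are off, though both are repairable. First, the converse as you state it would fail: you claim the fixed hyperplane of $r_k'$ ``is realized by a single chord of the diagram,'' but the chords of the diagram realize the \emph{original} reflections $r_1,\dots,r_n$, not the dual ones. In the paper's running example $\Delta'((12),(24),(23))=((12),(14),(34))$, the dual reflection $r_2'=(14)$ corresponds to no chord of the diagram (whose chords are those of $(12)$, $(24)$, $(23)$); what realizes $r_2'$ is the dual edge crossing the chord of $(24)$. The correct converse needs no such geometric claim: by the strong form of your own key lemma, $r_k'$ \emph{is} the reflection swapping the two labels flanking the chord of $r_k$, so any identity $r_k'(e_i)=e_j$ with $e_i\neq e_j$ forces $\{e_i,e_j\}$ to be such a flanking pair, hence the regions are adjacent. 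Second, your ``main obstacle'' is aimed at the wrong target: Theorem C concerns the unfolded chord diagrams of Section 2, and folded diagrams enter only in Section 4. The genuine subtlety you are circling --- that in types $B$ and $D$ a single reflection owns a symmetric pair of chords (long roots) or a diameter (short roots), so the wall-crossing lemma must be verified chord by chord and the prefix conjugation checked to respect the antipodal symmetry --- is already present in the unfolded $B$/$D$ diagrams; it is fair to note that the paper's own induction is silent on this point as well.
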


Therefore, we can use the reverse Garside dual to construct planar duals in general for any finite reflection group.

The other ingredient for our construction of generalized Goulden-Yong duals is a variation of the chord diagrams for type B and type D, called folded chord diagrams. The folded version serves our purpose better than the usual chord diagrams for type B and type D because it shares some properties with the type A chord diagrams. In particular, type B and type D folded chord diagrams have $n$ points on the circles and the edge labels from each vertex can be drawn clockwise decreasing to form noncrossing graphs within the circles.

With these two ingredients, we are able to construct the type B and type D Goulden-Yong duals from the planar duals in the folded chord diagrams. Similar to Goulden-Yong duals, they also provide a bijection between minimal factorizations and graphs with certain properties.

\begin{thm}[Theorem \ref{thm D}]
    Type B Goulden-Yong dual gives a bijection between the set of minimal factorizations of a Coxeter element in $B_n$ and the set $BT_n$, the set of rooted trees on n vertices with a loop attached to one of the vertices, such that a reflection $r_i$ in the factorization corresponds to a vertex $i$ in the rooted tree.
\end{thm}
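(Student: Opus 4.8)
The plan is to imitate the type $A$ construction of Goulden and Yong, using the folded chord diagram in place of the ordinary chord diagram and using the reverse Garside dual, which by Theorem \ref{thm C} computes exactly the region-adjacencies of the planar dual. Given a minimal factorization $(r_1,\dots,r_n)$ of a Coxeter element $c$ in $B_n$, I would first draw its folded chord diagram, with the $n$ labeled chords drawn noncrossing as recalled in the introduction. By Theorem \ref{thm C} the planar dual has one region $reg(e_i)$ for each of the $n$ folded points $e_1,\dots,e_n$, and $reg(e_i)$ and $reg(e_j)$ are adjacent precisely when some reverse Garside dual reflection $r_k'$ satisfies $r_k'(e_i)=e_j$, the connecting edge then carrying the label $k$. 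Pushing the labels out from a chosen base region exactly as in type $A$ turns this dual into a graph on the $n$ vertices $e_1,\dots,e_n$ with $n$ labeled edges and a distinguished root, and the assignment sending each label to the vertex it is pushed onto realizes the correspondence $r_i\leftrightarrow$ vertex $i$.

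It then remains to identify this graph as an element of $BT_n$. The key structural input is that the dual reflections split into two kinds: a signed transposition $r_k'$ has $r_k'(e_i)=\pm e_j$ with $i\neq j$ and contributes an ordinary edge between the distinct vertices $reg(e_i)$ and $reg(e_j)$, whereas a sign change has $r_k'(e_i)=-e_i$, which under the folding identification $-e_i\sim e_i$ contributes a loop at the single vertex $reg(e_i)$. Thus loops in the dual correspond exactly to sign-change reflections in the reverse Garside dual. I would prove that there is exactly one such sign change, so that the dual has exactly one loop, and that the remaining $n-1$ signed-transposition edges connect the $n$ vertices into a spanning tree; together with the root produced by the pushing-out step this is precisely a rooted tree on $n$ vertices with a loop attached, i.e. an element of $BT_n$.

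The main obstacle is this counting of sign-change reflections, which is the feature with no type $A$ analogue. Here I would use the type $B$ noncrossing partition picture: a maximal chain from the minimal to the maximal noncrossing partition creates the unique self-paired zero block at exactly one step, and that step is a sign change, while every other step merges two centrally symmetric pairs of blocks and is a signed transposition. Hence any minimal factorization of $c$, and by Theorem \ref{thm C} its reverse Garside dual, contains exactly one sign change; the noncrossing property simultaneously forces the $n-1$ transposition edges to be distinct and acyclic, hence to form a spanning tree once connectivity is checked from the transitivity of the factorization.

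Finally, to upgrade this to a bijection I would construct the inverse, reconstructing a folded chord diagram from a given rooted tree with a loop by reversing the pushing-out and reading off, via Theorem \ref{thm C}, the reverse Garside dual and hence the factorization. As a numerical consistency check, the number of minimal factorizations of a Coxeter element of $B_n$ equals $\frac{n!(2n)^n}{2^n n!}=n^n$, which matches $|BT_n|=n\cdot n^{n-1}$, the number of rooted trees on $n$ vertices times the choice of loop vertex; so it would already suffice to prove that the forward map is injective.
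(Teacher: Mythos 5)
Your proposal is correct in substance and follows the same route as the paper's proof: folded chord diagram, dual computed by the reverse Garside element, edge labels pushed onto vertices, and bijectivity closed by an explicit inverse that reverses the pushing. The differences are in the supporting steps. You derive the ``spanning tree plus exactly one loop'' structure from the type $B$ noncrossing partition lattice (unique creation of the zero block along a maximal chain); the paper gets the same fact more cheaply in the propositions preceding the theorem: a minimal factorization in $B_n$ contains exactly one sign change $e_i$ (fold to $S_n$: sign changes project to the identity, the image of the long cycle already needs $n-1$ transpositions, so at most one factor can die, and parity rules out zero), and the remaining $n-1$ reflections fold to a minimal factorization in $A_{n-1}$, hence form a tree by the type $A$ theory. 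Because those facts are already in place, the paper's proof of Theorem \ref{thm D} consists only of the inverse construction: label the loop by its vertex, slide each vertex label back onto the edge toward the loop vertex, and set the root to $1$; uniqueness of the resulting folded dual then comes from the type $A$ Goulden--Yong bijection. Your lattice argument is valid but heavier; your counting fallback ($n^n$ factorizations versus $|BT_n| = n\cdot n^{n-1}$) is also valid but imports Deligne's formula, which this bijection (composed with the type $B$ Pr\"{u}fer code of Theorem \ref{thm F}) is partly designed to reprove, so the explicit inverse is the preferable closing step and is what the paper uses.

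Two details in your write-up need to be pinned down. First, the labels must be pushed away from the loop-adjacent vertex, not ``from a chosen base region exactly as in type $A$'' if that base is the root: with $n$ vertices and $n$ edges, pushing away from the root leaves the root unlabeled and deposits two labels (parent edge and loop) on the loop vertex, breaking the stated correspondence $r_i \leftrightarrow$ vertex $i$; pushing from the loop vertex gives every vertex exactly one label, which is the paper's definition. Second, in the noncrossing partition argument, covers lying above an already-created zero block (merging a centrally symmetric pair $\pm B$ into it) are labelled by signed transpositions as well, so the precise claim is that the unique cover creating the zero block is the unique sign change; your phrasing ``every other step merges two centrally symmetric pairs of blocks'' omits this type of cover, though the conclusion is unaffected.
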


\begin{thm}[Theorem \ref{thm E}]
    Type D Goulden-Yong duals give a 2-to-1 correspondence between the set of minimal factorizations of a Coxeter element in $D_n$ and the set $DT_n$, the set of rooted simple spanning graphs on $n$ vertices with n edges where the root cannot be the smallest vertex in the cycle, such that a reflection $r_i$ in the factorization corresponds to a vertex $i$ in the rooted tree.
\end{thm}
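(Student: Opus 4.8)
The plan is to mimic the type A construction of Goulden--Yong \cite{MR1897927} and the type B construction of Theorem \ref{thm D}, replacing the ordinary chord diagram by the type D folded chord diagram and the classical planar dual by the reverse Garside dual. Concretely, given a minimal factorization $(r_1,\dots,r_n)$ of a Coxeter element $c$ in $D_n$, I would first draw its folded chord diagram on the $n$ points of the circle and then apply the reverse Garside element action $\Delta'$. By Theorem \ref{thm C} the reflections $r_k'$ of the reverse Garside dual record exactly which regions $reg(e_i),reg(e_j)$ of the planar dual are adjacent, so reading off these adjacencies and pushing the reflection labels out onto the regions (as in the type A and type B cases) produces a graph whose vertices are indexed by $1,\dots,n$, with vertex $i$ carrying the reflection $r_i$. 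Since there are $n$ reflections, the resulting graph has $n$ vertices and $n$ edges.

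First I would identify the image. Because the factorization is reduced, its reflections generate $D_n$ and act transitively on the folded points, which forces the dual graph to be connected; hence it is spanning, and it is simple because distinct adjacent region pairs give distinct edges. A connected graph on $n$ vertices with $n$ edges has cyclomatic number $n-n+1=1$, so it contains a unique cycle, and this is the feature distinguishing type D from type A. The contrast with type B is the crucial structural point: in $B_n$ the distinguished generator is a sign change, which folds to a \emph{loop} and supplies the extra edge of Theorem \ref{thm D}; in $D_n$ there are no sign changes, and the two branch generators instead force the extra edge to close up into a genuine cycle running through the centre of the folded diagram. I would then locate the root as the distinguished region attached to the Garside basepoint and show it cannot be the smallest vertex lying on this central cycle.

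The 2-to-1 statement is where the real work lies. Folding identifies $i$ with $\bar\imath$, so the two type D reflections supported on a pair $\{i,j\}$, namely $t_{ij}$ and $t_{i\bar{j}}$, fold to the same chord; the natural candidate for the two-fold symmetry is therefore the outer diagram automorphism $\tau$ of $D_n$ (conjugation by the type B sign change that normalises $D_n$), which fixes every folded chord diagram. The plan is to show that $\tau$ carries minimal factorizations to minimal factorizations, that a factorization and its image have the same reverse Garside dual graph, and that $\tau$ acts \emph{freely} on the set of minimal factorizations; the orbits then all have size two and the dual-graph map descends to a bijection onto $DT_n$. The even-sign-change constraint defining $D_n$ inside $B_n$ is what pins the two lifts of a given graph together at a single place on the cycle, and comparing against the smallest vertex of the cycle is the canonical way to break this ambiguity; this is exactly why the root is forbidden from being the smallest cycle vertex and why the map lands on $DT_n$ rather than on all rooted unicyclic graphs. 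As a consistency check I would confirm that $|DT_n|=(n-1)^n$, half of the number $2(n-1)^n=\frac{n!\,h^n}{|W|}$ of reduced reflection factorizations of a Coxeter element of $D_n$.

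The main obstacle I expect is proving freeness of the $\tau$-action together with exact surjectivity onto $DT_n$: I must rule out factorizations fixed by $\tau$, which would create fibres of size one and destroy the clean 2-to-1 count, while simultaneously showing that every rooted unicyclic graph satisfying the root condition is realised, with the central cycle correctly matched to the two sign resolutions. Controlling how the reverse Garside action interacts with the orientation of the central cycle, and checking that the smallest-vertex normalisation is compatible with the Garside basepoint, is the delicate step, and the type A and type B arguments give no direct guidance here because neither produces a cycle.
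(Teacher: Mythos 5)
Your setup (folded chord diagram, reverse Garside dual, pushing labels outward) and your identification of the source of the factor of two both track the paper: your outer automorphism $\tau$ (conjugation by the sign change at the short-cycle index) is exactly the group-theoretic restatement of the paper's lemma that each type D folded chord diagram corresponds to precisely two minimal factorizations, since the two lifts differ by flipping the signs of all chords incident to the vertex $1$. Freeness of this action, which you flag as a major obstacle, is in fact immediate: $D_n$ has no reflections of the form $e_i$, so any minimal factorization of $c$ must contain a reflection $e_1\pm e_j$ (because $c(e_1)=-e_1$), and $\tau$ moves every such reflection. So the 2-to-1 half of your plan is sound and is essentially the paper's argument in different clothing.

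The genuine gap is the other half --- the bijection between folded duals and $DT_n$ --- which you explicitly defer (``the main obstacle\dots the type A and type B arguments give no direct guidance''). The paper resolves it with a concrete relabeling rule that your proposal never formulates: the root is the dual vertex $2$; every cycle vertex other than $1$ takes the \emph{larger} of its two incident cycle edge labels, vertex $1$ takes the \emph{smaller}, and labels of edges outside the cycle are pushed away from the cycle. The root condition then falls out of this rule: the root receives the maximum of its two incident cycle labels, so it can never carry the smallest label on the cycle. It has nothing to do with the sign ambiguity, contrary to your claim that the even-sign-change constraint ``is exactly why the root is forbidden from being the smallest cycle vertex''; in the paper the sign ambiguity contributes only the factor of two, and the root condition is a separate combinatorial consequence of how labels travel around the cycle. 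The paper's injectivity/surjectivity argument is likewise concrete: the inverse map (orient the cycle increasingly starting from its smallest vertex, slide vertex labels back onto cycle edges, slide outside-cycle labels back along paths, relabel the root by $2$) recovers all edge labels plus one vertex label; an edge-labeled folded dual is determined only up to the $n-1$ rotations of the outer circle, and the one recovered vertex label pins down the rotation. Without this relabeling scheme, its inverse, and the rotation-orbit count, your proposal establishes neither that the map lands in $DT_n$ nor that it hits all of it, so the core of the theorem remains unproved.
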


We also record two applications of the generalized Goulden-Yong duals.

The first application is the construction of generalized Pr\"{u}fer codes for type B and type D. Pr\"{u}fer codes give a bijection between the set of trees on $n$ vertices and a sequence of $n-2$ elements where each element is in $[n]$. This provides a bijective proof for Cayley's Formula on the number of trees on $n$ vertices. Composed with Goulden-Yong dual, we obtain a bijective proof for the number of minimal factorizations in $A_n$.

We have the following version of bijections given by Pr\"{u}fer codes for type $B$ and type $D$.

\begin{thm}[Theorem \ref{thm F}]
    Type $B$ Pr\"{u}fer code gives a bijection between $BT_n$, the set of rooted labeled trees with a loop on n vertices, and a set of sequences $S_B=\{(b_1,\dots,b_n) \ | \ b_i \in [n]\}$
\end{thm}

\begin{thm}[Theorem \ref{thm G}]
    Type $D$ Pr\"{u}fer code gives a bijection between $DT_n$, the set of  rooted simple spanning graphs on $n$ vertices with n edges where the root cannot be the smallest vertex in the cycle, and a set of sequences $S_D=\{(d_1,\dots,d_n) \ | \ d_i \in [n-1]\}$
\end{thm}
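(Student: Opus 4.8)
The plan is to imitate the classical Pr\"ufer algorithm and its type $B$ analogue (Theorem \ref{thm F}): strip the trees hanging off the cycle by iterated leaf removal, and then encode the rooted cycle in the remaining letters. Recall that a graph $G\in DT_n$ is connected with $n$ vertices and $n$ edges, so it has a unique cycle $C$; let $m$ be the smallest vertex of $C$ and $\rho$ the root, so that $\rho\neq m$ by hypothesis, and note that every vertex off $C$ lies in a tree attached to $C$ at a unique point.

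First I would run the tree-stripping phase as in the classical code: repeatedly delete the smallest-labelled leaf and record its unique neighbour. Since no vertex of $C$ is ever a leaf, this deletes exactly the $n-|C|$ vertices off the cycle and terminates with $C$ still carrying its root, producing the first $n-|C|$ letters of the code. The remaining $|C|$ letters will encode the rooted cycle, so that the code has total length $n$.

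For the cycle phase the key device is the pair $(\rho,m)$. Reading $C$ from the basepoint $\rho$ and resolving the two traversal directions by comparing vertex labels gives a canonical linear order of the cycle, which I record. The hypothesis $\rho\neq m$ ensures that the distinguished minimum $m$ is not the basepoint, so that $m$ is determined by---rather than free in---the recorded data; this is the mechanism I expect to confine every letter to $[n-1]$ rather than $[n]$, matching the alphabet of $S_D$, and is the type $D$ counterpart of the fact that the type $B$ loop uses the full alphabet.

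To prove bijectivity I would exhibit the inverse explicitly: from a sequence in $S_D$ one recovers the vertex multiplicities, replays the leaf deletions in reverse to rebuild the trees, and reassembles the cyclic order from the terminal block of letters, closing it into $C$ and restoring the determined minimum $m$ and the root $\rho$. Checking that the two maps compose to the identity is then bookkeeping, consistent with the equality $|DT_n|=(n-1)^n=|S_D|$, which follows from Theorem \ref{thm E} together with the count $2(n-1)^n$ of minimal factorizations of a Coxeter element in $D_n$. The main obstacle is the cycle phase, not the tree phase: on the trees, leaf removal is literally the classical Pr\"ufer step and its invertibility is standard, whereas the cycle must be encoded \emph{together with its root} so that three conditions hold at once---exactly $|C|$ letters are produced, every letter lands in $[n-1]$, and the linearisation is canonical and reversible. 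Verifying that the single hypothesis ``$\rho\neq m$'' secures the last two of these is the crux, and is where I expect the real work to lie.
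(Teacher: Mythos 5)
Your route is genuinely different from the paper's --- you run Pr\"ufer leaf-deletion directly on the graph and then try to encode the rooted cycle, whereas the paper never deletes anything from the original graph: it first replaces the cycle by a star centred at its smallest vertex $m$ (the $(n-1)$-to-$1$ correspondence with rooted trees whose root is smaller than all its children), then applies Bernardi's bijection (Theorem \ref{Bernardi}) to force $v_n$ to be a leaf, and only then strips leaves. Unfortunately, as written your plan has gaps that are fatal. The first is that you assume the root $\rho$ lies on the cycle $C$: you use $\rho$ as a basepoint for reading $C$, and you assert the stripping phase ``terminates with $C$ still carrying its root.'' But in $DT_n$ the root may be \emph{any} vertex other than the smallest vertex of $C$ --- in particular an off-cycle vertex, even a pendant leaf. (This is forced by the counting in the paper's proof of Theorem \ref{thm E}: each unrooted graph must yield $n-1$ rooted copies.) When the root is an off-cycle leaf, your stripping phase deletes it and your cycle phase has no basepoint, so both halves of the scheme break.

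The second gap is the alphabet. The letters recorded during tree-stripping are parents of deleted leaves, and such a parent can be the vertex $n$, hence outside $[n-1]$; your mechanism ``$\rho \neq m$ determines $m$'' concerns only the cycle letters and cannot repair this. Concretely, in $DT_5$ take the triangle on $\{1,2,3\}$, attach $5$ to $3$ and $4$ to $5$, and root at $2$: the first stripping step deletes the leaf $4$ and records its neighbour $5 \notin [4]$. This is exactly the difficulty the paper's construction is engineered around: after Bernardi's bijection, $v_n$ is a leaf and is deleted \emph{first} (its parent is $d_2$), so $n$ has vanished before any further letter is written and every subsequent parent automatically lies in $[n-1]$; the root is handled by the single letter $d_1$ ranging over the $n-1$ admissible choices (strictly, $d_1$ must be read as the index of the root among those $n-1$ vertices, since the root's raw label can itself be $n$). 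Finally, your cycle phase is only a sketch: ``resolving the two traversal directions by comparing vertex labels'' is not a defined encoding, and you yourself defer its verification as where the real work lies. The equality $|DT_n| = (n-1)^n = |S_D|$ cannot substitute for that work --- it shows a bijection exists, not that your particular map is well defined into $[n-1]^n$, injective, or surjective. So the proposal is a plan whose two structural assumptions (root on the cycle, letters in $[n-1]$) fail and whose central encoding step is missing.
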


On the other hand, using results by Igusa and Sen that characterize one-way reflections in minimal factorization in $A_n$, we are able to realize the number of minimal factorizations in $A_n$ as the determinant of a minor of a graph Laplacian. Moreover, we can realize the number of signed minimal factorizations in $A_n$ as the determinant of a minor of a weighted graph Laplacian.

\begin{cor}[Corollary \ref{thm H}]
    For the weighted Laplacian $L'$ of $G_{A_n}$, $det(L'_{n+1})=n!C_{n+1}$ where $C_n$ is the n-th Catalan number.
\end{cor}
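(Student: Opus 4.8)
The plan is to run the weighted matrix-tree theorem in one direction and the ordered $W$-Catalan enumeration in the other, meeting in the middle at the count of signed minimal factorizations. First I would record the unweighted calibration: the graph $G_{A_n}$ has $n+1$ vertices indexed by the cycle structure of the Coxeter element, and its unweighted Laplacian minor $\det(L_{n+1})$ equals $(n+1)^{n-1}$ by Cayley's formula, matching the D\'{e}nes count of minimal factorizations of an $(n+1)$-cycle; this is the unweighted statement quoted just before the corollary. For the weighted Laplacian $L'$, Kirchhoff's weighted matrix-tree theorem gives
\[
\det(L'_{n+1}) \;=\; \sum_{T}\ \prod_{e\in T} w(e),
\]
where $T$ ranges over the spanning trees of $G_{A_n}$ and $w(e)$ is the edge weight built into $L'$.

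The core step is to identify each product $\prod_{e\in T} w(e)$ with the sign-weight $2^{\#\{\,i\,:\,r_i\text{ one-way}\,\}}$ of the minimal factorization $(r_1,\dots,r_n)$ that the Goulden-Yong dual attaches to $T$. Here I would invoke the Igusa-Sen characterization of one-way reflections in $A_n$: relative to the rooting at the deleted vertex $n+1$, whether the reflection carried by an edge $e$ is one-way should be decided edge-locally, by the transposition together with the parent/child orientation that the rooted tree induces on $e$. Setting $w(e)=2$ exactly on the one-way edges and $w(e)=1$ elsewhere, the product collapses to $2^{\#\{\text{one-way}\}}$ for each $T$, and transporting the sum through the bijection yields
\[
\det(L'_{n+1}) \;=\; \sum_{(r_1,\dots,r_n)} 2^{\#\{\,i\,:\,r_i\text{ one-way}\,\}},
\]
which is the $q=2$ value of the Biane--Josuat-Verg\`{e}s statistic, namely the number of signed minimal factorizations in $A_n$.

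Finally, Theorem~A together with the Igusa-Todorov enumeration identifies this signed count with the ordered $W$-Catalan number $\tfrac{n!\prod_{i=1}^n(e_i+h+1)}{|W|}$. Specializing to type $A_n$ with $e_i=i$, $h=n+1$, and $|W|=(n+1)!$, one has $\prod_{i=1}^n(i+n+2)=(2n+2)!/(n+2)!$, so the count equals $\tfrac{n!\,(2n+2)!}{(n+2)!\,(n+1)!}$; comparing with $C_{n+1}=\tfrac{1}{n+2}\binom{2n+2}{n+1}$ shows this is exactly $n!\,C_{n+1}$. I expect the main obstacle to be the edge-local step: proving through Igusa-Sen that one-wayness of the reflection on an edge is a function of the edge and its tree orientation alone, since only then does a single fixed weighting make the matrix-tree sum agree term by term with the signed factorization count. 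A subsidiary technical point is the bookkeeping for the root vertex $n+1$, and, should the weights turn out to be orientation-sensitive, the replacement of the undirected Kirchhoff theorem by its directed arborescence form.
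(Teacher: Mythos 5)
Your proposal is correct and takes essentially the same route as the paper's (much terser) proof: apply the directed weighted matrix-tree theorem to $G_{A_n}$, use the Igusa--Sen characterization to identify each tree's weight product with $2^{\#\{i \,:\, r_i \text{ one-way}\}}$ so that $\det(L'_{n+1})$ counts signed minimal factorizations, and then evaluate that count as the ordered $W$-Catalan number, which in type $A_n$ equals $n!\,C_{n+1}$. The one refinement you flagged as conditional is in fact mandatory---the weights are orientation-sensitive, so the arborescence (directed) form of the matrix-tree theorem is needed---but that is precisely the version the paper states, so your argument goes through as written.
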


The paper is organized as follows. In Section 1, we will cover some basic theories of reflection groups and quiver representations as well as proving the equivalence between relative projectivity and one-wayness. In Section 2, we will review the constructions of chord diagrams and their planar duals. In Section 3, we will define the reverse braid group action and prove some results about the reverse Garside element action. In Section 4, we will construct the generalized Goulden-Yong duals and prove bijections with certain graphs. In Section 5, we will talk about two applications of generalized Goulden-Yong duals.

\section{Reflection groups and relative projectivity}
In this section, we will review the relative projectivity in exceptional sequences and one-wayness in minimal factorizations. We will prove a bijection between these two properties.
\subsection{Reflection groups}
In this section, we will review some of the basic theory of reflection groups

\begin{defi}
    A \textbf{reflection group} $W$ of rank $n$ is a finite subgroup of $GL(\mathbb{R}^n)$ generated by Euclidean reflections $t \in T$ where $T$ is the set of Euclidean reflections.
\end{defi}

For each reflection group, we will pick a Coxeter element in the group and consider its minimal factorizations.

\begin{defi}
    For a reflection group $W$ and a set of simple reflections $S \subset T$, \textbf{a Coxeter element $c$} is a product of all generators $s \in S$ in any fixed order.
\end{defi}

\begin{defi}
    Given a Coxeter element $c$, we say a sequence $(r_1,\dots,r_n)$ is a \textbf{minimal factorization of $c$} if $c=r_1\dots r_n$ and $n$ is the smallest number for such sequences.
\end{defi}

\subsection{Bijections between minimal factorizations and exceptional sequences}

In this section, we will review some bijections between reflection groups and quiver representations.

We start with the most fundamental bijection which is Gabriel's Theorem.

\begin{thm}
    The positive roots of the root system of a Dynkin diagram are in one-to-one correspondence with the indecomposable representations of a quiver on the Dynkin diagram.
\end{thm}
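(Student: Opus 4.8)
The plan is to prove the bijection by sending an indecomposable representation $M$ to its \emph{dimension vector} $\underline{\dim}M = (\dim M_i)_i \in \mathbb{Z}^n$ and showing this map lands in, and bijects onto, the positive roots. The central tool relating the representation theory to the root system is the \emph{Tits form} $q(x) = \sum_i x_i^2 - \sum_{a:i\to j} x_i x_j$, together with its homological meaning: for the associated Euler form one has $\langle \underline{\dim}M, \underline{\dim}N\rangle = \dim\operatorname{Hom}(M,N) - \dim\operatorname{Ext}^1(M,N)$, so that $q(\underline{\dim}M) = \dim\operatorname{End}(M) - \dim\operatorname{Ext}^1(M,M)$. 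First I would record the purely combinatorial fact that for a Dynkin diagram $q$ is positive definite, hence $q(d)\ge 1$ for every nonzero $d \in \mathbb{Z}^n$, and that the positive roots are exactly the positive integer vectors with $q(d)=1$.

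The main engine is the pair of Bernstein--Gelfand--Ponomarev reflection functors $C_i^{\pm}$ defined at a sink or source $i$ of the quiver. I would establish their two key properties: (i) on objects not isomorphic to the simple $S_i$ they restrict to a bijection between the indecomposables of $Q$ and those of the quiver $s_iQ$ with the arrows at $i$ reversed, and (ii) on dimension vectors they act by the simple reflection $s_i$ of the root lattice. Iterating these along an admissible (sink-to-source) ordering gives the Coxeter functor, whose effect on dimension vectors is multiplication by a Coxeter element $c$; since $c$ has finite order and the positive roots form a finite set, repeated application of the Coxeter functor sends any indecomposable to zero after finitely many steps.

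With this machinery in place I would prove the two directions. Given an indecomposable $M$, I track it under the Coxeter functor: at the last step before it dies it becomes a simple $S_i$ (dimension vector $\alpha_i$), so $\underline{\dim}M$ is obtained from $\alpha_i$ by a sequence of simple reflections and is therefore a positive root; in particular $q(\underline{\dim}M)=1$, which forces $\operatorname{End}(M)=k$ and $\operatorname{Ext}^1(M,M)=0$. Conversely, given a positive root $\beta$, I write $\beta = w(\alpha_i)$ in the Weyl group and apply the corresponding inverse reflection functors to $S_i$ to produce an indecomposable with dimension vector $\beta$. Finally, injectivity follows from the same reduction: two indecomposables with equal dimension vector are carried by the same sequence of reflection functors to the same simple, hence are isomorphic.

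The step I expect to be the main obstacle is verifying property (i) of the reflection functors --- that $C_i^{\pm}$ genuinely preserve indecomposability and are mutually inverse away from $S_i$. This is the technical heart: it requires analyzing the (co)kernel maps defining the functors and showing $C_i^- C_i^+ M \cong M$ for indecomposable $M \not\cong S_i$ at a sink $i$, which rests on the exactness properties of these functors and the local structure of $\operatorname{End}(M)$. Everything else --- positive definiteness of $q$, the action on dimension vectors, and the finiteness of the Coxeter iteration --- is then either a cited combinatorial fact or a bookkeeping argument.
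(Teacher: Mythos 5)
The paper offers no proof of this statement at all: it is Gabriel's theorem, recorded in Section 1.2 as classical background before the results of Shi and Ingalls--Thomas, so there is no internal argument to compare yours against and your proposal must be judged on its own. On those terms it is correct: what you outline is the standard Bernstein--Gelfand--Ponomarev proof, with the dimension-vector map, the homological interpretation of the Tits form, the reflection functors $C_i^{\pm}$ at sinks and sources, and the Coxeter-functor induction; and you correctly identify the genuine technical heart, namely that $C_i^- C_i^+ M \cong M$ for indecomposable $M \not\cong S_i$, which is what makes the reflection functors bijections on indecomposables away from the simples.

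One step of your sketch is asserted with the wrong justification, though it is repairable. You claim the Coxeter functor annihilates every indecomposable after finitely many steps ``since $c$ has finite order and the positive roots form a finite set.'' At that stage of the argument you do not yet know that $\underline{\dim}\,M$ is a root, so finiteness of the root system cannot be invoked; the correct argument is that in the Dynkin case $c$ has no nonzero fixed vector in the root lattice (equivalently $1+c+\cdots+c^{h-1}=0$ there), so if every iterate $c^k(\underline{\dim}\,M)$ were a positive vector, the sum over a full period would be a nonzero $c$-invariant vector, a contradiction. Relatedly, note that each reflection functor reverses the arrows at the chosen vertex, so the induction must be phrased over the changing quivers $s_{i_k}\cdots s_{i_1}Q$ along an admissible ordering, returning to the original orientation only after a full pass. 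With these repairs your proposal is a complete and standard route to the theorem --- a sensible thing to supply, since the paper itself gives none.
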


The following theorem by Shi establishes a relation between Coxeter elements and acyclic digraphs.

\begin{thm}[\cite{article}]
    Given a Coxeter system $(W,S,\Gamma)$ where $W$ is a Coxeter group and $S$ is a set of simple reflections in $W$ and $\Gamma$ is the corresponding Coxeter graph.

    There exists a bijection between the set $C(W)$ of Coxeter elements of $W$ and the set $C(\Gamma)$ of acyclic orientations of $\Gamma$
\end{thm}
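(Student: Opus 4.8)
The plan is to present both sides as quotients of the set of orderings of $S$. Write $S = \{s_1,\dots,s_n\}$ and call a permutation $\pi$ of $[n]$ an \emph{ordering}; to it I attach the word $c_\pi = s_{\pi(1)}\cdots s_{\pi(n)} \in W$ and the orientation $O_\pi$ of $\Gamma$ that points each edge $\{i,j\}$ as the arrow $i \to j$ exactly when $i$ precedes $j$ in $\pi$. Because $O_\pi$ is the restriction of a total order to the edge set, it has no directed cycles, so $\pi \mapsto O_\pi$ takes values in $C(\Gamma)$ and is onto (a topological sort of any acyclic orientation realizes it); and $\pi \mapsto c_\pi$ is onto $C(W)$ by the definition of Coxeter elements. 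Hence it suffices to prove that these two surjections have exactly the same fibers, since then the common quotient supplies a canonical bijection $C(W) \cong C(\Gamma)$.

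One inclusion is that $O_\pi = O_\tau$ implies $c_\pi = c_\tau$. If two orderings induce the same orientation, they are linear extensions of the common partial order generated by its arrows, and any two linear extensions of a finite poset are joined by a chain of transpositions of adjacent incomparable elements with every intermediate order still a linear extension. Incomparable generators $s_i, s_j$ are in particular non-adjacent in $\Gamma$ (an edge would make them comparable), so $m_{ij} = 2$ and $s_i s_j = s_j s_i$; each such transposition is therefore a commutation relation in $W$ and does not change the product. Thus $c_\pi = c_\tau$.

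The substantive inclusion is the reverse: $c_\pi = c_\tau$ implies $O_\pi = O_\tau$. I would first note that any word with pairwise distinct letters is reduced: assuming $w' = s_{i_2}\cdots s_{i_k}$ reduced, the root $(w')^{-1}(\alpha_{i_1}) = s_{i_k}\cdots s_{i_2}(\alpha_{i_1})$ still has coefficient $1$ on $\alpha_{i_1}$, since each $s_{i_j}$ with $i_j \neq i_1$ fixes that coefficient, so it is a positive root and $\ell(s_{i_1}w') = \ell(w')+1$. Hence $\ell(c) = n$, and since the support of a reduced expression is independent of the expression, every reduced word for $c$ uses each generator exactly once. By Tits's word theorem any two reduced expressions of $c$ are connected by braid moves; a move $\underbrace{s_is_js_i\cdots}_{m_{ij}} \to \underbrace{s_js_is_j\cdots}_{m_{ij}}$ with $m_{ij}\geq 3$ repeats a letter and so never applies among words with distinct letters, leaving only the commutation moves $s_is_j \to s_js_i$ with $m_{ij}=2$. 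Such moves swap non-adjacent vertices of $\Gamma$ and hence preserve $O_\pi$, giving $O_\pi = O_\tau$.

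The main obstacle is this reverse inclusion, and the delicate point inside it is the appeal to Tits's theorem: I must confirm that every intermediate word in the chain of braid moves is again a length-$n$ reduced expression of $c$, hence again uses each generator exactly once, so that a braid move of length $\geq 3$ can never legally be triggered. The reducedness of Coxeter words and the connectivity of linear extensions are standard, so once this bookkeeping is in place the proof is complete.
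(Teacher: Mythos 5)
Your proposal is correct. Note that the paper does not prove this statement at all: it is quoted from Shi's paper (the citation \cite{article}), so there is no internal proof to compare against. Your argument is the standard one for this result, and all the key steps hold up: the coefficient argument showing that a word in distinct simple reflections is reduced is valid (each $s_{i_j}$ with $i_j \neq i_1$ preserves the $\alpha_{i_1}$-coefficient, and a root with a positive coefficient is positive); the support of a reduced expression being independent of the expression gives that every reduced word for $c$ uses each generator exactly once; and the ``delicate point'' you flag is in fact automatic, since braid moves preserve both the group element and the word length, so every intermediate word in a Tits chain is again a length-$n$ reduced expression for $c$, hence has pairwise distinct letters, and only commutation moves can ever fire. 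Combined with the converse direction via connectivity of linear extensions under adjacent transpositions of incomparable (hence commuting) generators, the two surjections from orderings of $S$ have identical fibers, which yields the bijection $C(W) \cong C(\Gamma)$.
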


Finally, the following theorem by Ingalls and Thomas shows the correspondence between minimal factorizations in reflection groups and exceptional sequences in the corresponding quiver representations.

\begin{defi}
    A quiver representation $E$ is \textbf{exceptional} if $Ext(E_i,E_i)=0$ and $End(E_i)$ is a division algebra.
    An \textbf{exceptional sequence} is a sequence of quiver representations $(E_1,\dots,E_n)$ where $E_i$ is exceptional for each i and $Hom(E_j,E_i)=Ext(E_j,E_i)=0$ for $i<j$.
\end{defi}

\begin{thm}[\cite{ingalls2009noncrossing}]
    There's a bijection between the set of complete exceptional sequences in mod-$\Lambda$ of a given quiver and the set of minimal factorizations of the corresponding Coxeter element.
\end{thm}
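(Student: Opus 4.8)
The plan is to build an explicit map $\Phi$ from complete exceptional sequences to ordered factorizations, check that its image consists of minimal factorizations of $c$, and then prove bijectivity by exploiting the transitive braid/Hurwitz actions on the two sides together with a single compatible base point. First I would set up the correspondence on objects. By Gabriel's Theorem each indecomposable of $\Lambda$ has dimension vector a positive root, and an exceptional module $E$ is determined by $\underline{\dim}E$, which is a real Schur root; to such a root I attach the reflection $r_E := s_{\underline{\dim}E} \in T$ in the hyperplane orthogonal to it, taking the symmetric bilinear form in which the simple roots are orthonormalized. Since $s_\beta = s_{-\beta}$, only $\underline{\dim}E$ up to sign is needed, so $r_E$ is well defined, and I set $\Phi(E_1,\dots,E_n) = (r_{E_1},\dots,r_{E_n})$. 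Because real Schur roots are in bijection with exceptional modules, the assignment $E \mapsto r_E$ is injective, a fact I will reuse for injectivity of $\Phi$.

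Next I would fix a base point. Ordering the vertices of the quiver compatibly with its acyclic orientation, the simple modules $(S_{i_1},\dots,S_{i_n})$ form a complete exceptional sequence whose associated reflections are the corresponding simple reflections $s_{i_1},\dots,s_{i_n}$. By the definition of the Coxeter element, together with Shi's Theorem relating $c$ to the orientation, their ordered product is exactly $c$. Hence $\Phi$ carries this distinguished sequence to a genuine minimal factorization of $c$, which both shows the image is nonempty and fixes the correct normalization.

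The heart of the argument, and the step I expect to be the main obstacle, is to show that the braid (mutation) action on exceptional sequences intertwines with the Hurwitz action on factorizations under $\Phi$. For an exceptional pair $(E,F)$ the mutated module has dimension vector $s_{\underline{\dim}E}(\underline{\dim}F)$ up to sign; using the relation between the (nonsymmetric) Euler form and the symmetric form $(\alpha,\beta) = \langle\alpha,\beta\rangle + \langle\beta,\alpha\rangle$, I would verify that the reflection attached to the mutated module is precisely $r_E\, r_F\, r_E$, the conjugate of $r_F$ by the involution $r_E$. This is exactly the Hurwitz move $(r_E,r_F) \mapsto (r_E r_F r_E,\, r_E)$, which preserves the product $r_E r_F$. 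Checking that the sign ambiguity never obstructs this identification, and that the two flavors of mutation match the two Hurwitz generators, is the delicate part, since it forces one to track how the Euler form interacts with the symmetrized form that defines the reflections.

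Finally I would conclude. The Crawley-Boevey--Ringel theorem cited in the introduction gives transitivity of the braid action on complete exceptional sequences, while the classical transitivity of the Hurwitz action on minimal factorizations of a Coxeter element supplies the same on the other side. Since $\Phi$ intertwines these two transitive actions and sends the base-point sequence to a minimal factorization of $c$, its image is exactly the orbit of minimal factorizations; thus every value of $\Phi$ is a minimal factorization of $c$ and $\Phi$ is onto. Injectivity follows because the exceptional sequence is recovered from its reflection sequence by reading off dimension vectors as real Schur roots and invoking Gabriel's Theorem to recover the modules. Together these establish that $\Phi$ is the desired bijection.
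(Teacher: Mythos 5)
The paper does not prove this statement at all: it is quoted verbatim as a known theorem of Ingalls--Thomas, with \cite{ingalls2009noncrossing} as the only justification, so there is no internal proof to compare against. Judged on its own merits, your proposal is a correct reconstruction of the standard argument, and it is essentially the route taken in the cited literature: attach to each exceptional module the reflection in its dimension vector (a real Schur root), observe that mutation of exceptional pairs corresponds to the Hurwitz move $(r_E,r_F)\mapsto(r_Er_Fr_E^{-1},r_E)$ --- which is exactly the braid action on factorizations defined later in this paper --- and then transport transitivity (Crawley-Boevey/Ringel on the module side, Deligne/Bessis on the reflection side) through the equivariant map anchored at the base point of simples mapping to a reduced word for $c$. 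Injectivity via the fact that an exceptional module over a hereditary algebra is determined by its dimension vector, and that a reflection determines its positive root, is also right. One slip worth fixing: you say the reflections are taken with respect to ``the symmetric bilinear form in which the simple roots are orthonormalized.'' If the simple roots were orthonormal, the associated reflections would commute and generate $(\mathbb{Z}/2)^n$, not $W$; the correct form is the symmetrized Euler form $(\alpha,\beta)=\langle\alpha,\beta\rangle+\langle\beta,\alpha\rangle$, which you in fact use correctly in the mutation computation, so this is a wording error rather than a structural gap.
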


\subsection{Bijection between one-wayness and relative projectivity}
In this section, we will prove the bijection between one-way reflections in minimal factorizations and relatively projective representations in exceptional sequences.

In order to define one-wayness of reflections, we first need to define the length function and the absolute length function.

\begin{defi}
    Given an reflection $w\in W$ and a set of simple reflections $S$, the \textbf{length} $l(w)$ of $w$ is the smallest $k$ such that $w=s_1\dots s_k$ where $s_i \in S$.
\end{defi}

\begin{defi}
    Given an reflection $w\in W$, the \textbf{absolute length} $l_T(w)$ of $w$ is the smallest $k$ such that $w=r_1\dots r_k$ where $r_i \in T$.
\end{defi}

We can now define one-wayness of a reflection.

\begin{defi}
    We denote the Bruhat order by $<_B$:

    $v <_B w$ if $v^{-1}w \in T$ and $l(v) < l(w)$ where $l$ is the length function
\end{defi}

\begin{defi}
    We denote the absolute order by $<_a$:

    $v <_a w$ if $l_T(w)=l_T(v^{-1}w)+l_T(v)$ where $l_T$ is the absolute length function.
\end{defi}

\begin{defi} [Definition 4 in \cite{biane2019noncrossing}]
    Given a cover relation $v <_a w$. We say that the cover relation $v <_a w$ is \textbf{one-way} if $v <_B w$ and it's $\textbf{two-way}$ if $v >_B w$
\end{defi}

We now switch gears to quiver representations and define relative projectivity. 

\begin{defi}
    Given an exceptional sequence $(E_1,\dots,E_n)$, $E_i$ is \textbf{relatively projective} if $E_i$ is projective in the perpendicular category which is the full subcategory of mod-$\Lambda$ of objects $X$ so that $Hom(E_k,X)=0=Ext(E_k,X)$ for all $k > i$.
\end{defi}

In order to prove the bijection between one-wayness and relative projectivity, we need the following lemmas which characterize them.

\begin{lemma}
    Given an exceptional sequence $(E_1,\dots,E_n)$. $E_i$ is relatively projective if and only if all but one of the simple objects of $\mathcal C=\{E_{i+1},\dots,E_n\}^\perp$ lies in $\{E_i,\dots,E_n\}^\perp$. %$\{E_i,\dots,E_n\}^\perp$ contains exactly one fewer simple object than $\{E_{i+1},\dots,E_n\}^\perp$. 
\end{lemma}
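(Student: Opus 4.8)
The plan is to reduce the statement to a question about a single hereditary abelian category and then settle it by a projective-resolution and Euler-form computation. First I would invoke the standard theory of perpendicular categories (Geigle--Lenzing, Schofield): since $(E_{i+1},\dots,E_n)$ is an exceptional sequence in the hereditary category $\mathrm{mod}\,\Lambda$, the category $\mathcal C=\{E_{i+1},\dots,E_n\}^\perp$ is a hereditary abelian (wide) subcategory with exactly $i$ simple objects $S_1,\dots,S_i$, and $(E_1,\dots,E_i)$ is a complete exceptional sequence in $\mathcal C$. In particular $E_i$ is an exceptional object of $\mathcal C$, relative projectivity of $E_i$ is exactly projectivity of $E_i$ in $\mathcal C$, and $\{E_i,\dots,E_n\}^\perp$ is the perpendicular category $E_i^\perp$ formed inside $\mathcal C$, a wide subcategory with $i-1$ simple objects. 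Because a wide subcategory is closed under sub- and quotient objects, any $S_j$ lying in $E_i^\perp$ is automatically simple there; and since $E_i^\perp$ has only $i-1$ simples while $E_i\notin E_i^\perp$, at most $i-1$ of the $S_j$ can lie in $E_i^\perp$. Thus the statement becomes: $E_i$ is projective in $\mathcal C$ iff exactly $i-1$ of the simples $S_1,\dots,S_i$ lie in $E_i^\perp$.

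Since $\mathcal C$ is hereditary, $E_i$ is projective iff $Ext(E_i,-)$ vanishes on every object, equivalently on every simple $S_j$. For the forward direction, a projective exceptional object is an indecomposable projective $P_j$, whose top is the single simple $S_j$; then $Hom(E_i,S_k)=0=Ext(E_i,S_k)$ for every $k\neq j$, so the $i-1$ simples $S_k$ with $k\neq j$ all lie in $E_i^\perp$, as required.

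For the converse, suppose $S_1,\dots,S_{i-1}\in E_i^\perp$ and $S_i\notin E_i^\perp$. Vanishing of $Hom(E_i,S_j)$ for $j<i$ says the top of $E_i$ is a power $S_i^{\,m}$, so the projective cover gives a short exact sequence $0\to P'\to P_i^{\,m}\to E_i\to 0$ with $P'$ projective (as $\mathcal C$ is hereditary). Applying $Hom(-,S_j)$ for $j<i$ and using $Ext(E_i,S_j)=0$ shows the $P_j$-multiplicity of $P'$ vanishes, so $P'=P_i^{\,b}$ for some $b\ge 0$. It remains to force $b=0$. Here I would use that $E_i$ is exceptional: applying $Hom(-,E_i)$ to the resolution and using $Ext(E_i,E_i)=0$ gives $(m-b)\dim_k Hom(P_i,E_i)=\dim_k End(E_i)$, which over an algebraically closed field reads $(m-b)\,[E_i:S_i]=1$, whence $m-b=1$ and $[E_i:S_i]=1$. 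But $[E_i:S_i]\ge m$ because $S_i^{\,m}$ is a quotient of $E_i$, forcing $m=1$ and $b=0$, so $E_i=P_i$ is projective.

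The main obstacle is exactly this last step: the hypothesis only controls the simples $S_1,\dots,S_{i-1}$, and ruling out a nonzero summand $P_i^{\,b}$ in the kernel is where exceptionality ($Ext(E_i,E_i)=0$ together with $End(E_i)$ a division ring) is indispensable, since without it the resolution $0\to P_i^{\,b}\to P_i^{\,m}\to E_i\to 0$ could have $b>0$ while still leaving $S_1,\dots,S_{i-1}$ in $E_i^\perp$. I would also take care with division-ring dimension factors when the ground field is not algebraically closed, tracking $\dim_k End(S_i)$ and $\dim_k End(E_i)$ in the Euler-form count rather than assuming they equal $1$.
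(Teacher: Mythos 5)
Your forward direction is the same as the paper's (a projective exceptional object is an indecomposable projective, so it has simple top and kills $Ext$ against everything). Your converse, however, is a genuinely different route. The paper argues the contrapositive by Auslander--Reiten theory: if $E_i$ is not projective in $\mathcal C$, set $M=\tau_{\mathcal C}E_i$, use AR duality to get $Ext(E_i,M)\neq 0$, and extract from $M$ a simple with nonzero $Ext$ from $E_i$, which together with the top of $E_i$ gives two simples outside $\{E_i,\dots,E_n\}^\perp$. You instead argue directly: projective cover, heredity of $\mathcal C$ to make the kernel projective, identification of the kernel as $P_i^{\,b}$, and an Euler-form count using $Ext(E_i,E_i)=0$. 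Your route is more elementary (no AR translate, no duality) and, as a bonus, it sidesteps a point the paper leaves implicit: in the AR argument one must still rule out that the ``top'' simple and the ``$Ext$'' simple coincide, which requires the observation that a surjection $E_i\twoheadrightarrow S$ induces a surjection $Ext(E_i,E_i)\twoheadrightarrow Ext(E_i,S)$ in a hereditary category, so $Ext(E_i,S)=0$ for any simple quotient $S$ of $E_i$. What the paper's route buys in exchange is that it works verbatim over any hereditary artin algebra, with no hypothesis on the ground field.

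That last point is where your flagged caveat is a genuine gap, and your proposed remedy (``tracking division-ring dimensions'') does not close it. Over a general hereditary artin algebra --- which is the relevant generality here, since type $B$ requires valued quivers/species rather than quivers over an algebraically closed field --- the count gives $(m-b)\,[E_i:S_i]\,\dim_k End(S_i)=\dim_k End(E_i)$, and this admits the spurious solution $m-b=1$, $[E_i:S_i]=m>1$, $\dim_k End(E_i)=m\,\dim_k End(S_i)$ (for instance $End(S_i)=\mathbb{R}$, $End(E_i)=\mathbb{C}$, $m=2$ is consistent with every dimension constraint available, including those coming from $Hom(E_i,S_i)$ viewed as a module over either endomorphism ring); pure counting cannot eliminate it. The fix is structural rather than numerical: over a hereditary artin algebra $End(P_i)$ is a division ring (a nonzero endomorphism of an indecomposable projective has projective image, hence is injective, hence invertible), so the inclusion $P_i^{\,b}\hookrightarrow P_i^{\,m}$ is an injective matrix over a division ring; it has full column rank and, after invertible row and column operations, is the standard inclusion, hence splits. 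Then $E_i\cong P_i^{\,m-b}$, and indecomposability of $E_i$ (it is exceptional) forces $E_i\cong P_i$. This one step replaces your entire Euler-form computation and makes your converse valid in the same generality as the paper's.
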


\begin{proof}
    Suppose $E_i$ is relatively projective, then $E_i$ is projective in the perpendicular category $\mathcal C$.
    Then there is a simple object $S_i$ in $\mathcal C$ such that $Hom(E_i,S_i) \neq 0$, namely, $S_i$ is the top of $E_i$ in $\mathcal C$. If $E_i$ is projective in $\mathcal C$, $S_i$ is the unique top in $\mathcal C$. Since $E_i$ is projective, there's no simple object $S_j$ in $\mathcal C$ such that $Ext(E_i,S_j) \neq 0$.
    So $S_i$ is the only simple object that is in $\mathcal C$ but not in $\{E_i,\dots,E_n\}^\perp$.

    If $E_i$ isn't relatively projective, then $E_i$ is not projective in $\mathcal C$. Then we take $M=\tau_{\mathcal C}E_i$, the Auslander-Reiten translate of $E_i$ in $\mathcal C$ and we get that $Ext(E_i,M) \neq 0$. By Auslander-Reiten duality we get that $Ext(E_i,soc\,M)\neq0$. So the tops of $E_i$ and socles of $M$ are not in $E_i^\perp$, so more than one of the simple objects of $\mathcal C$ do not lie in $\{E_i,\dots,E_n\}^\perp$.
\end{proof}

By using the dual argument, we can obtain the following lemma for relatively injective objects.

\begin{lemma}
    Given an exceptional sequence $(E_1,\dots,E_n)$. $E_i$ is relatively injective if and only if all but one of the simple objects of $\mathcal C=^\perp\{E_1,\dots,E_i\}$ lies in $^\perp\{E_1,\dots,E_{i+1}\}$. %$\{E_i,\dots,E_n\}^\perp$ contains exactly one fewer simple object than $\{E_{i+1},\dots,E_n\}^\perp$. 
\end{lemma}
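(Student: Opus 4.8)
The plan is to obtain this lemma as a formal dualization of the previous lemma, which characterizes relative projectivity. The key observation is that relative injectivity in a category is precisely relative projectivity in the opposite category, and the perpendicular categories swap roles under this duality: the right-perpendicular category $\{E_{i+1},\dots,E_n\}^\perp$ used to test projectivity of $E_i$ corresponds, under the duality $\mathrm{mod}\text{-}\Lambda \to \mathrm{mod}\text{-}\Lambda^{\mathrm{op}}$, to the left-perpendicular category ${}^\perp\{E_1,\dots,E_i\}$ used here to test injectivity. So the strategy is to set up this duality carefully and then transport the proof of the previous lemma verbatim.

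First I would recall that the standard $k$-duality $D = \mathrm{Hom}_k(-,k)$ sends $\mathrm{mod}\text{-}\Lambda$ to $\mathrm{mod}\text{-}\Lambda^{\mathrm{op}}$, reversing all morphisms and interchanging $\mathrm{Hom}(X,Y)$ with $\mathrm{Hom}(DY,DX)$ and likewise $\mathrm{Ext}(X,Y)$ with $\mathrm{Ext}(DY,DX)$. Under $D$, projectives become injectives, tops become socles, and the Auslander--Reiten translate $\tau$ becomes $\tau^{-1}$. An exceptional sequence $(E_1,\dots,E_n)$ maps to the exceptional sequence $(DE_n,\dots,DE_1)$ in the opposite category, since the vanishing conditions $\mathrm{Hom}(E_j,E_i)=\mathrm{Ext}(E_j,E_i)=0$ for $i<j$ become the corresponding conditions for the reversed sequence. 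Under this correspondence, $E_i$ being relatively injective in $\mathrm{mod}\text{-}\Lambda$ is equivalent to $DE_i$ being relatively projective in $\mathrm{mod}\text{-}\Lambda^{\mathrm{op}}$.

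Next I would identify the perpendicular categories. The category $\mathcal{C} = {}^\perp\{E_1,\dots,E_i\}$ of objects $X$ with $\mathrm{Hom}(X,E_k)=\mathrm{Ext}(X,E_k)=0$ for $k\le i$ maps under $D$ to the right-perpendicular category $\{DE_1,\dots,DE_i\}^\perp$ in $\mathrm{mod}\text{-}\Lambda^{\mathrm{op}}$, and the simple objects of one correspond to the simple objects of the other. Applying the previous lemma to $DE_i$ in the reversed exceptional sequence $(DE_n,\dots,DE_1)$ then states that $DE_i$ is relatively projective if and only if all but one of the simple objects of the relevant perpendicular category lie in a smaller perpendicular category; dualizing back gives exactly the stated condition that all but one simple object of ${}^\perp\{E_1,\dots,E_i\}$ lies in ${}^\perp\{E_1,\dots,E_{i+1}\}$.

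**The main obstacle** I anticipate is purely bookkeeping: verifying that the index shifts and the direction of the perpendicular operators line up correctly under the order-reversing duality, so that ``the simple object omitted'' on the projective side matches ``the simple object omitted'' on the injective side, and that the subcategory ${}^\perp\{E_1,\dots,E_{i+1}\}$ (rather than some neighboring index) is the correct target. As long as one tracks the reversal $(E_1,\dots,E_n)\mapsto(DE_n,\dots,DE_1)$ consistently, no new representation-theoretic input beyond Auslander--Reiten duality is needed, and the proof is a direct transcription of the previous one through $D$.
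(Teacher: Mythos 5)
Your proposal is correct and takes essentially the same route as the paper: the paper offers no actual proof of this lemma, only the one-line remark that it follows ``by using the dual argument,'' and the $k$-duality $D=\mathrm{Hom}_k(-,k)$ transcription you describe (reversing the sequence, swapping projective/injective, top/socle, $\tau/\tau^{-1}$, and left/right perpendicular categories) is exactly that argument made explicit. One caveat on the bookkeeping you yourself flagged: applying the projective lemma to $DE_i=F_{n+1-i}$ in the reversed sequence $(F_1,\dots,F_n)=(DE_n,\dots,DE_1)$ and dualizing back yields the categories ${}^\perp\{E_1,\dots,E_{i-1}\}$ and ${}^\perp\{E_1,\dots,E_i\}$, not ${}^\perp\{E_1,\dots,E_i\}$ and ${}^\perp\{E_1,\dots,E_{i+1}\}$ --- indeed $E_i$ is not even an object of ${}^\perp\{E_1,\dots,E_i\}$, so it cannot be injective there; the indices in the paper's statement appear to be shifted by one, so your method is right but your final claim that it lands ``exactly'' on the stated indices silently absorbs that off-by-one, which sits in the statement rather than in your argument.
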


There are structures in reflection groups similar to the perpendicular categories in quiver representations.

\begin{lemma}[Lemma 1.4.3 in \cite{BESSIS2003647}]
    Given a Coxeter element $c$ and $v\in W$, then $v <_a c$ if and only if $v$ is a Coxeter element of a parabolic subgroup of W, denoted $W_v$.
\end{lemma}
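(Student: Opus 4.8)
The plan is to translate the statement into the geometry of fixed spaces and exploit the transitivity of the Hurwitz action. The basic tool is Carter's lemma, which identifies the absolute length $l_T(w)$ with the codimension of the fixed space $\mathrm{Fix}(w)=\ker(w-1)$, and which moreover says that whenever $w=r_1\cdots r_k$ is a reduced factorization (so $k=l_T(w)$), the roots $\alpha_{r_1},\dots,\alpha_{r_k}$ form a basis of the moved space $\mathrm{Mov}(w)=\mathrm{im}(w-1)=\mathrm{Fix}(w)^{\perp}$. I would first record this together with Steinberg's theorem, so that the pointwise stabilizer $W_v:=\{w\in W : w|_{\mathrm{Fix}(v)}=\mathrm{id}\}$ is again a real reflection group, namely the one generated by all reflections $t\in T$ whose roots lie in $\mathrm{Mov}(v)$, acting faithfully on $\mathrm{Mov}(v)$ with rank exactly $l_T(v)$.

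For the direction $v<_a c\Rightarrow v$ is a Coxeter element of $W_v$, note first that $v<_a c$ gives $l_T(c)=l_T(v)+l_T(v^{-1}c)$, so concatenating reduced factorizations of $v$ and of $v^{-1}c$ produces a reduced factorization of $c$ whose initial segment is a reduced factorization $v=r_1\cdots r_k$. Each $r_i$ fixes $\mathrm{Fix}(v)$ pointwise, hence lies in $W_v$, and by Carter their roots are a basis of $\mathrm{Mov}(v)$; since $\mathrm{Fix}(v)\cap\mathrm{Mov}(v)=0$, the element $v$ acts without nonzero fixed vectors on $\mathrm{Mov}(v)$ and has full reflection length $k=\mathrm{rank}(W_v)$ inside $W_v$. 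The substantive step is to upgrade this \emph{full-length elliptic} element of $W_v$ to a genuine Coxeter element of $W_v$, since the two notions do not coincide in general. For this I would invoke the transitivity of the Hurwitz (braid) action on the set of reduced $T$-factorizations of $c$: transitivity connects $(r_1,\dots,r_n)$ to the standard factorization of $c$ into simple reflections, along which the initial length-$k$ product is visibly a Coxeter element of a standard parabolic, and I would then verify by a rank induction that the property \emph{the initial segment is a parabolic Coxeter element} is preserved under the elementary Hurwitz moves.

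For the converse I would run the dictionary backwards: a Coxeter element $v$ of $W_v$ is a product of the $\mathrm{rank}(W_v)$ simple reflections of $W_v$, whose roots form a simple system in $\mathrm{Mov}(v)$; completing this to a $c$-compatible simple system for $W$ exhibits a reduced factorization of $c$ with initial segment $v$, whence $v<_a c$. The main obstacle, and the delicate point throughout, is precisely this \emph{Coxeter element} versus \emph{merely elliptic} distinction: a full-length elliptic element of a parabolic need not lie below $c$ (in $S_3$ with $c=(123)$, the element $c^{-1}=(132)$ is a Coxeter element of the full group yet is not $<_a c$), so the equivalence is only correct once the Coxeter element of $W_v$ is taken with the orientation inherited from $c$. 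The crux of the whole argument is therefore the Hurwitz-transitivity input, which is exactly what pins down \emph{which} elliptic elements are the $c$-compatible Coxeter elements, and I would make sure to state the converse for parabolic subgroups and Coxeter structures compatible with $c$.
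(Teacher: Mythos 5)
The paper itself gives no proof of this lemma --- it is imported verbatim (in fact, slightly misquoted) from Bessis \cite{BESSIS2003647} --- so your attempt has to be measured against the known argument there. Your setup is correct and standard: Carter's lemma, Steinberg's theorem, and the observation that $v$ is a full-length elliptic element of the pointwise stabilizer $W_v$ of $\mathrm{Fix}(v)$; you also correctly identify that \emph{elliptic of full length} does not imply \emph{Coxeter} (e.g.\ $-1$ in $B_2$). But the bridge you propose for exactly this gap does not hold up. Hurwitz moves $\sigma_i$ with $i\neq k$ leave the product of the first $k$ letters unchanged, so the only moves at issue are those at position $k$, which replace $v=r_1\cdots r_k$ by $r_1\cdots r_{k-1}(r_kr_{k+1}r_k)$ or by $r_1\cdots r_{k-1}r_{k+1}$. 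Proving that \emph{these} moves preserve the property ``the initial $k$-product is a parabolic Coxeter element'' is, after the corank induction you would naturally run, precisely the essential case of the lemma itself: that $ct$ is a parabolic Coxeter element for \emph{every} reflection $t$. So the claimed ``rank induction over the elementary Hurwitz moves'' assumes at top rank exactly what it must prove, and Hurwitz transitivity alone cannot supply it. Every uniform proof injects a concrete classical input at this point. Bessis's route is dual Coxeter theory: for a bipartite Coxeter element $c=c_1c_2$, Steinberg's theorem gives that every reflection has the form $t=c^m s_i c^{-m}$ with $s_i$ simple; then $tc=c^m(s_ic)c^{-m}$ and $cs_i$ (resp.\ $s_ic$) is literally a product of the simple reflections $S\setminus\{s_i\}$, hence a Coxeter element of the standard parabolic $W_{S\setminus\{s_i\}}$, and conjugation preserves parabolic Coxeterness. (Brady--Watt's orthogonal-geometry argument is another viable input.) Without something of this kind your induction does not close, and this is the genuine gap.

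On the converse, your $S_3$ counterexample is correct and is actually a point in your favor: $(132)=c^{-1}$ is a Coxeter element of the parabolic subgroup $W$ itself, yet $(132)\not<_a(123)$, so the biconditional \emph{as stated in the paper, with $c$ fixed} is false. Bessis's actual Lemma 1.4.3 is existentially quantified: $v$ is a Coxeter element of a parabolic subgroup if and only if $v<_a c$ for \emph{some} Coxeter element $c$ of $W$. With that quantifier the converse is routine --- conjugate $W_v$ to a standard parabolic $W_I$, write the image of $v$ as a product of the simple reflections in $I$, append the remaining simple reflections to obtain a Coxeter element lying above it, and conjugate back. Your proposed repair via a ``$c$-compatible simple system'' with ``the orientation inherited from $c$'' is not a repair: either it is left undefined, or, if one defines $c$-compatible to mean ``extends to a reduced $T$-factorization of $c$,'' the converse becomes a tautology. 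The honest fix is to restate the lemma with the existential quantifier (as Bessis does), keep only the forward implication for the fixed $c$ used in the rest of the paper, and then close the forward gap with the Steinberg/bipartite input described above.
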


The correspondence between parabolic subgroups $W_{cr_n\dots r_i}$ and wide subcategories ${r_i,\dots,r_n}^\perp$ is given by restricting the bijection from Gabriel's Theorem to the parabolic subgroups and wide subcategories.

One-way reflections are characterized in a similar way. We will use an equivalent definition of the one-wayness introduced by Josuat-Verg\`es.

\begin{defi}[Definition 2.5 in \cite{MR3395490}]
    Let $\pi_1,\pi_2 \in W$, we say $\pi_2$ is an \textbf{interval refinement} of $\pi_1$ if the simple roots of $W_{\pi_2}$ are included in the simple roots of $W_{\pi_1}$
\end{defi}

\begin{rema}
    Positive roots $W_\pi$ are given by restricting the set positive roots of $W$ to $W_{\pi}$. Simple roots of $W_{\pi}$ are given by the positive roots that cannot be written as a positive linear combination of other positive roots and there's a unique set of such elements in $W_\pi$ which we say is the set of simple roots of $W_\pi$
\end{rema}

The equivalence between one-wayness and interval refinement is proved in \cite{MR3395490}.

\begin{prop}[Proposition 3.4 in \cite{biane2019noncrossing}]
    For $\pi_1, \pi_2 \in W$, the covering relation $\pi_1 <_a \pi_2$ is one-way if and only if $\pi_1$ is an interval refinement of $\pi_2$
\end{prop}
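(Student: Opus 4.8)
The plan is to reduce the statement to a computation about the inversion set of a Coxeter element, after which the two directions split into a reduced-word deletion and a sign analysis. First I would set up the cover. Since $\pi_1 <_a \pi_2$ is a covering relation, $\pi_1^{-1}\pi_2 = t = s_\beta \in T$ for a unique positive root $\beta$, and $l_T(\pi_2) = l_T(\pi_1)+1$. By Bessis's lemma (Lemma 1.4.3 above) $\pi_1$ and $\pi_2$ are Coxeter elements of parabolic subgroups, and since every reflection below $\pi_1$ is below $\pi_2$ we get $W_{\pi_1} \subseteq W_{\pi_2}$ with $|\Gamma_{\pi_1}| = |\Gamma_{\pi_2}| - 1$, where $\Gamma_{\pi_i}$ denotes the simple roots of $W_{\pi_i}$. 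Because $\beta$ lies in the root system of $W_{\pi_2}$, $\pi_1 \in W_{\pi_2}$, and the induced positive system records the same signs as the ambient one, both one-wayness (a condition on $l$) and interval refinement (the condition $\Gamma_{\pi_1}\subseteq\Gamma_{\pi_2}$) are intrinsic to $W_{\pi_2}$. Hence I may assume $W_{\pi_2} = W$, i.e. $\pi_2 = c$ is a Coxeter element of the whole group with simple system $\Gamma = \{\alpha_1,\dots,\alpha_n\}$ and $\pi_1 \lessdot_a c$ is a top cover; write $c = s_1\cdots s_n$ with $s_i = s_{\alpha_i}$ and $\pi_1 = c\,s_\beta$.

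Next I would convert one-wayness into a root-positivity statement. Since $\pi_1^{-1}c = s_\beta \in T$, the relation $\pi_1 <_B c$ is equivalent to $l(\pi_1) < l(c)$; as $\pi_1 = c\,s_\beta$, the standard length criterion $l(ws_\beta) < l(w) \Leftrightarrow w(\beta) < 0$ shows that the cover is one-way iff $c(\beta) < 0$, i.e. iff $\beta \in N(c) := \{\gamma \in \Phi^+ : c(\gamma) < 0\}$. Because $c = s_1\cdots s_n$ is reduced, $|N(c)| = l(c) = n$, so there are exactly $n$ one-way top covers, matching the $n$ maximal standard parabolics $W_{\Gamma\setminus\{\alpha_j\}}$ that interval refinement allows.

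For the direction one-way $\Rightarrow$ interval refinement I would use the reduced-word description $N(c) = \{\gamma_j := s_n s_{n-1}\cdots s_{j+1}(\alpha_j) : 1 \le j \le n\}$. A direct cancellation shows $c\,s_{\gamma_j} = s_1\cdots\widehat{s_j}\cdots s_n$, which is a Coxeter element of the maximal standard parabolic $W_{\Gamma\setminus\{\alpha_j\}}$; since a Coxeter element of a reflection group recovers that group as its parabolic, $W_{\pi_1} = W_{\Gamma\setminus\{\alpha_j\}}$ and therefore $\Gamma_{\pi_1} = \Gamma\setminus\{\alpha_j\}\subseteq\Gamma$, so $\pi_1$ is an interval refinement of $c$.

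The converse, interval refinement $\Rightarrow$ one-way, is where I expect the real work. Suppose $\Gamma_{\pi_1} = \Gamma\setminus\{\alpha_j\}$, so $\pi_1 \in W_{\Gamma\setminus\{\alpha_j\}}$ fixes the fundamental weight $\varpi_j$ (defined by $\langle\varpi_j,\alpha_i^\vee\rangle = \delta_{ij}$). I would compute $c(\varpi_j)$ in two ways: directly, $c(\varpi_j) = \varpi_j - \beta_j$ with $\beta_j := s_1\cdots s_{j-1}(\alpha_j) \in \Phi^+$; and through $\pi_1 s_\beta$, giving $c(\varpi_j) = \varpi_j - \langle\varpi_j,\beta^\vee\rangle\,\pi_1(\beta)$. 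Comparing yields $\langle\varpi_j,\beta^\vee\rangle\,\pi_1(\beta) = \beta_j$. The crux is the sign: since $\beta \in \Phi^+$ its coroot is a positive coroot, so $\langle\varpi_j,\beta^\vee\rangle \ge 0$, and it cannot vanish, which forces $\pi_1(\beta)$ to be a positive multiple of the root $\beta_j$; as $\pi_1(\beta)$ is itself a root this forces $\langle\varpi_j,\beta^\vee\rangle = 1$ and $\pi_1(\beta) = \beta_j > 0$. Then $c(\beta) = \pi_1 s_\beta(\beta) = -\pi_1(\beta) = -\beta_j < 0$, so $\beta \in N(c)$ and the cover is one-way. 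The main obstacle is thus precisely this translation between the absolute order and the Bruhat order through inversion sets, namely identifying $N(c)$ with the reflections that delete a simple letter of $c$ and then pinning down the sign of $\pi_1(\beta)$ via the $\pi_1$-fixed weight $\varpi_j$; an alternative to the sign computation is to invoke injectivity of the map $\pi\mapsto W_\pi$ on the interval $[e,c]$, which identifies $\pi_1$ with $s_1\cdots\widehat{s_j}\cdots s_n$ directly.
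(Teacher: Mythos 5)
The paper itself does not prove this proposition; it quotes it from Biane--Josuat-Verg\`es, so your argument has to be judged on its own merits rather than against an internal proof. Its overall mechanism is sound, and most individual steps check out: the translation of one-wayness into the root-sign condition $c(\beta)<0$ via $l(ws_\beta)<l(w)\Leftrightarrow w(\beta)<0$, the identification of the inversion set $N(c)$ of a reduced word $c=s_1\cdots s_n$ with the $n$ reflections whose right multiplication deletes a letter (the cancellation $c\,s_{\gamma_j}=s_1\cdots\widehat{s_j}\cdots s_n$ is correct), and the fundamental-weight sign computation in the converse are all fine as written.

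The genuine gap is in the reduction step, and it is not a technicality: it is exactly where the content of the proposition sits. Bessis's Lemma 1.4.3 only gives that $\pi_2$ is a Coxeter element of the parabolic $W_{\pi_2}$ in the chamber-free sense, i.e.\ a product of the reflections through the walls of \emph{some} chamber of $W_{\pi_2}$. Everything you do after reducing uses the strictly stronger statement that $\pi_2$ is a product, in some order, of the \emph{canonical} simple reflections of $W_{\pi_2}$ (those attached to $\Gamma_{\pi_2}$, the simple roots cut out by the ambient positive system): you write $c=s_1\cdots s_n$ with $s_i=s_{\alpha_i}$, $\alpha_i\in\Gamma$, and both directions are computed on this reduced word. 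The two notions genuinely differ, and with only the weaker one the statement is false. Concretely, in $W=S_4$ with $\Gamma=\{e_1-e_2,\,e_2-e_3,\,e_3-e_4\}$, the $4$-cycle $\pi_2=(1\,3\,2\,4)$ is a Coxeter element of $W_{\pi_2}=S_4$ but is not a product of $(12),(23),(34)$ in any order; taking $\pi_1=(1\,2\,4)=\pi_2\cdot(13)$ gives a cover $\pi_1<_a\pi_2$ with $l(\pi_1)=4<5=l(\pi_2)$, hence one-way, while the simple roots of $W_{\pi_1}$ are $\{e_1-e_2,\,e_2-e_4\}\not\subseteq\Gamma$, so $\pi_1$ is \emph{not} an interval refinement of $\pi_2$. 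Thus your proof (and the proposition read literally, for arbitrary $\pi_1,\pi_2\in W$) requires the hypothesis, implicit in Biane--Josuat-Verg\`es and in this paper's application, that $\pi_1,\pi_2$ lie in $NC(W,c_0)$ for a \emph{standard} Coxeter element $c_0$, together with the true but nontrivial bridge lemma that every element $\pi$ of $NC(W,c_0)$ is the product, in some order, of the canonical simple generators of $W_{\pi}$. Once that lemma is stated and proved (or cited), your two directions go through essentially verbatim; without it, the proof has a hole precisely at its load-bearing step. The same caveat applies to your proposed alternative ending: the identification of $\pi_1$ with $s_1\cdots\widehat{s_j}\cdots s_n$ via injectivity of $\pi\mapsto W_\pi$ sits downstream of the same reduction, so it inherits the same gap.
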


We are now ready to prove the bijection between one-wayness and relative projectivity.

\begin{thm}\label{thm A}
    Given a Coxeter element c and a minimal factorization $(r_1,\dots,r_n)$ where $r_i \in T$ and its corresponding exceptional sequences $(E_1,\dots,E_n)$.

    Then $r_i$ is one-way if and only if $E_i$ is relatively projective.
\end{thm}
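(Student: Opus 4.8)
The plan is to translate both conditions into statements about simple objects of wide subcategories, equivalently simple roots of parabolic subgroups, and then match them using the two characterizations already recorded: the relative-projectivity Lemma on the quiver side and Biane's Proposition on the reflection-group side.

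First I would fix the maximal chain in the absolute order attached to the factorization, namely $p_i := cr_n\cdots r_i = r_1\cdots r_{i-1}$, so that $e = p_1 <_a p_2 <_a \cdots <_a p_{n+1} = c$ with $p_i^{-1}p_{i+1} = r_i$. By the Bessis Lemma each $p_j$ is a Coxeter element of the parabolic $W_{p_j}$, and under the correspondence quoted above $W_{p_{i+1}} = W_{cr_n\cdots r_{i+1}}$ matches the wide subcategory $\mathcal C = \{E_{i+1},\dots,E_n\}^\perp$, while $W_{p_i} = W_{cr_n\cdots r_i}$ matches $\{E_i,\dots,E_n\}^\perp$. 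The cover relation $p_i <_a p_{i+1}$ is exactly the one whose one-wayness defines the one-wayness of $r_i$.

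The core of the argument is a dictionary between simple objects and simple roots. Restricting Gabriel's Theorem to the wide subcategory $\mathcal C$ identifies its simple objects with the simple roots of $W_{p_{i+1}}$; since $\mathcal C$ perps $n-i$ of the $E_k$ there are exactly $i$ of them, matching the rank $i$ of $W_{p_{i+1}}$. The smaller subcategory $\{E_i,\dots,E_n\}^\perp = \{E_i\}^\perp \cap \mathcal C$ is a wide subcategory of $\mathcal C$, so any simple object of $\mathcal C$ that happens to lie in $\{E_i,\dots,E_n\}^\perp$ is automatically simple there. The crucial compatibility I would verify is that a simple object $S$ of $\mathcal C$ lies in $\{E_i,\dots,E_n\}^\perp$ if and only if its associated simple root of $W_{p_{i+1}}$ is a simple root of $W_{p_i}$. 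This is where the nestedness of the Gabriel correspondence across $\{E_i,\dots,E_n\}^\perp \subset \mathcal C$ has to be used carefully, and it is the step I expect to be the main obstacle.

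Granting the dictionary, both characterizations become the same counting statement. The Lemma says $E_i$ is relatively projective if and only if all but one of the $i$ simple objects of $\mathcal C$ lie in $\{E_i,\dots,E_n\}^\perp$, i.e. if and only if all but one of the simple roots of $W_{p_{i+1}}$ is a simple root of $W_{p_i}$. Since $W_{p_i}$ has rank $i-1$, this is precisely the inclusion of the simple roots of $W_{p_i}$ into those of $W_{p_{i+1}}$, which is the definition of $p_i$ being an interval refinement of $p_{i+1}$. By Biane's Proposition the latter holds if and only if the cover $p_i <_a p_{i+1}$ is one-way, that is, if and only if $r_i$ is one-way. Chaining these equivalences yields the theorem.
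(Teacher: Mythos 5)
Your proposal is correct and takes essentially the same route as the paper: the paper's proof is exactly the chain ``relatively projective $\Leftrightarrow$ all but one simple object of $\{E_{i+1},\dots,E_n\}^\perp$ lies in $\{E_i,\dots,E_n\}^\perp$ $\Leftrightarrow$ interval refinement of the corresponding parabolics $\Leftrightarrow$ one-way,'' using the same three ingredients (the relative-projectivity lemma, the restriction of Gabriel's theorem matching simple objects with simple roots, and the Biane--Josuat-Verg\`es criterion). The compatibility step you flag as the main obstacle is precisely what the paper leaves implicit in the assertion that the parabolic/wide-subcategory correspondence is given by restricting Gabriel's bijection, so your write-up is, if anything, more explicit about where the real content lies.
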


\begin{proof}
    $E_i$ is relatively projective if and only if $\{E_i,\dots,E_n\}^\perp$ contains one fewer simple object than $\{E_{i+1},\dots,E_n\}^\perp$ if and only $W_{r_i^{-1}\dots r_n^{-1}c}$ is a interval refinement of $W_{r_{i+1}^{-1}\dots r_n^{-1}c}$ if and only if $r_i$ is one-way
\end{proof}

Similarly, we have the folloinwg equivalence for relatively injective objecits.

\begin{prop}
    Given a Coxeter element c and a minimal factorization $(r_1,\dots,r_n)$ and its corresponding exceptional sequences $(E_1,\dots,E_n)$.

    Then $K(r_1\dots r_k) <_a K(r_1\dots r_{k-1})$ is one-way if and only if $E_k$ is relatively injective, where $K(w)=w^{-1}c$ is the Kreweras complement of $w$.
\end{prop}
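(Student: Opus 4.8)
The plan is to run the same chain of equivalences that proves Theorem~\ref{thm A}, but with every ingredient replaced by its injective dual: the Lemma characterizing relatively injective objects via left perpendicular categories, the dictionary between wide subcategories and parabolic subgroups, and Proposition~3.4 in \cite{biane2019noncrossing} relating one-way cover relations to interval refinements. Where the proof of Theorem~\ref{thm A} used the right perpendicular categories $\{E_{i+1},\dots,E_n\}^\perp$ and the partial products $r_1\cdots r_i$, here I would use the left perpendicular categories ${}^\perp\{E_1,\dots,E_k\}$ and the Kreweras complements.

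First I would translate the Kreweras complements into Coxeter elements of parabolic subgroups. Since each $r_i$ is an involution and $c=r_1\cdots r_n$, the products telescope:
\[
K(r_1\cdots r_k)=(r_1\cdots r_k)^{-1}c=r_{k+1}\cdots r_n,\qquad K(r_1\cdots r_{k-1})=r_k\cdots r_n.
\]
These are exactly the Coxeter elements attached, under Gabriel's theorem restricted to parabolic subgroups, to the left perpendicular wide subcategories ${}^\perp\{E_1,\dots,E_k\}$ and ${}^\perp\{E_1,\dots,E_{k-1}\}$ respectively; this is the opposite-category version of the Ingalls--Thomas bijection and follows by applying that bijection to $\Lambda^{op}$. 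Because $K(r_1\cdots r_{k-1})=r_k\cdot K(r_1\cdots r_k)$ with $l_T$ increasing by one, the relation $K(r_1\cdots r_k)<_a K(r_1\cdots r_{k-1})$ is genuinely a cover, so the statement is well posed.

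With this dictionary in hand the equivalence reads off as a single chain, dual to the one in the proof of Theorem~\ref{thm A}. By the Lemma characterizing relatively injective objects, $E_k$ is relatively injective if and only if ${}^\perp\{E_1,\dots,E_k\}$ contains exactly one fewer simple object than ${}^\perp\{E_1,\dots,E_{k-1}\}$. By the parabolic--wide correspondence, losing exactly one simple object is the same as the set of simple roots shrinking by one, which says that $W_{K(r_1\cdots r_k)}$ is an interval refinement of $W_{K(r_1\cdots r_{k-1})}$. Finally, by Proposition~3.4 in \cite{biane2019noncrossing}, this interval refinement holds if and only if the cover relation $K(r_1\cdots r_k)<_a K(r_1\cdots r_{k-1})$ is one-way.

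The main obstacle is index bookkeeping rather than any new idea. I must apply the injective Lemma with the correct alignment: relative injectivity of $E_k$ should compare the left perpendicular category in which $E_k$ sits as its (injective) generator, namely ${}^\perp\{E_1,\dots,E_{k-1}\}$, with the category ${}^\perp\{E_1,\dots,E_k\}$ obtained by adjoining perpendicularity to $E_k$. I would therefore state and, if necessary, re-index the relatively injective Lemma so that its two categories are precisely the two realized by $K(r_1\cdots r_{k-1})$ and $K(r_1\cdots r_k)$, keeping the shift straight between the module-side index $k$ and the factorization-side Kreweras complements.
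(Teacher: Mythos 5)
Your proposal is correct and is exactly the argument the paper intends: the paper states this proposition without proof as the ``similar'' dual of Theorem~\ref{thm A}, and your chain (telescoping the Kreweras complements to $r_{k+1}\cdots r_n$ and $r_k\cdots r_n$, passing to the left perpendicular categories ${}^\perp\{E_1,\dots,E_k\}$ and ${}^\perp\{E_1,\dots,E_{k-1}\}$, then invoking the injective lemma, the parabolic--wide dictionary, and Proposition~3.4 of \cite{biane2019noncrossing}) is precisely that dualization. Your closing remark about re-indexing the relatively injective lemma is well taken, since as stated in the paper that lemma's indices are shifted by one relative to what this proposition needs.
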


\subsection{Bijection between signed minimal Coxeter 
factorizations and ordered cluster-tilting objects}

In this section, we will recall the bijection between signed exceptional sequences and ordered cluster-tilting objects and define signed minimal factorizations using bijection between relative projectivity and one-wayness.

First, we need to define signed exceptional sequences.

\begin{defi}[Definition 2.1 in \cite{igusa2017signed}]
    Given an exceptional sequence $(E_1,\dots,E_n)$ is a wide subcategory $A$, \textbf{a signed exceptional sequence} is an exceptional sequence $(E_1,\dots,E_n)$ with a sequence of signs $(\epsilon_1,\dots,\epsilon_n)$ where $\epsilon_i \in \{1,-1\}$ if $E_i$ is relatively projective and $\epsilon_i=1$ if $E_i$ isn't relatively projective.
\end{defi}

The following theorem by Igusa and Todorov shows the importance of studying signed exceptional sequences.

\begin{thm}[Theorem 2.3 in \cite{igusa2017signed}]
    There's a bijection between the set of ordered cluster tilting sets and the set of signed exceptional sequences.
\end{thm}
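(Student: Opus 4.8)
The plan is to produce an explicit bijection by induction on the rank $n$ of the ambient wide subcategory $\mathcal{A}$, exploiting the fact that the indecomposable objects of the cluster category $\mathcal{C}_\mathcal{A}$ are exactly the indecomposable modules of $\mathcal{A}$ together with the shifts $P[1]$ of the indecomposable relative projectives. Under this description a sign is precisely the extra datum recording, for a relatively projective object, whether we keep it as a module or pass to its shift: a value $\epsilon_i=-1$ is available exactly when $E_i$ can be shifted to a legitimate cluster-tilting summand, which is exactly when $E_i$ is relatively projective. This is why the definition permits $\epsilon_i=-1$ only in the relatively projective case, and it is this matching I would upgrade to a bijection.

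Concretely, I would peel off the last term in both directions using the reduction of the cluster category at a rigid summand. From a signed exceptional sequence $((E_1,\epsilon_1),\dots,(E_n,\epsilon_n))$ in $\mathcal{A}$, set $T_n=E_n$ if $\epsilon_n=+1$ and $T_n=E_n[1]$ if $\epsilon_n=-1$; the latter is legitimate because, for the last term, relative projectivity means $E_n$ is projective in $\mathcal{A}$, so $E_n[1]$ is a shifted projective. The truncation $(E_1,\dots,E_{n-1})$ is an exceptional sequence in the rank $(n-1)$ wide subcategory $E_n^\perp$, which is again the module category of a hereditary algebra, so the inductive hypothesis yields an ordered cluster-tilting object of $\mathcal{C}_{E_n^\perp}$; identifying $\mathcal{C}_{E_n^\perp}$ with the reduction of $\mathcal{C}_\mathcal{A}$ at $T_n$ then lifts it to summands $(T_1,\dots,T_{n-1})$ completing $T_n$, and I record the order $(T_1,\dots,T_n)$. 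The inverse reads off $T_n$ (a module giving $(E_n,+1)$, a shifted projective $P[1]$ giving $(E_n,-1)$ with $E_n=P$), reduces $\mathcal{C}_\mathcal{A}$ at $T_n$, and recurses.

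The crux that I expect to be the main obstacle is the well-definedness and mutual inverseness of these recursions, which rests on two compatibility statements. First, I must show that reducing $\mathcal{C}_\mathcal{A}$ at the chosen $T_n$ genuinely realizes the cluster category of the perpendicular wide subcategory $E_n^\perp$, so that the remaining summands descend to a cluster-tilting object there; the shifted-projective case needs the most care, since $P[1]$ must reduce correctly into the module category of $E_n^\perp=P^\perp$. Second, I must check that relative projectivity is intrinsic to this restriction: for $i<n$ the perpendicular category $\{E_{i+1},\dots,E_n\}^\perp$ computed inside $\mathcal{A}$ coincides with the one computed inside $E_n^\perp$, so the permitted sign set of each earlier term is unchanged by the recursion and the two maps respect signs. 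Granting these, the base case $n=1$ (where $\mathcal{A}$ has a single indecomposable $E$, and the two signed sequences $(E,\pm 1)$ match the two ordered clusters $E$ and $E[1]$) starts the induction, and a routine verification that the forward and inverse recursions undo one another completes the proof.
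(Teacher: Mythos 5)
This statement is not proved in the paper at all: it is imported verbatim as Theorem 2.3 of the cited Igusa--Todorov reference, so there is no in-paper argument to compare against. Your outline—interpreting a sign as the choice between a relatively projective module and its shift, then peeling off the last term and recursing through the perpendicular wide subcategory via cluster-category reduction—is essentially the argument given in that reference, with the compatibility of reduction and perpendicular categories (which you correctly flag as the crux) being exactly the technical content Igusa and Todorov supply.
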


In particular, cluster tilting sets are $W$-Catalan objects which means that they are enumerated by the $W$-Catalan numbers $=\prod_{i=1}^n \frac{e_i+h+1}{e_i+1}$ where $e_i$ are the exponents of the root system and $h$ is the Coxeter number.

We can define a signed version of minimal factorization using the bijection in the previous section.

\begin{defi}
    A \textbf{signed minimal factorization} is a minimal factorization $(r_1,\dots,r_n)$ with a sequence of signs $(\epsilon_1,\dots,\epsilon_n)$ where $\epsilon_i \in \{1,-1\}$ if $r_i$ is one-way and $\epsilon_i=1$ if $r_i$ is two-way.
\end{defi}

\begin{rema}
    This definition of signed minimal factorizations is a specialization of the q-statistics on minimal Coxeter facotorizations by Josuat-Verg\`es. It can be obtained by setting $q=2$ and putting the weights on one-way reflections.
\end{rema}

Since each object in the cluster category corresponds to an almost positive root, there's a bijection between the set of ordered cluster tilting sets and the set of ordered faces in the corresponding cluster complex. So we have the following combinatorial bijection.

\begin{cor}
    There's a bijection between the set of ordered faces of a cluster complex and the set of signed minimal factorizations.
\end{cor}

\section{Chord diagrams and their planar duals}

In this section, we will first recall the construction of chord diagrams associated to minimal factorizations.

\subsection{Chord diagrams of type $A$,$B$,$D$}
For the construction of chord diagrams, we follow \cite{reading2007clusters} by Reading. We will do them for type $A$, $B$, $D$ families of Coxeter groups.

\subsubsection{Type A chord diagrams} 

A Coxeter element $c$ of $A_n$ is realized as permutation whose cycle type is $(k_1 \dots k_{n+1})$ where $k_i \in [n+1]$ and $k_i \neq k_j$ 

Given a minimal factorization $(r_1,\dots,r_n)$. The circle chord diagram of type $A$ is a circle where $k_i$ are put on the circle clockwise. 

Draw a chord between $k_i$ and $k_j$ if there's a reflection $r_k$ in the factorization such that $r_k(k_i)=k_j$.

Denote the chord between $i$ and $j$ to be $C_{i,j}$

\begin{exa}
    Figure 1 is the chord diagram for the factorization $(e_2-e_1,e_4-e_2,e_3-e_2)$ in $A_3$. $e_2-e_1$ permutes 1 and 2 so it corresponds to the chord $C_{1,2}$. $e_4-e_2$ permutes 4 and 2 so it corresponds to the chord $C_{2,4}$, $e_3-e_2$ permutes 3 and 2 so it corresponds to the chord $C_{2,3}$.
\end{exa}

\begin{figure}
    \centering
    \begin{tikzpicture}
    % Draw a circle
    \draw (0,0) circle (2cm);
  
    % Add numbers
    \node at (0,2.5) {1};
    \node at (2.5,0) {2};
    \node at (0,-2.5) {3};
    \node at (-2.5,0) {4};
  
    % Connect points
    \draw (2,0) -- (0,2);
    \draw (2,0) -- (0,-2);
    \draw (2,0) -- (-2,0);
    \end{tikzpicture}
    \caption{Chord diagram for $(e_2-e_1,e_4-e_2,e_3-e_2)$}
\end{figure}
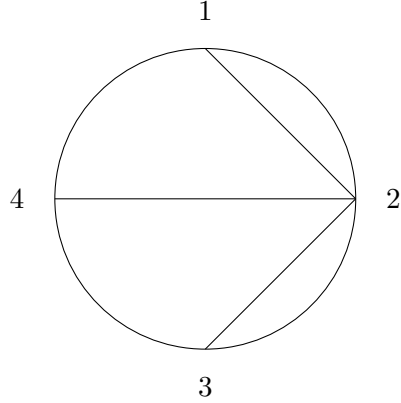

\subsubsection{Type B chord diagrams}

A Coxeter element $c$ of $B_n$ is realized as a signed permutation with a single cycle of length 2n of the form $(k_1 \dots k_{2n})$ where $k_i \in [n] \cup \{-[n]\}$ and $k_i \neq k_j$.

Given a minimal factorization $(r_1,\dots,r_n)$. Circle chord diagram of type $B$ is a circle where $k_i$ are put on the circle clockwise. 

Draw a chord between $k_i$ and $k_j$ if there's a reflection $r_k$ in the factorization such that $r_k(k_i)=k_j$. 

\begin{exa}
    Figure 2 is the chord diagram for the factorization $(e_1-e_2,e_2-e_4,e_2-e_3)$ in $B_2$. $e_2$ permutes 2 and -2 so it corresponds to the chord $C_{2,-2}$. $e_2-e_1$ permutes 1 and 2 as well as  -1 and -2, so it corresponds to the two chords $C_{1,2}$ and $C_{-1,-2}$. $e_2-e_1$ permutes 1 and 2 so it corresponds to the chord $C_{1,2}$
\end{exa}

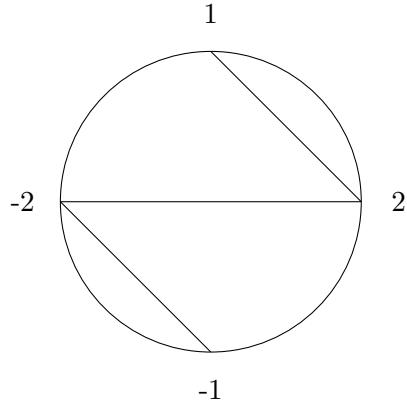
\begin{figure}
    \centering
    \begin{tikzpicture}
    % Draw a circle
    \draw (0,0) circle (2cm);
  
    % Add numbers
    \node at (0,2.5) {1};
    \node at (2.5,0) {2};
    \node at (0,-2.5) {-1};
    \node at (-2.5,0) {-2};
  
    % Connect points
    \draw (2,0) -- (0,2);
    \draw (-2,0) -- (0,-2);
    \draw (2,0) -- (-2,0);
    \end{tikzpicture}
    \caption{Chord diagram for $(e_2,e_2-e_1)$}
\end{figure}

\subsubsection{Type D chord diagrams}

A Coxeter element $c$ of $D_n$ have two cycles, one of length 2 and the other of length $(2n-2)$. The cycle of length 2 is $(i, -i)$ for some $i \in [n]$ 

Without loss of generality, assume the cycle of length 2 is $(1,-1)$. The cycle of length $(2n-2)$ is $(k_1 \dots k_{2n-2})$ where $k_i \in \{\pm2, \pm3, \dots, \pm n\}$

Given a minimal factorization $(r_1,\dots,r_n)$. Circle chord diagram of type $D$ contains two circles where one circle contains the other. $k_i$ are put on the outside circle clockwise and $\{1,-1\}$ are put on the inside counterclockwise.

For $a,b \in \{1, -1\} \cup \{k_1,\dots,k_{2n-2}\}$. Draw a chord between $a$ and $b$ if there's a reflection $r_k$ in the factorization such that $r_k(k_i)=k_j$.

\begin{exa}
    Figure 3 is the chord diagram for the factorization $(e_2+e_1,e_2-e_1,e_3-e_2,e_4-e_3)$ in $D_4$. $e_2+e_1$ permutes 1 and -2 as well as -1 and 2, so it corresponds to the chords $C_{1,-2}$ and $C_{-1,2}$. $e_2-e_1$ permutes 1 and 2 as well as -1 and -2, so it corresponds to the two chords $C_{1,2}$ and $C_{-1,-2}$. $e_3-e_2$ permutes 2 and 3 as well as -2 and -3, so it corresponds to the two chords $C_{2,3}$ and $C_{-2,-3}$. $e_4-e_3$ permutes 3 and 4 as well as -3 and -4, so it corresponds to the two chords $C_{3,4}$ and $C_{-3,-4}$.
\end{exa}

\begin{figure}
    \centering
    \begin{tikzpicture}
    % Draw a circle
    \draw (0,0) circle (2cm);
    \draw (0,0) circle (0.5cm);
  
    % Add numbers
    \node at (0,2.5) {2};
    \node at (2,1) {3};
    \node at (2,-1) {4};
    \node at (0,-2.5) {-2};
    \node at (-2,-1) {-3};
    \node at (-2,1) {-4};
    \node at (-0.8,0) {1};
    \node at (0.8,0) {-1};
    
    % Connect points
    \draw (0,2) -- (1.8,0.8);
    \draw (1.8,0.8) -- (1.8,-0.8);
    \draw (0,-2) -- (-1.8,-0.8);
    \draw (-1.8, -0.8) -- (-1.8, 0.8);
    \draw (0,2) -- (-0.5,0);
    \draw (0,2) -- (0.5,0);
    \draw (0,-2) -- (-0.5,0);
    \draw (0,-2) -- (0.5,0);
    \end{tikzpicture}
    \caption{Chord diagram for $(e_2+e_1,e_2-e_1,e_3-e_2,e_4-e_3)$}
\end{figure}
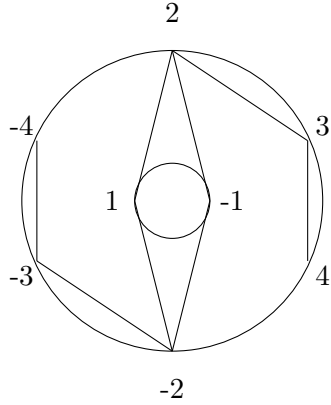

\subsection{Planar dual}
\begin{defi}
    Given a chord diagram. Let $D$ denote the planar dual defined as follows:
    \begin{itemize}
        \item vertices are regions of the chord diagram
        \item two vertices are adjacent if two regions share an edge in the chord diagram
    \end{itemize}
\end{defi}

\begin{exa}
    Figure 4,5,6 demonstrates the planar duals for the previous examples. Since each region contains a unique arc on the circle, we pick a point on the arc as the representative of the corresponding region. Two points are connected if their corresponding region share an edge.
\end{exa}

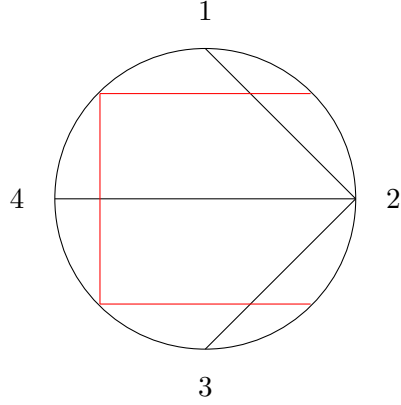
\begin{figure}
    \centering
    \begin{tikzpicture}
    % Draw a circle
    \draw (0,0) circle (2cm);
  
    % Add numbers
    \node at (0,2.5) {1};
    \node at (2.5,0) {2};
    \node at (0,-2.5) {3};
    \node at (-2.5,0) {4};
  
    % Connect points
    \draw (2,0) -- (0,2);
    \draw (2,0) -- (0,-2);
    \draw (2,0) -- (-2,0);
    \draw[red] (1.4, 1.4) -- (-1.4,1.4);
    \draw[red] (-1.4, 1.4) -- (-1.4, -1.4);
    \draw[red] (-1.4, -1.4) -- (1.4, -1.4);
    \end{tikzpicture}
    \caption{Planar dual for $(e_1-e_2,e_2-e_4,e_2-e_3)$ in red}
\end{figure}

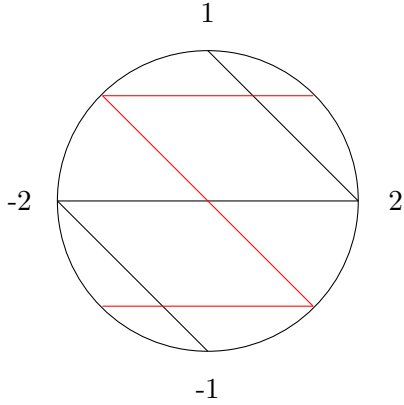
\begin{figure}
    \centering
    \begin{tikzpicture}
    % Draw a circle
    \draw (0,0) circle (2cm);
  
    % Add numbers
    \node at (0,2.5) {1};
    \node at (2.5,0) {2};
    \node at (0,-2.5) {-1};
    \node at (-2.5,0) {-2};
  
    % Connect points
    \draw (2,0) -- (0,2);
    \draw (-2,0) -- (0,-2);
    \draw (2,0) -- (-2,0);
    \draw[red] (-1.4, 1.4) -- (1.4, -1.4);
    \draw[red] (-1.4, 1.4) -- (1.4, 1.4);
    \draw[red] (-1.4, -1.4) -- (1.4, -1.4);
    \end{tikzpicture}
    \caption{Planar dual for $(e_2,e_2-e_1)$ in red}
\end{figure}

\begin{figure}
    \centering
    \begin{tikzpicture}
    % Draw a circle
    \draw (0,0) circle (2cm);
    \draw (0,0) circle (0.5cm);
  
    % Add numbers
    \node at (0,2.5) {2};
    \node at (2,1) {3};
    \node at (2,-1) {4};
    \node at (0,-2.5) {-2};
    \node at (-2,-1) {-3};
    \node at (-2,1) {-4};
    \node at (-0.8,0) {1};
    \node at (0.8,0) {-1};
    
    % Connect points
    \draw (0,2) -- (1.8,0.8);
    \draw (1.8,0.8) -- (1.8,-0.8);
    \draw (0,-2) -- (-1.8,-0.8);
    \draw (-1.8, -0.8) -- (-1.8, 0.8);
    \draw (0,2) -- (-0.5,0);
    \draw (0,2) -- (0.5,0);
    \draw (0,-2) -- (-0.5,0);
    \draw (0,-2) -- (0.5,0);
    \draw[red] (-1,1.8) -- (-2, 0);
    \draw[red] (-1,1.8) -- (-1,-1.8);
    \draw[red] (1,-1.8) -- (2, 0);
    \draw[red] (1,1.8) -- (1,-1.8);
    \draw[red] (-1, 1.8) -- (0,0.5);
    \draw[red] (0, 0.5) to[out=60,in=90] (1, -1.8);
    \draw[red] (1, -1.8) -- (0,-0.5);
    \draw[red] (0, -0.5) to[out=-120,in=-80] (-1, 1.8);
    \end{tikzpicture}
    \caption{Planar dual for $(e_2+e_1,e_2-e_1,e_3-e_2,e_4-e_3)$ in red}
\end{figure}

\subsection{Edge-colored chord diagram}
\begin{defi}
    Given a minimal factorization $(r_1,\dots,r_n)$. A \textbf{coloring of chord diagram} is a function $f: C \rightarrow \{1,2\}$ where $C$ is the set of chords in a chord diagram such that $f(c)=1$ if the corresponding reflection is two-way and $f(c)=2$ if the corresponding reflection is one-way.
\end{defi}

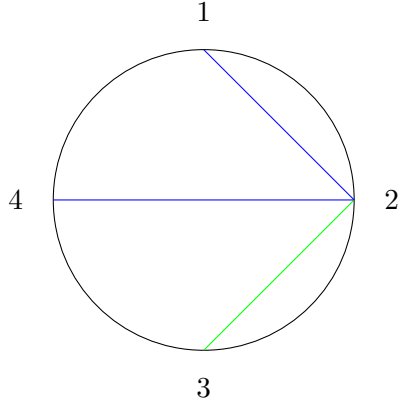
\begin{figure}
    \centering
    \begin{tikzpicture}
    % Draw a circle
    \draw (0,0) circle (2cm);
  
    % Add numbers
    \node at (0,2.5) {1};
    \node at (2.5,0) {2};
    \node at (0,-2.5) {3};
    \node at (-2.5,0) {4};
  
    % Connect points
    \draw[blue] (2,0) -- (0,2);
    \draw[green] (2,0) -- (0,-2);
    \draw[blue] (2,0) -- (-2,0);
    \end{tikzpicture}
    \caption{Edge-colored chord diagram for $(e_1-e_2,e_2-e_4,e_2-e_3)$ where one-way reflections are colored in blue and two-way reflections are colored in green}
\end{figure}

\begin{exa}
    Figure 7 shows the edge-colored chord diagram for $(e_2-e_1,e_4-e_2,e_3-e_2$. $C_{1,2}$ and $C_{2,4}$ are colored blue since they corresponds to one-way reflections. $C_{2,3}$ is colored green since it corresponds to a two-way reflection.
\end{exa}

\section{Reverse braid group action}
In this section, we will define the reverse braid group actions on minimal factorizations and prove that the reverse Garside duals are equivalent to planar duals in chord diagrams.

\subsection{Reverse braid group action}
In this section we will define the reverse braid group action.

\begin{defi}
    Let $B_m$ be the braid group on $m$ strands and denote its generators by $\sigma_1,\dots,\sigma_{m-1}$. Then \textbf{the braid group action} of $B_m$ on the set of m-element sequences of a finite Coxeter group is defined as:

    $\sigma_i(r_1,\dots,r_m)=(r_1,\dots,r_{i-1},r_ir_{i+1}r_i^{-1},r_i,r_{i+2},\dots,r_m)$
\end{defi}

\begin{defi}
    Given a sequence $(r_1,\dots,r_n)$, the reverse map R is defined as $R(r_1,\dots,r_n)=(r_n,\dots,r_1)$.
\end{defi}

If $(r_1,\dots,r_n)$ is a factorization of $c$, then $(r_n,\dots,r_1)$ is a factorization of $c^{-1}$. Thus $R$ is not an element of the braid group $B_n$. However, conjugation by $R$ is an automorphism of $B_n$ sending $\sigma_i$ to $\sigma_{n-i}^{-1}$. So, $R$ is an element of the semi-direct product $B_n\rtimes \mathbb Z_2$ which we denote by $\widetilde B_n$ and call the \textbf{augmented braid group}.

\begin{defi}
    The \textbf{fundamental braid} is an element $\delta$ in the braid group defined as $\delta=\delta_n=\sigma_1\dots\sigma_{n-1}$. The \textbf{Garside element} of the braid group $B_n$ is given by $\Delta=\delta_{n}\delta_{n-1}\dots\delta_2$. The \textbf{reverse Garside element} $\Delta'$ of the augmented braid group $\widetilde B_n$ is given by $R \circ \Delta$. 
    For a minimal factorization $f=(r_1,\dots,r_n)$, we call $\Delta'(f)=f'$ the reverse dual of $f$.
\end{defi}

\begin{exa}
$\Delta'((12),(24),(23))=((12),(12)(24)(12),(12)(24)(23)(24)(12))=((12),(14),(34))$
\end{exa}

The reverse Garside element action was studied by Apostolakis and Ojakian in symmetric groups. \cite{apostolakis2018duality}\cite{ojakian2019combinatorial}. Here we consider it in the setting of finite reflection groups instead of symmetric groups.

The following propositions were proved by Ojakian in \cite{ojakian2019combinatorial} in the case of symmetric groups. The proof for general Coxeter groups is similar.

\begin{prop}
    Given a minimal factorization $(r_1,\dots,r_n)$ of a Coxeter element $c$. $\Delta'(r_1,\dots,r_n)=(r_1',\dots,r_n')$ is a factorization for $c^{-1}$, i.e, $r_1'\dots r_n'=c^{-1}$
\end{prop}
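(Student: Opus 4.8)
The plan is to factor the action $\Delta'=R\circ\Delta$ into its two pieces, the braid-group part $\Delta$ and the reversal $R$, and to track what each does to the product of the sequence. The only algebraic inputs required are that a single braid generator preserves the product and that every reflection is an involution.

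First I would check that each generator $\sigma_i$ preserves the product of a factorization. From the defining formula
\[
\sigma_i(r_1,\dots,r_n)=(r_1,\dots,r_{i-1},\,r_ir_{i+1}r_i^{-1},\,r_i,\,r_{i+2},\dots,r_n),
\]
the product of the new sequence is
\[
r_1\cdots r_{i-1}\,(r_ir_{i+1}r_i^{-1})\,r_i\,r_{i+2}\cdots r_n=r_1\cdots r_n,
\]
since the middle $r_i^{-1}r_i$ cancels. Hence every generator, and therefore every word in the $\sigma_j$, preserves the product; in particular $\Delta=\delta_n\delta_{n-1}\cdots\delta_2$ does. Writing $\Delta(r_1,\dots,r_n)=(s_1,\dots,s_n)$, we conclude $s_1\cdots s_n=c$. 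Moreover each $s_i$ is obtained from some $r_j$ by a sequence of conjugations by elements of $W$, and since $T$ is closed under such conjugation, each $s_i$ is again a reflection in $T$.

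It then remains to apply $R$. Because reflections are involutions, $s_i^{-1}=s_i$ for every $i$, so reversing the sequence yields
\[
s_n s_{n-1}\cdots s_1=s_n^{-1}s_{n-1}^{-1}\cdots s_1^{-1}=(s_1 s_2\cdots s_n)^{-1}=c^{-1}.
\]
Thus $(r_1',\dots,r_n')=R(s_1,\dots,s_n)=(s_n,\dots,s_1)$ is a sequence of reflections whose product is $c^{-1}$, which is exactly the claim.

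There is no genuine obstacle here beyond bookkeeping; the one point that deserves care is fixing the convention for how the word $\Delta$ acts (right-to-left, so that $R\circ\Delta$ first performs all the braid moves and then reverses), which one can verify against the worked example $\Delta'((12),(24),(23))=((12),(14),(34))$. I would also remark that the same two facts, product preservation under each $\sigma_i$ and invariance of the number of factors under $R$, show that $(r_1',\dots,r_n')$ is again a \emph{minimal} factorization, although only the product statement is needed for this proposition.
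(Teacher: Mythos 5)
Your proof is correct, but it follows a genuinely different route from the paper's. The paper's proof starts from the closed-form expression for the entries of the reverse dual, $r_i' = r_1 r_2 \cdots r_{i-1}\, r_i\, r_{i-1}^{-1} \cdots r_1^{-1}$, and observes that the product $r_1' r_2' \cdots r_n'$ telescopes to $r_n r_{n-1} \cdots r_1 = c^{-1}$, using (as you do) that reflections are involutions. You never write the entries explicitly: you factor $\Delta' = R \circ \Delta$, check that each generator $\sigma_i$ preserves the product of the sequence, conclude that any word in the generators (in particular $\Delta$) does too, and then handle $R$ by noting that reversing a sequence of involutions inverts its product. Both arguments rest on the same cancellations, but your decomposition has two advantages: it does not presuppose the closed form for $r_i'$, which the paper invokes without derivation (it is only stated later, in the definition of trails) and which itself requires an induction over braid moves; and it is insensitive to the composition-convention issue you flag, since product preservation holds for every word in the $\sigma_i$ in whatever order, so only the reversal step carries content. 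It also yields the more general fact that $R\circ\beta$ carries factorizations of $c$ to factorizations of $c^{-1}$ for every $\beta$ in the braid group. What the paper's route buys is the explicit entry formula itself, which it needs anyway for the trail construction and the equivalence with planar duals; granted that formula, the proposition is a one-line telescope. Your closing remark that minimality is preserved is also sound, since conjugation keeps the entries inside $T$ and the number of factors is unchanged.
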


\begin{proof}
    \begin{equation}
    \begin{split}
    r_1'\dots r_n' & =r_1(r_1 r_2 r_1^{-1})\dots (r_1r_2\dots r_n \dots r_2^{-1} r_1^{-1}) \\
    & =r_n r_{n-1} r_2^{-1} r_1^{-1} \\
    & =r_n r_{n-1} \dots r_1=c^{-1} \\
    \end{split}
    \end{equation}
\end{proof}

\begin{prop}[\cite{ojakian2019combinatorial}]
    $(\Delta')^2=id$
\end{prop}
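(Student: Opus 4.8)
The plan is to use the explicit conjugation formula for the reverse dual that is already implicit in the proof of the preceding proposition, and then to show that applying $\Delta'$ twice returns every reflection to its original value. Writing $p_k = r_1 r_2 \cdots r_k$ for the prefix products (with $p_0 = e$), the $i$-th entry of $f' = \Delta'(r_1,\dots,r_n)$ is $r_i' = p_{i-1}\, r_i\, p_{i-1}^{-1}$; this is exactly the expression $r_1 r_2 \cdots r_i \cdots r_2^{-1} r_1^{-1}$ appearing in the previous proposition, and it is confirmed by the worked example (there $r_2' = (12)(24)(12)$ and $r_3' = (12)(24)(23)(24)(12)$). Since $\Delta'$ is defined on any factorization, it suffices to compute the entries of $\Delta'(f')$ from the same formula and check that the conjugating elements cancel.

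The key intermediate step is to evaluate the prefix products of the dual factorization $f'$. I claim that $p_j' := r_1' r_2' \cdots r_j' = p_j^{-1}$ for every $j$, and I would prove this by induction on $j$. The base case $p_0' = e = p_0^{-1}$ is clear, and assuming $p_{j-1}' = p_{j-1}^{-1}$ one computes $p_j' = p_{j-1}'\, r_j' = p_{j-1}^{-1}(p_{j-1} r_j p_{j-1}^{-1}) = r_j\, p_{j-1}^{-1} = (p_{j-1} r_j)^{-1} = p_j^{-1}$, where the penultimate equality uses that each reflection is an involution, $r_j = r_j^{-1}$. This is the only place involutivity of reflections enters, and it is the heart of the argument.

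With the prefix products in hand, applying the formula a second time is immediate: the $i$-th entry of $\Delta'(f')$ is $r_i'' = p_{i-1}'\, r_i'\, (p_{i-1}')^{-1} = p_{i-1}^{-1}(p_{i-1} r_i p_{i-1}^{-1})\, p_{i-1} = r_i$, so $(\Delta')^2(f) = f$ and $(\Delta')^2 = \mathrm{id}$.

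The main obstacle is not the computation but pinning down the conjugation formula $r_i' = p_{i-1} r_i p_{i-1}^{-1}$ rigorously from the definition $\Delta' = R \circ \Delta$ with $\Delta = \delta_n \delta_{n-1}\cdots \delta_2$: one must track how the Garside word conjugates each entry and how the final reversal $R$ reindexes the sequence. I would either invoke this as the formula already used (and verified on the example) in the preceding proposition, or establish it by a short induction on the factors $\delta_k$ of $\Delta$. Equivalently, in the augmented braid group $\widetilde B_n$ one has $(\Delta')^2 = (R\Delta)^2 = \phi(\Delta)\,\Delta$, where $\phi$ is the automorphism $\sigma_i \mapsto \sigma_{n-i}^{-1}$ induced by conjugation by $R$, so the content of the proposition is the identity $\phi(\Delta) = \Delta^{-1}$; the direct computation above proves precisely this at the level of the action on factorizations and makes transparent that involutivity of reflections is exactly what forces it.
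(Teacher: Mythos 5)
Your proof is correct and takes essentially the same route as the paper: both rest on the formula $r_i' = p_{i-1} r_i p_{i-1}^{-1}$ and on the identity $r_1'\cdots r_{i-1}' = p_{i-1}^{-1}$, after which conjugating a second time cancels everything. The only difference is one of care rather than substance --- the paper silently asserts that prefix-product identity in the middle line of its computation, while your induction on $p_j' = p_j^{-1}$ makes that step explicit.
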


\begin{proof}
    Consider a factorization $(r_1,\dots,r_n)$ of $c$. 
    
    Denote $(\Delta')^2(r_1,\dots,r_n)=(r_1^\#,\dots,r_n^\#)$

    \begin{equation}
    \begin{split}
    r_i^\# & = r_1' r_2' \dots r_i' \dots r_2' r_1' \\
    & =r_{i-1} \dots r_{1} r_i' r_{1} \dots r_{i-1} \\
    & =r_{i-1} \dots r_{1} (r_1 \dots r_{i-1} r_i r_{i-1} \dots r_1) r_1 \dots r_{i-1} \\
    & =r_i
    \end{split}
    \end{equation}    
\end{proof}

\subsection{Reverse Garside element action preserves relative projectivity}

In this section, we will prove that reverse Garside element action preserves relative projectivity.

\begin{thm}
    Given an exceptional sequence $(E_1,\dots,E_n)$ and its reverse Garside dual $(E_1',\dots,E_n')$. $E_i$ is relatively projective if and only if $E_i'$ is relatively projective.

    In particular, the reverse Garside element action $\Delta'$ gives a bijection between signed exceptional sequences of opposite categories that preserves relative projectivity. 
\end{thm}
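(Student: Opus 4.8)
The plan is to convert the statement about relative projectivity into one about one-wayness via Theorem~\ref{thm A}, and then to show directly that one-wayness survives the reverse Garside action. By the preceding proposition, $\Delta'(r_1,\dots,r_n)=(r_1',\dots,r_n')$ is a minimal factorization of $c^{-1}$; since $c^{-1}$ is again a Coxeter element (the product of the simple reflections in the reverse order), its corresponding exceptional sequence $(E_1',\dots,E_n')$ lives in the opposite module category. Applying Theorem~\ref{thm A} once to $c$ and once to $c^{-1}$ reduces the claim to the purely combinatorial statement that $r_i$ is one-way in $(r_1,\dots,r_n)$ if and only if $r_i'$ is one-way in $(r_1',\dots,r_n')$.

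The key input is the behavior of the prefix products. Writing $w_k=r_1\cdots r_k$ and using $r_j'=(r_1\cdots r_{j-1})\,r_j\,(r_1\cdots r_{j-1})^{-1}$ from the proof that $\Delta'$ produces a factorization of $c^{-1}$, a short induction telescopes the partial products to $r_1'\cdots r_k'=r_k r_{k-1}\cdots r_1=w_k^{-1}$. Thus the maximal chain in the absolute order attached to the dual factorization, $e<_a r_1'<_a r_1'r_2'<_a\cdots<_a c^{-1}$, is obtained from the chain $e<_a w_1<_a\cdots<_a c$ of the original simply by inverting every element: the cover labeled by $r_i$ is $w_{i-1}<_a w_i$, while the cover labeled by $r_i'$ is $w_{i-1}^{-1}<_a w_i^{-1}$ (one checks $w_{i-1}w_i^{-1}=r_i'$).

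Now I would unwind the definition of one-wayness. By definition the cover $w_{i-1}<_a w_i$ is one-way precisely when $w_{i-1}<_B w_i$, and since $w_{i-1}^{-1}w_i=r_i\in T$ this amounts to the length inequality $l(w_{i-1})<l(w_i)$. Identically, $r_i'$ is one-way precisely when $l(w_{i-1}^{-1})<l(w_i^{-1})$. Because ordinary length is invariant under inversion, $l(w)=l(w^{-1})$ for all $w\in W$, the two inequalities coincide, so $r_i$ is one-way if and only if $r_i'$ is one-way; this gives the equivalence of relative projectivity at each index. For the final assertion, $(\Delta')^2=\mathrm{id}$ exhibits $\Delta'$ as an involutive bijection interchanging factorizations of $c$ and $c^{-1}$, hence exceptional sequences of the two opposite categories; a sign datum is exactly a free choice of sign on each relatively projective term, so since relative projectivity is matched index-by-index, $\Delta'$ lifts to a bijection of signed exceptional sequences preserving relative projectivity.

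The step I expect to require the most care is choosing the right avatar of one-wayness in the third paragraph. One-wayness is defined through the Bruhat order, which is \emph{not} conjugation-invariant, so transporting the interval-refinement reformulation used inside the proof of Theorem~\ref{thm A} is a false economy: there the relevant parabolics are the $c$-conjugates $W_{c^{-1}w_{i-1}c}$, whereas for the dual one meets $W_{c\,w_{i-1}^{-1}c^{-1}}$, which differs by conjugation by $c^{2}$ rather than by inversion, and conjugation does not preserve positivity of roots. The whole argument hinges on bypassing the parabolics and reading one-wayness as the length comparison $l(w_{i-1})<l(w_i)$ on the prefix chain, where inversion-invariance of length renders the $c\leftrightarrow c^{-1}$ symmetry transparent.
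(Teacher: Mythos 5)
Your proof is correct, but it runs in the opposite direction from the paper's. The paper proves this statement entirely on the categorical side: writing $\Delta'=R\circ\Delta$, it argues that the Garside action $\Delta$ turns $E_i$, which is projective in $\{E_{i+1},\dots,E_n\}^\perp$, into an injective object of that same perpendicular category, and that the reversal $R$ (passage to the opposite category) turns injectives back into projectives, so $E_i'$ is projective in $(\{E_{i+1},\dots,E_n\}^\perp)^{op}=\{E_{i+1}',\dots,E_n'\}^\perp$; the converse is then obtained by applying $\Delta'$ again, using $(\Delta')^2=\mathrm{id}$. Only afterwards does the paper deduce the one-wayness statement for factorizations (Corollary~\ref{thm B}) via Theorem~\ref{thm A}. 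You instead prove the factorization statement first --- extending the telescoping identity to all prefixes, $r_1'\cdots r_k'=w_k^{-1}$ (the paper's proposition records only the case $k=n$), observing that the dual prefix chain is the element-wise inverse of the original chain, and invoking $l(w)=l(w^{-1})$ --- and then transfer to exceptional sequences by applying Theorem~\ref{thm A} once for $c$ and once for $c^{-1}$. What your route buys: it is more elementary and self-contained, replacing the paper's terse categorical claims about the Garside twist exchanging projectives and injectives with a two-line length computation. What it costs: both of your applications of Theorem~\ref{thm A} require that the Ingalls--Thomas bijection intertwines the reverse Garside action on exceptional sequences with the one on factorizations, i.e.\ that $(E_1',\dots,E_n')$ really is ``the corresponding exceptional sequence'' of $(r_1',\dots,r_n')$ for the Coxeter element $c^{-1}$; this compatibility is standard (Crawley-Boevey/Ringel) and the paper uses it implicitly anyway when passing from its Theorem to Corollary~\ref{thm B}, so you are no worse off. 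One remark on your closing paragraph: the caution about interval refinement is unnecessary, since for $v\le_a c$ the parabolic $W_v$ depends only on the fixed subspace of $v$, and $v$, $v^{-1}$ have the same fixed subspace, so $W_{w_i^{-1}}=W_{w_i}$ with the same simple roots; hence the interval-refinement criterion transports under inversion just as transparently as the length criterion does.
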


\begin{proof}
    Suppose $E_i$ is relatively projective, then $E_i$ is projective in $\{E_{i+1},\dots,E_n\}^\perp$. Let $\Delta'(E_1,\dots,E_n)=(E_1',\dots,E_n')$. Then $\Delta(E_1,\dots,E_n)=(E_n',\dots,E_1')$

    Since $E_i$ is projective in $\{E_{i+1},\dots,E_n\}^\perp$, $E_i'$ is injective in $\{E_{i+1},\dots,E_n\}^\perp$.
    Then $E_i'$ is projective in $(\{E_{i+1},\dots,E_n\}^\perp)^{op}=\{E_{i+1}',\dots,E_n')^\perp$

    Therefore, $E_i'$ is relatively projective.

    The other direction can be obtained by applying $\Delta'$ again since $(\Delta')^2=id$
\end{proof}

We can obtain a similar result for minimal factorizations and one-way reflections.

\begin{cor}\label{thm B}
    Given a minimal factorization $(r_1,\dots,r_n)$ and its reverse Garside dual $(r_1',\dots,r_n')$. $r_i$ is one-way if and only if $r_i'$ is one-way.

    In particular, the reverse Garside element action $\Delta'$ gives a bijection between signed minimal factorizations of inverse Coxeter elements that preserves one-wayness. 
\end{cor}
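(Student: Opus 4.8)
The plan is to deduce this statement from the exceptional-sequence version proved just above (the preceding Theorem) together with the equivalence of one-wayness and relative projectivity furnished by Theorem~\ref{thm A}. The entire argument is a transport of structure across the Ingalls--Thomas bijection, so the first thing I would pin down is that this bijection is equivariant for the reverse Garside element action. Concretely, if $(E_1,\dots,E_n)$ is the exceptional sequence corresponding to the factorization $(r_1,\dots,r_n)$ of $c$, I would argue that the reverse Garside dual $(E_1',\dots,E_n')$ is exactly the exceptional sequence corresponding to the reverse dual factorization $(r_1',\dots,r_n')$ of $c^{-1}$. This holds because $\Delta'=R\circ\Delta$ is assembled from the braid generators $\sigma_i$, whose action on factorizations is defined precisely to match the Crawley--Boevey--Ringel braid action on exceptional sequences, together with the reversal $R$, which on the representation side corresponds to passing to the opposite category.

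With equivariance in hand, the equivalence is a three-step chain. First, by Theorem~\ref{thm A} applied to the factorization $(r_1,\dots,r_n)$ of $c$, the reflection $r_i$ is one-way if and only if $E_i$ is relatively projective. Second, by the preceding Theorem, $E_i$ is relatively projective if and only if its reverse Garside dual $E_i'$ is relatively projective in the opposite category. Third, applying Theorem~\ref{thm A} again, this time to the factorization $(r_1',\dots,r_n')$ of the Coxeter element $c^{-1}$ and its associated exceptional sequence $(E_1',\dots,E_n')$, we get that $E_i'$ is relatively projective if and only if $r_i'$ is one-way. Composing the three equivalences yields that $r_i$ is one-way if and only if $r_i'$ is one-way.

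For the final ``in particular'' statement, I would observe that since one-wayness is preserved index by index, the positions that may carry a sign are identical in $(r_1,\dots,r_n)$ and in $(r_1',\dots,r_n')$. Hence the sign datum $(\epsilon_1,\dots,\epsilon_n)$ transports unchanged, and $\Delta'$ extends to a well-defined map on signed minimal factorizations. That this map is a bijection between signed minimal factorizations of $c$ and those of $c^{-1}$ follows from the relation $(\Delta')^2=\mathrm{id}$ established above, which exhibits $\Delta'$ as its own inverse.

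The main obstacle is the equivariance claim of the first paragraph: one must verify that the reversal $R$, which lies not in $B_n$ but in the augmented braid group $\widetilde B_n$, genuinely intertwines the reverse Garside dual on factorizations with the one on exceptional sequences, and in particular that it is this reversal that accounts for the appearance of the opposite category and of the inverse Coxeter element $c^{-1}$. Once this compatibility is granted, the remainder is purely formal, since the two invocations of Theorem~\ref{thm A} and the single invocation of the preceding Theorem carry all of the content.
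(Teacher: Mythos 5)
Your proof follows exactly the route the paper intends: the paper offers no separate argument for this corollary, deriving it immediately from the preceding theorem on relative projectivity together with Theorem~\ref{thm A}, which is precisely your three-step chain (including the transport across the Ingalls--Thomas bijection and the use of $(\Delta')^2=\mathrm{id}$ for the ``in particular'' clause). Your explicit attention to the equivariance of that bijection under the reverse Garside action is a point the paper leaves implicit, but it is the same argument.
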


\subsection{Equivalence of reverse Garside dual and planar dual}

\begin{defi}
    Given a cycle $(e_1\dots e_k)$ in the cycle type of a Coxeter element $c$. Denote \textbf{the arc associated to $e_i$}, $arc(e_i)$, the arc between $e_i$ and $e_{i+1}$ on the circle corresponding to the cycle $(e_1 \dots e_k)$ in the chord diagram. Denote \textbf{the region associated to $e_i$}, $reg(e_i)$, the unique region in the chord diagram that contains $arc(e_i)$ on the boundary.
\end{defi}

In general, in the chord diagrams, elements of the long cycle are put on the outer circle clockwise. So $arc(e_i)$ is the arc at clockwise position of $e_i$ on the circle.

For type $D$ specifically, the elements of the short cycle are put on the inner circle counterclockwise. So the $arc(e_i)$ in type $D$ on the inner circle is the arc at counterclockwise position of $e_i$ for $e_i$ in the short cycle.

Therefore, there's a bijection between vertices in the chord diagram, $e_i$, and regions in the chord diagram, $reg(e_i)$.

\begin{defi}
    Given a minimal factorization $(r_1,\dots,r_n)$ and its reverse Garside dual $(r_1',\dots,r_n')$ where $r_i'=r_1 \dots r_{i-1} r_i r_{i-1}^{-1} \dots r_1^{-1}$. Denote \textbf{trail of $r_i$} to be the set $Trail(r_i)$ consisting of $r_i$ and all $i_j < i$ such that
    
    $$r_{i_j}r_{i_j+1}\dots r_{i-1} r_i r_{i-1}^{-1} \dots r_{i_j+1}^{-1}r_{i_j}^{-1} \neq r_{i_j+1}\dots r_{i-1} r_i r_{i-1}^{-1} \dots r_{i_j+1}^{-1}$$. 
\end{defi}

We can see that chords corresponding to trail of $r_i$ forms a path in the chord diagram that contains $r_i$.

We can also see that chords corresponding to $r_i'$ and $Trail(r_i)$ shares the same end-points in the chord diagram. 

\begin{thm}\label{thm C}
    Given a Coxeter element c and a minimal factorization $(r_1,\dots,r_n)$. Two regions $reg(e_i)$ and $reg(e_j)$ are adjacent in the planar dual if and only if there's a $r_k'$ in the reverse Garside dual such that $r_k'(e_i)=e_j$
\end{thm}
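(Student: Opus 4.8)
The plan is to prove everything first for type $A$ and then transport the argument to types $B$ and $D$ through the folded chord diagrams. In type $A$ the chords of a minimal factorization of $c$ form a spanning tree $T$ on the $n+1$ points of the circle, and this tree is \emph{noncrossing}: multiplying the long cycle by the transposition $r_n=C_{p,q}$ splits it into two cycles supported on the two contiguous arcs $\{p+1,\dots,q\}$ and $\{q+1,\dots,p\}$ cut out by the chord, and induction on $n$ (together with the fact that a minimal factorization of a product of two disjoint cycles splits into a factorization of each) shows that no two chords interleave. Consequently each of the $n+1$ faces of $T$ contains a single arc, which is exactly the region--arc identification used to define the planar dual, and $T^\ast$ has one edge per chord, namely the edge joining the two faces on either side of that chord. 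It therefore suffices to prove the single statement that, for each $k$, the two faces bounding the chord $r_k$ are the regions indexed by the two endpoints of the reflection $r_k'$; the biconditional in the theorem then follows because $k\mapsto r_k$ and $k\mapsto r_k'$ both enumerate, without repetition, the edges of $T^\ast$.

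I would first make $r_k'$ explicit. Writing $g_k=r_1\cdots r_{k-1}$ we have $r_k'=g_k r_k g_k^{-1}$, so $r_k'$ is again a reflection $C_{a_k,b_k}$ with $a_k=g_k(p)$ and $b_k=g_k(q)$ for $r_k=C_{p,q}$. Using the two facts recorded before the theorem---that $Trail(r_k)$ is a path of original chords through $r_k$, and that $r_k'$ has the same endpoints as this path---these endpoints can be read off the diagram directly: conjugating the current chord by an incident $r_i$ ($i<k$) slides one endpoint one step along the tree edge $r_i$, while conjugation by a disjoint $r_i$ does nothing, so $a_k,b_k$ are obtained by transporting $p$ and $q$ along the two halves of the trail path.

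The heart of the matter is the claim that $reg(a_k)$ and $reg(b_k)$ are precisely the two faces adjacent to $r_k$. I would prove this by induction on $n$, peeling off $r_n$. For $r_n=C_{p,q}$ itself one computes $a_n=g_n(p)=c(q)$ and $b_n=c(p)$, which name exactly the two arcs lying immediately on either side of the chord, so the statement is immediate; the orientation here (and the fact that $(r_1',\dots,r_n')$ factors $c^{-1}$) is what fixes the clockwise convention in the identification of an endpoint with its region. For $r_k$ with $k<n$, the chord lies inside one of the two contiguous arcs, say $A$, and $(r_1,\dots,r_{n-1})$ restricts to a minimal factorization of the sub-cycle on $A$. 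Since the two arcs have disjoint supports, $g_k$ factors as $g_k^A g_k^B$ and acts on $p,q\in A$ through $g_k^A$ alone, so the transported endpoints agree with those computed inside the sub-diagram; moreover deleting $r_n$ affects only the two faces it bounds, which correspond under the decomposition to the sub-diagram's outer face, leaving the faces around $r_k$ unchanged. The inductive hypothesis applied to the sub-factorization then gives the claim. I expect the delicate point to be this last bookkeeping---matching the faces (in particular the ``wrap-around'' arc) of the sub-diagram with those of the full diagram, and checking that the orientation conventions survive the restriction.

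Granting the claim, both directions are immediate: if $r_k'(e_i)=e_j$ then $\{e_i,e_j\}=\{a_k,b_k\}$, so $reg(e_i),reg(e_j)$ bound $r_k$ and are adjacent in $T^\ast$; conversely any adjacency in $T^\ast$ comes from a shared chord $r_k$, whose bounding faces are $reg(a_k),reg(b_k)$, giving the required $r_k'$. Finally I would carry the whole argument over to types $B$ and $D$: the folded chord diagrams were set up precisely so that they have $n$ points on the circle with noncrossing clockwise-decreasing edges, i.e.\ the same combinatorial features as type $A$, so the trail-to-face analysis applies verbatim once one accounts for the extra structure---the central reflection, which becomes the loop of $BT_n$ in type $B$, and the length-two cycle with its $\pm$ symmetry in type $D$. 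Handling these folded cases cleanly, rather than the type $A$ core, is where I expect the remaining work to lie.
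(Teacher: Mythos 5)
Your type $A$ core is sound in outline, and it is a genuinely different route from the paper's: the paper fixes the factorization and inducts on the prefix $r_1,\dots,r_i$, using the notion of $Trail(r_i)$ (a path of earlier chords through $r_i$ whose endpoints agree with those of $r_i'$) to identify the two regions bounded by $r_i$; you instead induct on the rank, peeling off $r_n$, splitting the long cycle into two arcs with disjoint supports, and computing the endpoints of $r_k'=g_k r_k g_k^{-1}$ explicitly. Your approach buys an explicit formula for which regions $r_k'$ joins, at the cost of the wrap-around bookkeeping you flag; the paper's trail argument buys type-uniformity, which is exactly what your proposal lacks. (A small caution: with the orientation conventions that make the paper's Figure 8 example come out correctly, the endpoints of $r_n'$ are $c^{-1}(p),c^{-1}(q)$ rather than $c(p),c(q)$; as you note, this is precisely the convention that must be pinned down.)

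The genuine gap is the final step, where you transport the argument to types $B$ and $D$ ``through the folded chord diagrams.'' The theorem is a statement about the planar dual of the chord diagrams of Section 2: for $B_n$ a centrally symmetric diagram on $2n$ points in which a reflection $e_i\pm e_j$ contributes \emph{two} chords, and for $D_n$ an annular diagram with an inner circle carrying $1,-1$. It is not a statement about the folded diagrams, which are introduced only later, are built \emph{using} the reverse Garside dual, and in type $D$ are 2-to-1 with factorizations, so a statement proved at the folded level cannot simply be pulled back. Moreover, the structural facts your induction rests on fail in these types: a reflection is no longer a single chord, so ``peel off $r_n$ and split into two sub-factorizations with disjoint supports'' has no direct analogue (a reflection $e_i\pm e_j$ moves four points, and deleting its chords does not cut the diagram into two arcs with disjoint supports); and in type $D$ the diagram is not a disk, so the face--arc correspondence and the tree-duality edge count both need to be re-derived. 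The paper's proof avoids all of this because the trail argument never uses that a reflection is one chord, only that the chords of $Trail(r_i)$ form a path sharing endpoints with $r_i'$ that sweeps out the boundary between the two regions adjacent to $r_i$. So as written, your proposal establishes the theorem only in type $A$, while the cases for which the theorem is actually invoked later (the type $B$ and $D$ folded dual constructions) remain open.
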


\begin{proof}
    Prove by induction on $r_i$ in the factorization.
    
    It holds for $r_1$ since there's only one chord in the chord diagram.

    Assume it's true for $r_{i-1}$, prove it for $r_i$.

    Let $Trail(r_i)=\{r_{i_1},\dots,r_{i_j},r_i\}$ where $i_j < i$.

    By assumption, each $r_{i_j}$ gives the adjacency for the two regions that contains $r_{i_j}$ on the boundary.

    Denote the regions that contains $r_{i+1}$ on the boundary as $reg(e_i)$ and $reg(e_j)$

    $Trail(r_i)$ contains all the chords on the boundary of $reg(e_i)$ and $reg(e_j)$ in the chord diagram of $W_{r_1,\dots,r_i}$

    Therefore, $r_i$ gives adjacency of $reg(e_i)$ and $reg(e_j)$ in the planar dual.

    Since $Trail(r_i)$ is given by the reverse Garside dual, this proves the theorem.
\end{proof}

\subsection{Edge-colored planar dual}
\begin{prop}
    A chord in the chord diagram has the same color as its corresponding chord in the planar dual
\end{prop}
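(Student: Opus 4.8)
The plan is to reduce the statement to the one-wayness invariance already recorded in Corollary \ref{thm B}, once the correct identification of ``corresponding chords'' is made. First I would pin down that correspondence. By Theorem \ref{thm C}, the chords of the planar dual are exactly the reflections $r_k'$ of the reverse Garside dual $(r_1',\dots,r_n')$: the adjacency $reg(e_i)\sim reg(e_j)$ is witnessed by the unique $r_k'$ with $r_k'(e_i)=e_j$. Under the natural indexing this matches the chord $r_k$ of the original diagram with the chord $r_k'$ of the planar dual, so that the phrase ``a chord and its corresponding chord in the planar dual'' refers precisely to the pair $(r_k,r_k')$.

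Next I would make the coloring conventions explicit on both sides. By the definition of a coloring of a chord diagram, the chord for $r_k$ is assigned color $2$ when $r_k$ is one-way and color $1$ when $r_k$ is two-way. The planar dual is itself an edge-colored chord diagram, namely the chord diagram of the reverse Garside dual factorization $(r_1',\dots,r_n')$ of $c^{-1}$; accordingly the chord for $r_k'$ is assigned color $2$ when $r_k'$ is one-way and color $1$ otherwise. The key point to verify here is that the planar dual inherits its coloring from the one-wayness of the $r_k'$ and not from some independently imposed rule; once the planar dual is recognized as the chord diagram of the reverse Garside dual, this is forced.

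With both identifications in place, the proposition is immediate: Corollary \ref{thm B} states that $r_k$ is one-way if and only if $r_k'$ is one-way, so the color assigned to the chord $r_k$ agrees with the color assigned to the chord $r_k'$. Hence a chord and its corresponding planar-dual chord always carry the same color, with no further computation needed.

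The main obstacle, and the only step I would treat with care, is the bookkeeping of the identification in the first two paragraphs: confirming that the index-$k$ chord of the planar dual really is $r_k'$ (so that the coloring compares $r_k$ against $r_k'$ rather than against some other dual chord), and that the coloring rule on the dual diagram is the same one-way/two-way assignment applied to the factorization of $c^{-1}$. Everything beyond this matching is a direct citation of Corollary \ref{thm B}.
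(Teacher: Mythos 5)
Your proof is correct and is essentially the paper's own (implicit) argument: the paper states this proposition without proof immediately after Corollary \ref{thm B} and Theorem \ref{thm C}, treating it as the direct combination of the two, which is exactly the identification ($r_k \leftrightarrow r_k'$ via Theorem \ref{thm C}) plus the one-wayness transfer (Corollary \ref{thm B}) that you spell out.
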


This means, given a edge-colored chord diagram, we can construct its edge-colored dual purely using information from the chord diagram.

\begin{exa}
    Figure 8 shows the edge-colored chord diagram and the edge-colored planar dual for the factorization $(e_2-e_1,e_4-e_2,e_3-e_2)$. The chord diagram consists of chords $C_{1,2}$, $C_{2,4}$ and $C_{2,3}$ of which $C_{1,2}$ and $C_{2,4}$ are colored blue since they are one-way and $C_{2,3}$ is colored black.

    Denote the chords in the planar dual as $C_{i,j}'$. The planar dual consists of chords $C_{1,2}'$, $C_{1,4}'$ and $C_{3,4}'$ of which $C_{1,2}'$ and $C_{1,4}'$ are colored yellow since they are one-way and $C_{3,4}$ is colored red since it's two-way.
\end{exa}

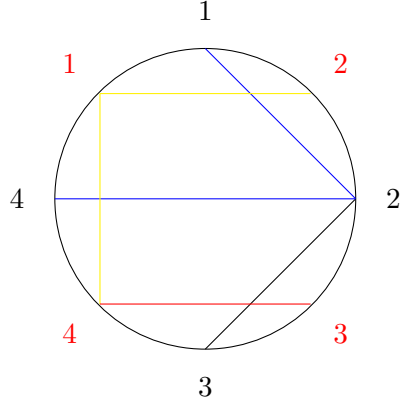
\begin{figure}
    \centering
    \begin{tikzpicture}
    % Draw a circle
    \draw (0,0) circle (2cm);
  
    % Add numbers
    \node at (0,2.5) {1};
    \node at (2.5,0) {2};
    \node at (0,-2.5) {3};
    \node at (-2.5,0) {4};
    \node[text=red] at (1.8,1.8) {2};
    \node[text=red] at (1.8,-1.8) {3};
    \node[text=red] at (-1.8,-1.8) {4};
    \node[text=red] at (-1.8,1.8) {1};
  
    % Connect points
    \draw[blue] (2,0) -- (0,2);
    \draw (2,0) -- (0,-2);
    \draw[blue] (2,0) -- (-2,0);
    \draw[yellow] (1.4, 1.4) -- (-1.4,1.4);
    \draw[yellow] (-1.4, 1.4) -- (-1.4, -1.4);
    \draw[red] (-1.4, -1.4) -- (1.4, -1.4);
    \end{tikzpicture}
    \caption{Given the factorization $(e_1-e_2,e_2-e_4,e_2-e_3)$. One-way reflections of the chord diagram in blue and one-way reflections of the planar dual in yellow}
\end{figure}

\section{Generalized Goulden-Yong duals}

Goulden and Yong constructed a bijection between the set of minimal factorizations in the symmetric group $S_n$ and the set of labelled trees on n vertices in \cite{MR1897927}. In this section, we will first review the Goulden-Yong duals and generalize their construction to type B and type D reflection groups.

\subsection{Goulden-Yong duals}

We will first review the Goulden-Yong duals.

\begin{defi}\cite{MR1897927}
    \textbf{Goulden-Yong dual} is constructed as follows:
    
    -Given a Coxeter element in $A_n$, construct its chord diagram

    -Place a vertex in each region of the chord diagram
    
    -Place an edge between two vertices if the boundaries of their corresponding regions share a chord

    -(Temporarily) label the edge by the chord it crosses

    -Label the vertex corresponding to the region $R(1)$ by n+1

    -For each edge, find the unique path to the vertex $n+1$ and slide the temporary label on the edge to the incident vertex away from $n+1$.
\end{defi}

\begin{figure}
    \centering
    \begin{tikzpicture}[baseline={(0,0)}]
    % Draw a circle
    \draw (0,0) circle (2cm);
  
    % Add numbers
    \node at (0,2.5) {1};
    \node at (2.5,0) {2};
    \node at (0,-2.5) {3};
    \node at (-2.5,0) {4};
    \node[text=red] at (1.8,1.8) {2};
    \node[text=red] at (1.8,-1.8) {3};
    \node[text=red] at (-1.8,-1.8) {4};
    \node[text=red] at (-1.8,1.8) {1};
    \node at (0.8, 0.8) {1};
    \node at (0.8, 0.2) {2};
    \node at (0.8, -0.8) {3};
    \node[text=red] at (0,1) {1};
    \node[text=red] at (-1,0.2) {2};
    \node[text=red] at (0,-1) {3};
  
    % Connect points
    \draw (2,0) -- (0,2);
    \draw (2,0) -- (0,-2);
    \draw (2,0) -- (-2,0);
    \draw[red] (1.4, 1.4) -- (-1.4,1.4);
    \draw[red] (-1.4, 1.4) -- (-1.4, -1.4);
    \draw[red] (-1.4, -1.4) -- (1.4, -1.4);
    \end{tikzpicture}
    \begin{tikzpicture}[baseline={(0,0)}]
    % Add numbers
    \node[text=red] at (1.8,1.8) {1};
    \node[text=red] at (1.8,-1.8) {3};
    \node[text=red] at (-1.8,-1.8) {2};
    \node[text=red] at (-1.8,1.8) {4};
  
    % Connect points
    \draw[red] (1.4, 1.4) -- (-1.4,1.4);
    \draw[red] (-1.4, 1.4) -- (-1.4, -1.4);
    \draw[red] (-1.4, -1.4) -- (1.4, -1.4);
    \end{tikzpicture}
    \caption{Picture on the left is the chord diagram and planar dual of the factorization $(e_1-e_2,e_2-e_4,e_2-e_3)$. Picture on the right is the Goulden-Yong dual of this factorization}
\end{figure}
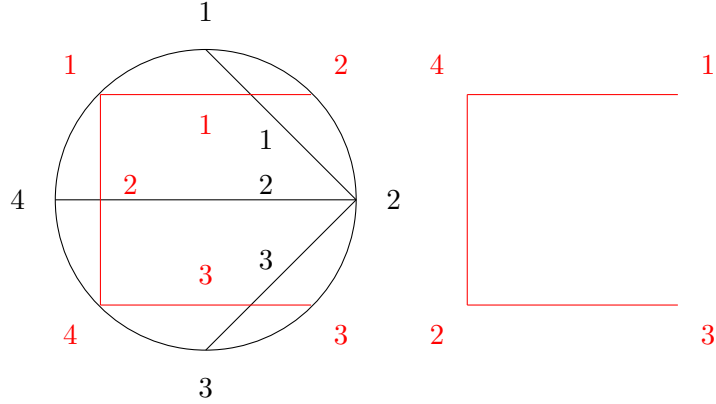

\begin{exa}
    Figure 9 is an example of the Goulden-Yong dual. Consider the factorization $(e_1-e_2,e_2-e_4,e_2-e_3)$. To get the Goulden-Yong dual, first change the vertex label 1 to 4. Then push the edge label to the vertex away from 4.
\end{exa}

\begin{rema}
    We can see that before assigning labels to vertices, the construction of Goulden-Yong duals agrees with the construction of planar duals. So we will construct type B,D Goulden-Yong duals from planar duals for type B,D.
\end{rema}

The important consequence of Goulden-Yong duals is that it gives a bijection between the set of minimal factorizations in $S_n$ and the set of trees on $n$ vertices.

\begin{thm}[Theorem 1.1 in \cite{MR1897927}]
    Let $F_n$ be the set of minimal factorizations for a Coxeter element in $A_n$ and $T_n$ be the set of labeled trees on $n+1$ vertices. Goulden-Yong dual gives a bijection between $f: F_n \rightarrow T_n$ such that a reflection $r_i$ in the factorization corresponds to a vertex $i$ in the tree where $i\le n$.
\end{thm}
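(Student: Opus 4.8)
The plan is to translate the geometric Goulden--Yong construction entirely into the algebraic language of the reverse Garside dual, so that both well-definedness and injectivity become consequences of Theorem~\ref{thm C} together with the involutivity of $\Delta'$, after which a cardinality count finishes the argument.

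First I would check that the output is always a labeled tree on $n+1$ vertices. The underlying graph is the planar dual of the chord diagram, whose vertices are the regions $reg(e_i)$; by the region--point bijection these are identified with the $n+1$ points $e_1,\dots,e_{n+1}$ of the cycle of $c$. By Theorem~\ref{thm C}, $reg(e_a)$ and $reg(e_b)$ are adjacent exactly when some reverse dual reflection $r_k'$ satisfies $r_k'(e_a)=e_b$, so the $n$ edges of the planar dual are precisely the transpositions $r_1',\dots,r_n'$ of the reverse Garside dual. Since $r_1'\cdots r_n'=c^{-1}$ is an $(n+1)$-cycle, these transpositions act transitively on the $n+1$ points, whence the planar dual is connected with $n$ edges on $n+1$ vertices and is therefore a tree. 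Rooting this tree at the region labeled $n+1$, each edge has a unique endpoint farther from the root, and label-pushing sends the temporary label of each edge to that endpoint. This is the standard bijection between the $n$ edges and the $n$ non-root vertices of a rooted tree, so the labels $1,\dots,n$ are distributed bijectively, producing a tree on $\{1,\dots,n+1\}$ in which the chord of $r_i$ contributes the vertex labeled $i$. Thus $f\colon F_n\to T_n$ is well defined and carries the stated correspondence $r_i\leftrightarrow i$.

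For bijectivity I would invoke the classical count: D\'enes' theorem gives $|F_n|=(n+1)^{n-1}$, which by Cayley's formula equals the number $|T_n|$ of labeled trees on $n+1$ vertices, so it suffices to prove injectivity. Suppose $f(F)=f(\tilde F)$ for two minimal factorizations. Since label-pushing is, for each fixed rooted tree shape, a bijection between edge-labelings and vertex-labelings, undoing it recovers the same edge-labeled planar dual for $F$ and $\tilde F$. The key compatibility, which I would extract from the trail argument in the proof of Theorem~\ref{thm C}, is that the dual edge crossing the chord of $r_i$ is exactly the chord of $r_i'$; hence the edge-labeled planar dual is literally the indexed sequence $(r_1',\dots,r_n')=\Delta'(F)$. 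Therefore $\Delta'(F)=\Delta'(\tilde F)$, and because $(\Delta')^2=\mathrm{id}$ the map $\Delta'$ is injective, giving $F=\tilde F$. Injectivity together with the equality of cardinalities yields the bijection.

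The main obstacle is the geometric-to-algebraic translation underlying the two claims above: that the planar dual really has exactly $n+1$ vertices (so the region--point bijection survives even when chords cross), and that the Goulden--Yong labeling ``by the chord crossed'' matches the indexing of the reverse dual, i.e. the chord crossed by the dual edge for $r_i$ is the chord of $r_i'$. I expect to settle both through a careful analysis of $Trail(r_i)$: the trail of $r_i$ is the unique path in the chord diagram whose endpoints coincide with those of $r_i'$, and an induction on $i$ (building the chord diagram one reflection at a time, as in the proof of Theorem~\ref{thm C}) shows that adjoining $r_i$ merges exactly the two regions separated by that path and attaches the dual edge labeled $i$. Once this bookkeeping is secured, every remaining step is either the standard rooted-tree label-pushing bijection or a direct appeal to $(\Delta')^2=\mathrm{id}$.
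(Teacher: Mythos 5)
The paper does not actually prove this statement; it is quoted from Goulden--Yong as background, so your attempt can only be judged on its own merits. The well-definedness half of your argument is sound and is a nice algebraic repackaging: identifying the dual vertices with the $n+1$ points via the region--arc bijection, using Theorem~\ref{thm C} to identify the dual edges with the transpositions $r_1',\dots,r_n'$, and noting that these generate a transitive group (their product is the $(n+1)$-cycle $c^{-1}$), so the dual is connected with $n$ edges on $n+1$ vertices and hence a tree.

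The injectivity argument, however, has a genuine gap, and it sits exactly where the real content of Goulden--Yong's theorem lies. From $f(F)=f(\tilde F)$, undoing the label-pushing recovers the planar duals of $F$ and $\tilde F$ only as \emph{abstract} rooted edge-labeled trees; it does not recover which pair of points $\{e_a,e_b\}$ each dual edge joins, because the vertex labels of the output tree are the pushed edge labels, not the point labels. Your assertion that ``the edge-labeled planar dual is literally $(r_1',\dots,r_n')$'' is true only when the dual vertices carry their identification with the points of the circle, and that identification is precisely what $f$ forgets; so $\Delta'(F)=\Delta'(\tilde F)$ does not follow. The loss is real, not cosmetic: already for $n=2$ the three factorizations $((12),(23))$, $((23),(13))$, $((13),(12))$ of the $3$-cycle have pairwise isomorphic abstract edge-labeled duals (a path with edge labels $1,2$), and they are distinguished only by where the root region $R(1)$ sits relative to the circular order of regions. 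Closing the gap requires showing that the abstract rooted edge-labeled tree, via the clockwise-decreasing arrangement of labels around each vertex, forces a unique planar embedding and hence a unique region-to-point assignment; this rigidity is the key lemma of Goulden--Yong's original proof, and neither your trail analysis (which only addresses the indexing compatibility, i.e.\ that the dual edge crossing the chord of $r_i$ is the chord of $r_i'$) nor $(\Delta')^2=\mathrm{id}$ supplies it. A secondary remark: invoking D\'enes' count $|F_n|=(n+1)^{n-1}$ is logically admissible since it has independent proofs, but it runs against the grain of this paper, where the theorem is meant to \emph{produce} that count bijectively via Pr\"ufer codes.
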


\subsection{Folded chord diagram}
In order to define Goulden-Yong duals for type B and D, we will first define a variation of the chord diagram called folded chord diagrams. They share some properties with the chord diagrams of type A which allows us to build Goulden-Yong duals from them.

\subsubsection{Type B folded chord diagrams}

We will define the type B folded chord diagrams and their folded duals in this section.

\begin{defi}
Given a Coxeter element in $B_n$ of the form $(k_1 \dots k_{2n})$ where $k_i \in [n] \cup \{-[n]\}$ and $k_i \neq k_j$ and a minimal factorization $(r_1,\dots,r_n)$. 

\textbf{Folded circle chord diagram of type B} is a circle where $|k_i|$ are put on the circle clockwise for $1 \leq i \leq n$ 

Draw a chord between $|k_i|$ and $|k_j|$ if there's a reflection $r_k$ in the factorization such that $|r_k(k_i)|=|k_j|$.    
\end{defi}

\begin{rema}
    Each chord in the folded diagram between $i$ and $j$ can either be $e_i + e_j$ or $e_i-e_j$. However, if we fix a Coxeter element, there's only one choice for each chord in a minimal factorization. So we don't need to distinguish between the two choices for the chords in the folded chord diagram.
\end{rema}

Chords in type A chord diagrams form a tree for each factorization. We have similar properties in type B folded chord diagrams.

\begin{prop}
    The chords in a type B folded chord diagram form a tree with a loop attached to one of the vertices.
\end{prop}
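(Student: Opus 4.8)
The plan is to pin down the graph structure by counting chords and controlling two invariants: a sign-parity homomorphism and the connectivity forced by $c$ being a single long cycle. Recall that a Coxeter element of $B_n$ has absolute length $n$, so a minimal factorization $(r_1,\dots,r_n)$ consists of exactly $n$ reflections, and hence the folded diagram has exactly $n$ chords. Each reflection of $B_n$ is either a sign change $e_i$, which sends $i\leftrightarrow -i$ and therefore produces a loop at the vertex $i$, or a reflection of the form $e_i\pm e_j$, which produces a genuine chord between the distinct vertices $|i|$ and $|j|$. Writing $s$ for the number of sign-change reflections and $\ell=n-s$ for the number of the remaining reflections, the folded diagram has $s$ loops and $\ell$ edges, so it suffices to prove $s=1$ and that the $\ell$ edges form a spanning tree.

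First I would control $s$ modulo $2$. Using the semidirect product decomposition $B_n=(\mathbb Z/2)^n\rtimes S_n$, define $\Psi\colon B_n\to\mathbb Z/2$ sending a signed permutation to the parity of its number of negated coordinates. A short check in the semidirect product shows $\Psi$ is a homomorphism (it is independent of the underlying $S_n$-part), with $\Psi(e_i)=1$ and $\Psi(e_i\pm e_j)=0$. Since $c$ is a single negative cycle, equivalently since its reduced word in the simple reflections uses the unique sign-change generator exactly once, we get $\Psi(c)=1$; being a homomorphism to an abelian group, $\Psi$ is a class function, so this holds for every Coxeter element. Applying $\Psi$ to $c=r_1\cdots r_n$ gives $s\equiv \Psi(c)\equiv 1\pmod 2$, so $s$ is odd, and in particular there is at least one loop.

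Next I would bound $s$ from above by a connectivity argument. Suppose the edge set, namely the chords coming from the $e_i\pm e_j$, did not connect all $n$ vertices; say the vertices split into nonempty blocks $A\sqcup B$ with no edge between them. Then every $r_k$ acts within the coordinates indexed by $A$ or within those indexed by $B$ (a sign change $e_i$ preserves every coordinate line, and each long reflection lies in one block by assumption), so $c=\prod_k r_k$ preserves the decomposition of the signed index set into $\pm A$ and $\pm B$. This contradicts that $c$ is a single $2n$-cycle running through all of $\pm[n]$. Hence the $\ell$ edges connect all $n$ vertices, forcing $\ell\ge n-1$, i.e. $s\le 1$.

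Combining the two bounds yields $s=1$ and $\ell=n-1$: there is exactly one loop, and the $n-1$ edges form a connected spanning subgraph on $n$ vertices, which is necessarily a tree (so, in particular, the edges are simple, with no repeated chord). This is precisely a tree with a loop attached at one vertex, completing the proof. The main obstacle is pinning down the loop count exactly: the connectivity step is geometrically transparent, but upgrading ``$s$ odd'' to ``$s=1$'' requires pairing it with the parity homomorphism, whose well-definedness and value $\Psi(c)=1$ are the points needing the most care.
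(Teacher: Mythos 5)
Your proof is correct, but it takes a genuinely different and more self-contained route than the paper. The paper's argument is a two-line reduction: it asserts that every minimal factorization of a Coxeter element of $B_n$ contains exactly one reflection of the form $e_i$, observes that the remaining $n-1$ reflections project under folding to a minimal factorization of an $n$-cycle in $S_n$ (i.e.\ a type $A_{n-1}$ factorization), and then invokes the type $A$ fact that such a factorization forms a tree in the chord diagram, the single short reflection contributing the loop. You instead prove everything from first principles: the parity homomorphism $\Psi$ shows the number $s$ of sign-change reflections is odd; the connectivity argument (using that $c$ is a single $2n$-cycle on $\pm[n]$, so no proper nonempty subset $\pm A$ can be invariant) forces the long-reflection chords to connect all $n$ vertices, giving $n-s\ge n-1$, hence $s\le 1$; combining yields $s=1$ and exactly $n-1$ connecting edges, which is automatically a tree. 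What your approach buys: it supplies a proof of the ``exactly one sign change'' structure that the paper takes for granted, and it does not use the type $A$ tree property as a black box --- your counting-plus-connectivity step reproves it in the folded setting, and as a bonus rules out repeated chords (e.g.\ both $e_i-e_j$ and $e_i+e_j$ appearing). What the paper's approach buys: brevity and modularity --- once the type $A$ Goulden--Yong result is in hand, the type $B$ statement falls out of the decomposition of a $B_n$-factorization into one short reflection plus an $A_{n-1}$-factorization, which is the same decomposition driving the paper's later type $B$ constructions.
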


\begin{proof}
    Each factorization in $B_n$ contains a root of form $e_i$ and the rest of the roots form a factorization of $A_{n-1}$. The roots that form a factorization of $A_{n-1}$ will form a tree in the folded chord diagram and the root of form $e_i$ will form a loop.
\end{proof}

The other important property of type A chord diagrams is that the edges labels for the chords around each vertex are clockwise decreasing. Type B folded chord diagrams also have this property which isn't held by type B chord diagrams.

\begin{prop}
    The chords in a type B folded chord diagram can be drawn in a way such that the edge labels on the chord encountered when moving around a vertex clockwise across the interior of the circle, form a decreasing sequence of elements in $\{1,\dots,n\}$
\end{prop}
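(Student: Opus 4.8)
The plan is to reduce the statement to the analogous property for type $A$ chord diagrams, which is already available, by passing through the forgetful homomorphism $B_n \to S_n$ and then handling the unique loop separately. First I would recall from the previous proposition that a minimal factorization $(r_1,\dots,r_n)$ of the Coxeter element $c$ in $B_n$ contains exactly one short root, say $r_j = e_m$, which folds to a loop at the vertex $m$, while the remaining $n-1$ roots are long roots of the form $e_a \pm e_b$. Writing the $2n$-cycle of $c$ as $(k_1,\dots,k_n,-k_1,\dots,-k_n)$ via the antisymmetry $c(-x)=-c(x)$, the folded circle carries the $n$ distinct points $|k_1|,\dots,|k_n|$ placed clockwise.

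The heart of the reduction is the observation that the forgetful map $B_n \to S_n$, sending a signed permutation $w$ to the unsigned permutation $|w|$ with $|w|(i)=|w(i)|$, is a surjective group homomorphism with kernel $(\mathbb Z/2)^n$. Under it $c$ maps to the $n$-cycle $\bar c = (|k_1|\,\dots\,|k_n|)$, each long root $e_a \pm e_b$ maps to the transposition $(|a|,|b|)$, and the short root $e_m$ maps to the identity. Hence $\bar c = \prod_{k \neq j} \overline{r_k}$ expresses the $n$-cycle $\bar c$ as a product of $n-1$ transpositions, i.e. a minimal factorization of $\bar c$ in $S_n$ (type $A_{n-1}$). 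Its type $A$ chord diagram has the points $|k_1|,\dots,|k_n|$ placed clockwise and a chord between $|a|$ and $|b|$ for each $\overline{r_k}=(|a|,|b|)$, which is exactly the collection of non-loop chords of the folded type $B$ diagram.

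With this identification in hand I would invoke the type $A$ decreasing-label property for the factorization $(\overline{r_k})_{k\neq j}$: after deleting $r_j$ and relabeling the remaining roots $1,\dots,n-1$ in order, the type $A$ result supplies a drawing in which the labels decrease clockwise across the interior around each vertex. Since relabeling is order-preserving, the original indices $\{1,\dots,n\}\setminus\{j\}$ also decrease clockwise around each vertex, which establishes the claim for every chord except the loop. Finally I would insert the loop at vertex $m$: its label $j$ and the long-root labels meeting $m$ are distinct elements of $[n]$, so there is a unique slot in the decreasing sequence where $j$ belongs, and drawing the loop as an arbitrarily small arc hugging $m$ in the corresponding angular sector realizes that slot without crossing any chord.

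The main obstacle I expect is twofold and concentrated at the two ends of the argument. On the algebraic side, I must verify precisely that folding is a homomorphism producing a genuine minimal type $A$ factorization whose chord diagram coincides chord-for-chord with the non-loop folded chords (including matching the clockwise placement of $|k_1|,\dots,|k_n|$). On the geometric side, the loop is the genuinely new content beyond type $A$: I need to argue carefully that a loop can always be drawn in the required angular sector at $m$ without forcing a crossing, and that in the ``clockwise across the interior'' reading it contributes its label exactly once, so that inserting it at the bracketing position keeps the whole sequence decreasing.
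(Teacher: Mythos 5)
Your proposal is correct and follows essentially the same route as the paper's proof: the non-loop chords are identified with a minimal factorization in type $A_{n-1}$ (which the paper asserts directly and you justify more explicitly via the folding map $B_n \to S_n$), and the loop is then inserted into the unique angular sector at its vertex compatible with the clockwise-decreasing labels. The paper's two-case analysis of where to place the loop (bracketed by two incident chords, or bounded on one side only) is exactly your ``unique slot'' argument, so the two arguments coincide in substance.
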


\begin{proof}
    For type $B$ minimal factorizations, there's a single $r_k$ in the factorization such that $r_k(k_i)=-k_i$ which we draw it as a loop on $k_i$

    For reflections other than $r_k$ in the factorization, it forms a factorization of $A_{n-1}$ so they satisfy this property.
    
    For the reflection $r_k$ that corresponds to the loop at $k_i$, if there are reflections $r_i$ and $r_j$ in the factorization where $i < k < j$ such that $|r_i(k_i)| \neq k_i$, $|r_j(k_i)| \neq k_i$ and there's no reflection $r_l$ in the factorization such that $i < l < j$ and $|r_l(k_i)| \neq k_i$. Then draw the loop in the unique region bounded by the chords corresponds to $r_i$ and $r_j$

    The other cases for the reflection $r_k$ that corresponds to the loop at $k_i$ is when there's a reflection $r_i$ in the factorization where $i < k (\text{or} \ k < i)$ such that $|r_i(k_i)| \neq k_i$ and there's no reflection $r_l$ in the factorization such that $i < l < k (k < l < i)$ and $|r_i(k_i)| \neq k_i$. There are two regions that contains $r_i$ on the boundary. Pick the region such that the edge labels around the vertext $k_i$ are decreasing clockwise.
\end{proof}

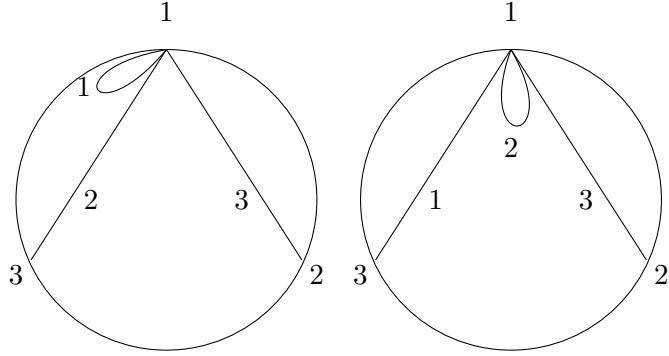
\begin{figure}
    \centering
    \begin{tikzpicture}[baseline={(0,0)}]
    % Draw a circle
    \draw (0,0) circle (2cm);
  
    % Add numbers
    \node at (0,2.5) {1};
    \node at (2,-1) {2};
    \node at (-2,-1) {3};
    \node at (-1.1, 1.5) {1};
    \node at (1,0) {3};
    \node at (-1,0) {2};
    
    % Connect points
    \draw (0,2) -- (1.8,-0.8);
    \draw (0,2) -- (-1.8,-0.8);
    \draw[scale=3] (0,0.666) to [out=190,in=230,loop] (0,0.666);
    \end{tikzpicture}
    \begin{tikzpicture}[baseline={(0,0)}]
    % Draw a circle
    \draw (0,0) circle (2cm);
  
    % Add numbers
    \node at (0,2.5) {1};
    \node at (2,-1) {2};
    \node at (-2,-1) {3};
    \node at (1,0) {3};
    \node at (-1,0) {1};
    \node at (0, 0.7) {2};
    
    % Connect points
    \draw (0,2) -- (1.8,-0.8);
    \draw (0,2) -- (-1.8,-0.8);
    \draw[scale=3] (0,0.666) to [out=250,in=300,loop] (0,0.666);
    \end{tikzpicture}
    \caption{The picture on the left is type B folded chord diagram for the factorization $(e_1,e_1-e_3,e_1-e_2)$ and the picture on the right is type B folded chord diagram for the factorization $(e_1+e_3,e_1,e_1-e_2)$}
\end{figure}

\begin{exa}
    Figure 10 shows two examples of type B folded chord diagrams. Both are factorizations of the Coxeter element of cycle type $(1,2,3,-1,-2,-3)$. 

    The picture on the left is for the factorization $(e_1,e_1-e_3,e_1-e_2)$. Since the loop has edge label 3 and the chord $C_{1,3}$ is the closest edge incident to 1. Hence, we draw the loop in the region bounded by $C_{1,3}$.

    The picture on the right is for the factorization $(e_1+e_3,e_1,e_1-e_2)$. Since the loop has edge label 2 and the chords $C_{1,2}$ and $C_{1,3}$ are the closest edges incident to 1. Hence, we draw the loop in the region bounded by $C_{1,3}$ and $C_{1,2}$
\end{exa}

Now we can define folded dual on the type B folded chord diagrams by drawing the reverse Garside duals on the folded chord diagrams.

\begin{defi}
    Given a factorization $(r_1,\dots,r_n)$ of $c$ with cycle type $(k_1 \dots k_{2n})$ where $k_i \in [n] \cup \{-[n]\}$ and $k_i \neq k_j$ and its reverse Garside dual $(r_1',\dots,r_n')$. \textbf{Type B Folded dual} is defined as follows:

    -Draw the folded chord diagram of $(r_1,\dots,r_n)$

    -Add a vertex $k_i$ on the arc between $k_i$ and ($k_{i+1}$ mod n) for $1 \leq 1 \leq n$

    -Draw a chord between $k_i$ and $k_j$ if there's a reflection $r_k'$ in the reverse Garside dual such that $|r_k'(k_i)|=|k_j|$.
\end{defi}

The following proposition shows that the folded dual constructed from reverse Garside dual can be constructed graphically just like constructing planar duals from chord diagrams.

\begin{prop}
    Type B folded dual is can be constructed from a type B folded chord diagram as follows:
    
    (i) Ignore the loop from the folded chord diagram and draw the planar dual of the tree.

    (ii) There's a unique vertex of the folded dual in the region of the loop of the folded chord diagram, say $v_i$. Draw loop of the folded dual by drawing a loop from $v_i$ into the region bounded by the loop in the folded chord diagram and back to $v_i$
\end{prop}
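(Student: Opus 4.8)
The plan is to exploit the sign-forgetting homomorphism $\pi\colon B_n\to S_n$, which sends a signed permutation to the underlying permutation of $\{1,\dots,n\}$. Under $\pi$ a chord reflection $e_a\pm e_b$ maps to the transposition $(a\,b)$, while the loop reflection $e_a$ maps to the identity. Thus the folded chord diagram records exactly $\pi$ of the reflections, and the folded dual records $\pi$ of the reverse Garside dual reflections $r_k'$. Since $\pi$ is a homomorphism,
\[
\pi(r_k')=\pi(r_1)\cdots\pi(r_{k-1})\,\pi(r_k)\,\pi(r_{k-1})^{-1}\cdots\pi(r_1)^{-1},
\]
and the factor coming from the unique loop reflection $r_m=e_i$ disappears because $\pi(e_i)=\mathrm{id}$.

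For part (i), let $(r_{j_1},\dots,r_{j_{n-1}})$ be the subsequence of chord reflections. Applying $\pi$ yields a minimal factorization of the $n$-cycle $\pi(c)$ in $S_n$ whose type $A$ chord diagram is precisely the folded tree. For any chord reflection $r_k$ the displayed identity shows that $\pi(r_k')$ equals the type $A$ reverse Garside dual of this factorization, the deleted loop factor contributing nothing. Theorem \ref{thm C} applied in type $A$ then identifies these reflections with the adjacencies of the planar dual of the folded tree. Hence the chords of the folded dual coming from chord reflections are exactly the planar dual obtained by ignoring the loop, which is statement (i).

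For part (ii) I would first compute the reverse Garside dual of the loop reflection directly. Writing $P=r_1\cdots r_{m-1}$, conjugation of the sign change $e_i$ by the signed permutation $P$ is again a sign change, $r_m'=P\,e_i\,P^{-1}=e_{|P(i)|}$; in particular $\pi(r_m')=\mathrm{id}$, so $r_m'$ is the only reverse-Garside-dual reflection folding to a loop rather than to a chord, confirming that the folded dual is a tree together with one loop. Uniqueness of the base vertex is then immediate: after deleting the loop the folded tree is a non-crossing tree on $n$ boundary points, so it cuts the disk into $n$ regions each containing a single arc, hence a single folded dual vertex; in particular the region in which the loop of the folded chord diagram sits contains a unique folded dual vertex $v_i$.

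It remains to identify $v_i$ as the base of the folded-dual loop, and this is the step I expect to be the main obstacle. I would pass to the unfolded type $B$ chord diagram, which carries the central symmetry $x\mapsto -x$ whose quotient is the folding and under which the loop reflection is the diameter $C_{i,-i}$. Its reverse-Garside-dual chord is the diameter $C_{|P(i)|,\,-|P(i)|}$, and by Theorem \ref{thm C} this records the adjacency of $reg(|P(i)|)$ and $reg(-|P(i)|)$. These two regions are interchanged by the central symmetry, so they are glued in the fold and the dual chord becomes a self-loop based at the folded image of $reg(|P(i)|)$, namely at the folded dual vertex lying on an arc incident to the point $|P(i)|$. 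The remaining work is to check that this vertex is precisely the $v_i$ singled out above, i.e. that the arc incident to $|P(i)|$ surviving in the fold is the one contained in the loop's region. I would establish this by tracking the partial product $r_1\cdots r_{m-1}$ against the clockwise-decreasing edge-label convention used to place the loop in the preceding proposition, via a short case analysis according to whether the chords at vertex $i$ adjacent to $r_m$ in the factorization precede or follow it.
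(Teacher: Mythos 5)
Your part (i) is sound, and in fact more explicit than the paper's own treatment: the paper simply observes that the tree part corresponds to a factorization of $A_{n-1}$, while you additionally verify that the sign-forgetting homomorphism $\pi$ intertwines the reverse Garside duals (the loop factor conjugates away since $\pi(e_i)=\mathrm{id}$), so that Theorem \ref{thm C} applied in type $A$ identifies the chord part of the folded dual with the planar dual of the folded tree. The computation $r_m'=P\,e_i\,P^{-1}=e_{|P(i)|}$, the conclusion that the folded dual has exactly one loop, and the uniqueness of the folded-dual vertex in the loop's region are all correct as well.

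However, part (ii) has a genuine gap, and it is exactly the step you flag yourself: you never prove that the base vertex of the folded-dual loop, i.e.\ the folded image of $reg(|P(i)|)$, is the vertex $v_i$ lying in the region bounded by the loop of the folded chord diagram. That identification is the entire content of statement (ii); everything you establish before it is preliminary bookkeeping, and deferring the identification to ``a short case analysis'' that is not carried out leaves the proposition unproved. For comparison, the paper closes precisely this step: it takes the chords $r_i,r_j$ bounding the loop's region in the folded chord diagram, notes that the dual loop $r_k'$ lies in the region bounded by the dual chords $r_i',r_j'$, and then invokes the edge-label conventions --- labels clockwise decreasing around each vertex for the original factorization, clockwise increasing for the dual (a factorization of $c^{-1}$) --- to conclude that both loops lie in the common region bounded by $r_i,r_j,r_i',r_j'$, with a one-sided variant when the loop's region is bounded by a single chord. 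Some argument of this kind, pinning down the position of $e_{|P(i)|}$ relative to the chords incident to the loop vertex according to whether they precede or follow $r_m$ in the factorization, is what your proposal still needs before it constitutes a proof.
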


\begin{proof}
    Since the tree part of the folded chord diagram corresponds to a factorization of $A_{n-1}$, (i) follows.

    For (ii), suppose we have a minimal factorization $(r_1, \dots,r_i,\dots,r_k, \dots, r_j, \dots, r_n)$ of $c$ where $r_k$ corresponds to the loop in the folded chord diagram and the loop lies in the region bounded by chords corresponding to $r_i$ and $r_j$.

    Consider its dual $(r_1',\dots,r_i',\dots,r_k', \dots, r_j', \dots, r_n')$. $r_k'$ corresponds to the loop in the folded dual and lies in the region bounded by $r_i'$ and $r_j'$.

    Since the dual is a factorization of $c^{-1}$, the edge labels around a vertex is clockwise increasing. Therefore, loops corresponding to $r_k$ and $r_k'$ lies in the region bounded by $r_i, r_j, r_i', r_j'$.

    A similar argument can be made when $r_k$ is only bounded one-side.

    Therefore, the loop of the folded dual lies in the region of the folded chord diagram.

\end{proof}

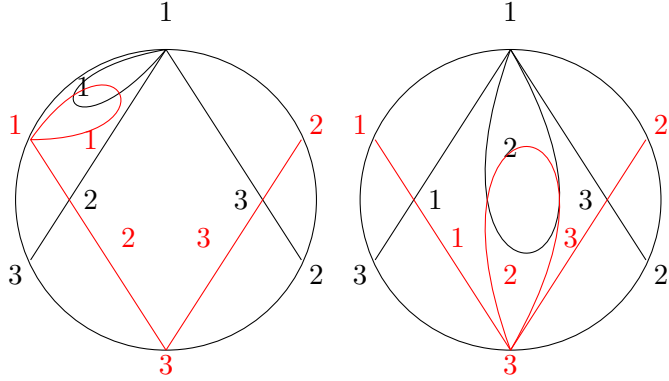
\begin{figure}
    \centering
    \begin{tikzpicture}[baseline={(0,0)}]
    % Draw a circle
    \draw (0,0) circle (2cm);
  
    % Add numbers
    \node at (0,2.5) {1};
    \node at (2,-1) {2};
    \node at (-2,-1) {3};
    \node at (-1.1, 1.5) {1};
    \node at (1,0) {3};
    \node at (-1,0) {2};
    \node[red] at (2,1) {2};
    \node[red] at (0,-2.2) {3};
    \node[red] at (-2, 1) {1};
    \node[red] at (-1,0.8) {1};
    \node[red] at (-0.5,-0.5) {2};
    \node[red] at (0.5, -0.5) {3};
    
    % Connect points
    \draw (0,2) -- (1.8,-0.8);
    \draw (0,2) -- (-1.8,-0.8);
    \draw[scale=4] (0,0.5) to [out=190,in=230,loop] (0,0.5);
    \draw[red] (1.8,0.8) -- (0,-2);
    \draw[red] (-1.8, 0.8) -- (0, -2);
    \draw[scale=4, red] (-0.45,0.2) to [out=0,in=55,loop] (-0.45,0.2);
    \end{tikzpicture}
    \begin{tikzpicture}[baseline={(0,0)}]
    % Draw a circle
    \draw (0,0) circle (2cm);
  
    % Add numbers
    \node at (0,2.5) {1};
    \node at (2,-1) {2};
    \node at (-2,-1) {3};
    \node at (1,0) {3};
    \node at (-1,0) {1};
    \node at (0, 0.7) {2};
    \node[red] at (2,1) {2};
    \node[red] at (0,-2.2) {3};
    \node[red] at (-2, 1) {1};
    \node[red] at (0,-1) {2};
    \node[red] at (-0.7,-0.5) {1};
    \node[red] at (0.8, -0.5) {3};
    
    % Connect points
    \draw (0,2) -- (1.8,-0.8);
    \draw (0,2) -- (-1.8,-0.8);
    \draw[scale=8] (0,0.25) to [out=250,in=300,loop] (0,0.25);
    \draw[red] (1.8,0.8) -- (0,-2);
    \draw[red] (-1.8, 0.8) -- (0, -2);
    \draw[scale=8, red] (0,-0.25) to [out=-250,in=-300,loop] (0,-0.25);
    \end{tikzpicture}
    \caption{The picture on the left is type B folded chord diagram for the factorization $(e_1,e_1-e_3,e_1-e_2)$ and the picture on the right is type B folded chord diagram for the factorization $(e_1+e_3,e_1,e_1-e_2)$}
\end{figure}

\subsubsection{Type D folded chord diagram}
We will define the type D folded chord diagrams and their folded duals in this section.

\begin{defi}
Given a Coxeter element of $D_n$ of cycle type $(k_1 \dots k_{2n-2})(1,-1)$ where $k_i \in \{\pm2, \pm3, \dots, \pm n\}$ and a minimal factorization $(r_1,\dots,r_n)$. 

\textbf{Type D folded chord diagrams} contain two circles where one circle contains the other. $|k_i|$ are put on the outside circle clockwise for $1 \leq i \leq {n-1}$ and $\{1\}$ is put on the inside counterclockwise.

For $a,b \in [n]$. Draw a chord between $a$ and $b$ if there's a reflection $r_k$ in the factorization such that $|r_k(k_i)|=|k_j|$.
\end{defi}

\begin{rema}
    Here we take $1$ to be in the short cycle. In general, any number can be in the short cycle.
\end{rema}

For type D, when we draw factorizations in the folded chord diagrams, we actually lost information on factorizations which results in one folded chord diagram corresponding to two factorizations.

\begin{lemma}
    Each type D folded chord diagram corresponds to two minimal factorizations of a given Coxeter element.
\end{lemma}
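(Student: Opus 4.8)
The plan is to identify the fibre of the folding map with a set of \emph{sign choices}, produce the lower bound of two by an explicit fixed-point-free involution, and obtain the matching upper bound from the signed-graph structure of the diagram. For the setup, observe that a type $D$ folded chord diagram records, for each position $k$, only the unordered pair $\{|a_k|,|b_k|\}$ joined by the chord $r_k$; the single datum destroyed by folding is whether $r_k=e_{a_k}-e_{b_k}$ or $r_k=e_{a_k}+e_{b_k}$. Thus a preimage of a fixed diagram is exactly a sign vector $\epsilon\in\{+,-\}^n$ whose reflections $r_k^{\epsilon}$ have ordered product equal to the fixed Coxeter element $c$. I would view the diagram as a graph $G$ on the vertex set $[n]$, with vertex $1$ the short-cycle vertex; $G$ is connected because the reflections of a minimal factorization generate $D_n$, which acts transitively on the coordinates, and since $G$ has $n$ vertices and $n$ edges it carries a unique independent cycle $Z$.

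For the lower bound I would use conjugation by the sign change $s_1\colon e_1\mapsto -e_1$. Because coordinate $1$ lies in the $s_1$-symmetric short cycle $(1,-1)$ and does not occur in the long cycle, one checks $s_1 c s_1=c$, so $f\mapsto s_1 f s_1$ sends factorizations of $c$ to factorizations of $c$. It fixes every chord's absolute values, hence preserves the folded diagram, and it squares to the identity. It is fixed-point free, since some $r_k$ must involve coordinate $1$ in order to realize $1\mapsto -1$, and $s_1$ flips the sign of precisely that reflection. Hence each diagram has at least two preimages, and these two are the explicit pair swapped by $1\leftrightarrow -1$.

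For the upper bound I would pass to the gain-graph picture: a sign vector is a $\mathbb Z/2$-gain on $G$, conjugation by a coordinate sign change $s_V$ is switching at the vertex set $V$, and switching transforms the product by $\prod_k r_k^{\mathrm{switch}_V\epsilon}=s_V\big(\prod_k r_k^{\epsilon}\big)s_V^{-1}$. Two facts then close the count. First, by Zaslavsky's description of the signed-graphic matroid (equivalently, an elementary telescoping along $Z$), the roots $\{r_k^{\epsilon}\}$ are linearly independent if and only if $Z$ is unbalanced; since $c$ has reflection length $n$, any $\epsilon$ with $\prod_k r_k^{\epsilon}=c$ forces $Z$ unbalanced, so all solutions lie in the single unbalanced switching class. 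Second, computing the centralizer of $c$ among coordinate sign changes, $s_V$ commutes with $c$ if and only if $V\cap\{2,\dots,n\}$ is empty or all of $\{2,\dots,n\}$ with coordinate $1$ free; modulo the central total sign change this leaves exactly $\{\mathrm{id},s_1\}$. Within the unbalanced class any two solutions differ by a switching whose $s_V$ centralizes $c$, so the solution set is a single $\{\mathrm{id},s_1\}$-orbit, and since switching at $1$ acts nontrivially (vertex $1$ has an incident edge) this orbit has exactly two elements. Combined with the lower bound, the fibre has size exactly two.

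The main obstacle is the first fact above: cleanly establishing that independence of the signed roots is equivalent to the unique cycle $Z$ being unbalanced, and thereby that the two switching classes of $G$ are separated by reducedness, so that the balanced class can never yield a factorization of the reduced element $c$. The centralizer computation and the connectivity of $G$ are routine once the cycle structure of the $D_n$ Coxeter element is written out, and the involution is the clean conceptual input that both furnishes the lower bound and exhibits the two factorizations concretely.
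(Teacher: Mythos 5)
Your proof is correct, and it lands on the same concrete description of the fibre as the paper --- the two factorizations differ by flipping the signs of exactly the chords incident to vertex $1$ --- but it gets there by a genuinely different route. The paper argues locally and directly: signs of chords not incident to $1$ are forced because their endpoints lie in the long cycle of $c$, the chords incident to $1$ carry one collective binary choice, and this choice is normalized by the sign of the largest-labeled chord at vertex $1$ (the normalization that feeds into the 2-to-1 correspondence of Theorem~\ref{thm E}). You instead argue globally: the lower bound comes from the fixed-point-free involution given by conjugation with the sign change $s_1$ (legitimate because $1$ sits in the short cycle, so $s_1 c s_1 = c$), and the upper bound comes from reading sign choices as $\mathbb{Z}/2$-gains on the diagram, using Carter's lemma (reflection length $n$ forces linearly independent roots) to push every solution into the single unbalanced switching class, and then computing that the only switchings centralizing $c$ are those at $V \in \{\emptyset, \{1\}, \{2,\dots,n\}, [n]\}$, i.e.\ the identity and switching at $1$. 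Your approach is more rigorous precisely where the paper is thinnest: the paper never verifies that flipping all signs at vertex $1$ still yields a factorization of the \emph{same} Coxeter element (your identity $s_1 c s_1 = c$ does this), and its assertion that the remaining signs are ``uniquely determined'' is exactly the content of your switching-class and centralizer computation, which the paper states without proof. What the paper's route buys in exchange is elementarity (no matroid or gain-graph input) and an explicit distinguishing datum for the two preimages, which is convenient downstream. One small simplification available to you: connectivity of $G$ need not go through ``the reflections generate $D_n$''; any union of components of $G$ is $|c|$-invariant, the only proper invariant subsets are $\{1\}$ and $\{2,\dots,n\}$, and an edgeless component $\{1\}$ would contradict $c(1) = -1$.
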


\begin{proof}
    Each chord corresponds to reflections either in the form of $e_i+e_j$ or $e_i-e_j$. If a reflection is in the form $e_i+e_j$, we say it has $-$ sign. If a reflection is in the form $e_i-e_j$, we say it has $+$ sign

    For chords not incident to 1, their signs are uniquely determined since the vertices other than 1 are in the same cycle in the cycle type of type D Coxeter elements.

    For chords incident to 1, their signs are determined by a choice of signs for one of its incident chords. If we choose a different sign of one of the chords incident to 1, signs of all the other chords incident to 1 need to change as well.

    Here, we can choose the chord that has the biggest edge label among all edges incident to 1.

    Therefore, the signs of the reflections in a factorization is determined by the sign of the chord that has the biggest edge label among all edges incident to 1 and each type D folded chord diagram corresponds to two minimal factorizations where the chosen edge has either $+$ or $-$ sign.
\end{proof}

Similar as before, the chords in type D folded chord diagrams form certain graphs.

\begin{prop}
    The chords in a type D folded chord diagram form a spanning graph on n vertices with a single cycle of size at least 2 that contains the vertex 1.
\end{prop}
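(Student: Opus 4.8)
\emph{The plan} is to establish four facts about the folded graph $G$: it has $n$ vertices and $n$ edges, it is connected, it therefore carries a unique cycle, and that cycle passes through the vertex $1$. First I would pin down the counts. Every reflection of $D_n$ is $e_i-e_j$ or $e_i+e_j$, and folding ($\pm k\mapsto |k|$) sends either one to a single chord on the pair $\{|i|,|j|\}$, so each of the $n$ reflections $r_1,\dots,r_n$ gives exactly one folded chord; the only way two reflections share the same endpoints is the pair $\{e_i-e_j,\,e_i+e_j\}$, which folds to a double edge. Since $c$ is a Coxeter element, $\mathrm{fix}(c)=0$, so in the reduced word $r_1\cdots r_n=c$ the roots $\alpha_1,\dots,\alpha_n$ are linearly independent and form a basis of $\mathbb R^n$; in particular the $r_i$ are distinct and $G$ has $n$ distinct edges on its $n$ vertices $\{1,\dots,n\}$. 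Calling $e_i-e_j$ a positive edge and $e_i+e_j$ a negative edge, and a cycle \emph{unbalanced} when it uses an odd number of negative edges, linear independence of these incidence vectors is equivalent to the condition that every connected component of $G$ contains at most one cycle and any such cycle is unbalanced. Combined with the equality (number of edges) $=$ (number of vertices), summing $(\text{edges}-\text{vertices})$ over components forces each component to be exactly unicyclic with an unbalanced cycle; there are no loops, since a loop would require a root supported on a single coordinate, which $D_n$ has none.

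Next I would show $G$ is connected, which promotes ``one cycle per component'' to ``a single cycle''. Since $\mathrm{fix}(c)=0$, the parabolic closure of $c$ is all of $D_n$, and the reflections of a reduced factorization generate this closure, so $\langle r_1,\dots,r_n\rangle=D_n$, which is irreducible. If $G$ were disconnected with vertex classes $V_1\sqcup V_2$, then each $r_i$ would be supported on $\pm V_1$ or on $\pm V_2$, placing $\langle r_1,\dots,r_n\rangle$ inside the reducible group $W_{V_1}\times W_{V_2}$, a contradiction; equivalently, a disconnection would force $c$ to preserve the partition $\pm V_1\sqcup \pm V_2$, which is impossible because the long cycle of $c$ is a single cycle on all of $\{\pm2,\dots,\pm n\}$ and the coordinate $1$ cannot be stranded on its own (no reflection is supported there). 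Hence $G$ is connected with exactly one cycle $Z$, which is unbalanced of size at least $2$.

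It remains to prove $1\in Z$, equivalently that the subgraph induced on $\{2,\dots,n\}$ is a forest, for any cycle lying in $\{2,\dots,n\}$ would be $Z$ itself. The natural reformulation is via switching: as $G$ is connected with a single unbalanced cycle, I can flip signs at vertices (conjugate by a diagonal sign matrix $\varepsilon$) so that all edges become positive except one negative edge $f$ lying on $Z$; then $c'=\varepsilon c\varepsilon^{-1}$ is a product of $n-1$ transpositions and one reflection $e_p+e_q$ with $\{p,q\}=f$, and conjugation by $\varepsilon$ preserves the support and sign of each cycle, so $c'$ still has its sign-reversed fixed coordinate at $1$ and still has two negative cycles. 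Every sign reversal of $c'$ emanates from the single reflection $e_p+e_q$, which reverses exactly the two coordinates occupying positions $p$ and $q$ at the instant it is applied. Because $c'$ must retain two negative cycles, these two reversed coordinates lie in distinct cycles of the underlying permutation, one of them being the negative fixed point $1$. Thus the trajectory of coordinate $1$ must pass through an endpoint of $f$, so in the spanning tree $G-f$ the vertex $1$ is joined to $\{p,q\}$; coordinate $1$ is therefore tied to the unique cycle $Z$.

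The main obstacle is exactly the last upgrade: from ``$1$ is joined to an endpoint of $f$'' to ``$1$ lies on $Z$''. A priori the trajectory of $1$ could leave and return along the same tree branch, leaving $1$ on a pendant branch hanging off $Z$ rather than on $Z$, and ruling this out is where the sign/trajectory bookkeeping alone is insufficient. I expect two viable completions. The inductive route peels a leaf of $G$ (a degree-one vertex, corresponding to a reflection whose removal yields a minimal factorization of a Coxeter element of $D_{n-1}$, or, when the leaf sits at vertex $1$, relates the long cycle to a Coxeter element of $B_{n-1}$ on $\{2,\dots,n\}$, to which the type-$B$ ``tree-with-loop'' structure already proved applies) and tracks that the special coordinate stays on the cycle. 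The planar route instead uses that the chords of a minimal factorization embed without crossings in the annular diagram: a cycle $Z$ avoiding the inner vertex $1$ would bound a region disjoint from the inner circle carrying $1$, which one checks is incompatible with the product having its short cycle at coordinate $1$. I would develop the inductive route first, since it parallels the type-$B$ argument and keeps the sign bookkeeping local to the peeled reflection.
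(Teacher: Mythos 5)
Your edge-count, unicyclicity, and connectivity arguments are correct, and they take a genuinely different (and considerably more detailed) route than the paper's own proof, which merely observes that $n$ edges on $n$ vertices force a cycle, that the absence of roots of the form $e_i$ in $D_n$ rules out loops, and then asserts that the remaining chords form a forest attached to the cycle, saying nothing explicit about connectedness or about why the cycle passes through the vertex $1$. Your route through Carter's lemma, the signed-graphic characterization of linear independence (no balanced cycle, at most one cycle per component, cycles unbalanced), and irreducibility of $D_n$ for connectedness is a valid and self-contained way to establish that the folded graph is connected and unicyclic with an unbalanced cycle of size at least $2$.

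However, there is a genuine gap, and it is the one you flag yourself: you never prove that the vertex $1$ lies on the cycle $Z$, which is the only assertion in the proposition specific to the location of the short cycle of $c$. The statement ``vertex $1$ is joined to an endpoint of $f$ in the spanning tree $G-f$'' is vacuous (a spanning tree joins everything to everything), and neither of your two proposed completions is carried out, so the proposal as written does not prove the proposition. What is frustrating is that the bookkeeping you already set up suffices, with no switching normalization needed. The trajectory of coordinate $1$ --- the successive positions of $e_1$ as $r_n, r_{n-1},\dots,r_1$ are applied --- is a walk in the underlying graph $G$ that starts at vertex $1$ and ends at vertex $1$ (because $c(e_1)=-e_1$), and it traverses each edge of $G$ \emph{at most once}, because each reflection occurs exactly once in the factorization and moves the trajectory only when the trajectory sits at one of its endpoints. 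A closed walk crosses every bridge of $G$ an even number of times, hence this walk crosses every bridge zero times; in a connected unicyclic graph the bridges are exactly the edges not on $Z$, so the walk uses only edges of $Z$. The walk is nonempty (otherwise every reflection would fix $e_1$ and $c(e_1)=e_1$), so some edge of $Z$ is incident to vertex $1$, i.e.\ $1\in Z$. This is exactly what kills the ``leave and return along the same branch'' scenario you were worried about: returning along a pendant branch would force some bridge to be crossed twice, which the multiplicity-one property of the reflections forbids.
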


\begin{proof}
    Since the chords form a graph of $n$ edges on $n$ vertices, the graph must contain a cycle.

    The cycle is of size at least $2$ since there's no reflection of the form $e_i$ in $D_n$.

    The rest of the chords correspond to a forest attached to the cycle.
\end{proof}

Similar as before, type D folded chord diagrams also have the property that edges labels around a vertex are clockwise decreasing whereas type D chord diagrams don't have this property.

\begin{prop}
    The chords of a folded type D chord diagram can be drawn in a way such that the edge labels on the chord encountered when moving around a vertex clockwise across the interior of the circle, form a decreasing sequence of elements in $\{1,\dots,n\}$
\end{prop}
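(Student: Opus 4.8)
The plan is to imitate the proof of the type $B$ statement, treating the cycle through the distinguished vertex $1$ as the analogue of the type $B$ loop. By the preceding proposition the chords form a connected spanning graph on $\{1,\dots,n\}$ with exactly one cycle $Z$, and $Z$ passes through the inner vertex $1$; since a cycle meets each of its vertices in exactly two edges, two chords of $Z$ are incident to vertex $1$. I would first record the analogue of the type $B$ count: exactly two reflections of the factorization move the index $1$, these two chords are the two edges of $Z$ at vertex $1$, and the remaining $n-2$ reflections join points of $\{2,\dots,n\}$ only. Deleting vertex $1$ then opens $Z$ into a path, so the $n-2$ outer chords form a tree on the $n-1$ outer points, i.e.\ a type $A_{n-2}$ configuration.

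First I would dispose of everything that does not touch vertex $1$. By the lemma identifying the two factorizations, the outer chords have uniquely determined signs, and as the folding of the long cycle $(k_1\cdots k_{2n-2})$ onto the $n-1$ points $\{2,\dots,n\}$ they behave exactly like a type $A$ factorization. Hence, by the type $A$ result already invoked for type $B$, they can be drawn so that the labels are clockwise decreasing around every outer vertex. Because deleting a chord never changes the relative clockwise order of the remaining chords around a vertex, this property survives when the two chords at vertex $1$ are reinstated, for every vertex and every chord not incident to $1$.

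The remaining task, which I expect to be the crux, is to reinsert the two chords at the interior vertex $1$ and reconcile them with the orderings already fixed at their outer endpoints. Here I would use two ingredients. Around vertex $1$ itself there are only two chords, and because $1$ is the sole point on the inner circle the two chords may be routed through the annulus in either cyclic order, so we may simply place the larger label first when sweeping clockwise. At each outer endpoint the newly inserted chord to vertex $1$ must be slotted into an already decreasing sequence of labels; to see this is possible I would track, through the trail / reverse-Garside description of the chords, that the chord running to vertex $1$ carries exactly the label that forces it into the clockwise-extreme position at its outer endpoint, just as the type $B$ loop was pinned into an extreme region. Since passing between the two factorizations that share this folded diagram changes only the signs of the index-$1$ reflections and not the pairs of vertices they join, the resulting drawing is valid simultaneously for both.

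The genuinely delicate point is the step that closes the cycle $Z$. Unlike a forest edge, whose insertion is always clockwise-extremal and hence a purely local matter, closing $Z$ constrains the two index-$1$ chords jointly, so a local insertion argument alone is not enough. I would resolve this by building the factorization one reflection at a time, as in type $A$, and checking that the invariant ``the labels are clockwise decreasing around every vertex'' is preserved precisely at the unique step at which $Z$ is completed, this being the only step at which a chord is inserted other than at an extreme position.
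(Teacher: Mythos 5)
Your argument rests on a structural claim that is false: that exactly two reflections of the factorization move the index $1$ (the two edges of $Z$ at vertex $1$), so that the $n-2$ chords avoiding vertex $1$ form a spanning tree of the outer points, i.e.\ a type $A$ configuration. The preceding proposition does guarantee that the unique cycle $Z$ passes through vertex $1$, but it does not bound the degree of vertex $1$: besides its two cycle edges, vertex $1$ may also carry tree edges. For a concrete counterexample in $D_4$, take
\[
(r_1,r_2,r_3,r_4)=(e_1-e_2,\; e_1-e_3,\; e_1+e_2,\; e_3-e_4).
\]
The product is a signed permutation with cycles $(1,-1)$ and a single $6$-cycle on $\{\pm 2,\pm 3,\pm 4\}$, hence a Coxeter element of $D_4$ whose short cycle is $(1,-1)$, and the factorization is minimal (four reflections). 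Here three reflections move the index $1$: the cycle $Z$ is the doubled chord $\{1,2\}$ (labels $1$ and $3$), while the chords $\{1,3\}$ (label $2$) and $\{3,4\}$ (label $4$) form a forest, with $\{1,3\}$ attached at vertex $1$. Deleting vertex $1$ leaves the single chord $\{3,4\}$ on three outer points, which is not a spanning tree, so there is no type $A_{n-2}$ factorization to which your first step can appeal; and your later step that ``around vertex $1$ there are only two chords, so route them in either cyclic order'' also fails, since vertex $1$ can have more than two incident chords.

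The viable decomposition --- and the one the paper uses --- is not ``chords at vertex $1$ versus the rest'' but ``cycle chords versus forest chords.'' The forest chords, including those incident to vertex $1$, are the part that behaves like type $A$; the two cycle chords at each cycle vertex $v$ are then inserted using the key observation that no reflection lying strictly between them in the factorization order can move $v$ (otherwise the product would fail to contain the short cycle $(1,-1)$), so the two cycle chords are angularly consecutive at $v$ and can be placed in a single region in clockwise decreasing order; the two cycle chords at vertex $1$ are placed last. Your closing paragraph, which defers the crux (the step closing $Z$) to an inductive invariant to be checked later, is in any case a plan rather than a proof, but the false degree-two claim is what breaks the approach: any correct argument must accommodate tree edges at vertex $1$, which your reduction assumes away.
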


\begin{proof}
    The chords outside the cycle can be drawn such that edge labels around a vertex are clockwise decreasing since they correspond to minimal factorization of type $A$.

    For a vertex labeled $k_i \neq 1$ that is contained in the cycle other than the vertex 1, there are exactly two chords in the cycle adjacent to $v_i$, say $C_{r_i}$ and $C_{r_j}$ where $i < j$. 

    There's no $r_k$ where $i < k < j$ and $r_k(i) \neq (i)$. Otherwise, the corresponding Coxeter element doesn't contain the short cycle $(1,-1)$

    Therefore, we can find $r_l$ and $r_m$ in the factorization such that $l < i < j < m$ and $r_l(i) \neq i$, $r_m(i) \neq i$.

    If there are reflections $r_l$ and $r_m$ in the factorization where $l < i < j < m$ such that $r_l(k_i) \neq k_i$, $r_m(k_i) \neq k_i$ and there's no reflection $r_p$ in the factorization such that $l < p < m$ and $r_l(k_i) \neq k_i$. Then draw the $r_i$ and $r_j$ clockwise decreasingly in the unique region bounded by  $C_{r_l}$ and $C_{r_m}$

    The other cases are when there's a reflection $r_i$ in the factorization where $l < i < j (\text{or} \ i < j < l)$ such that $r_l(k_i) \neq k_i$ and there's no reflection $r_m$ in the factorization such that $l < m (m < l )$ and $r_l(k_i) \neq k_i$. There are two regions that contain $r_i$ on the boundary. Pick the region such that the edge labels around the vertex $k_i$ are decreasing clockwise.

    For the two chords inside the cycle that's adjacent to 1, draw each chord in the region such that the edge labels are clockwise decreasing.

    This drawing is unique up to homeomorphisms fixing the boundary circles since homeomorphisms won't change the edge label being clockwise decreasing.
\end{proof}

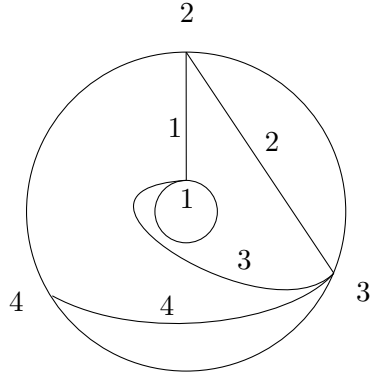
\begin{figure}
    \centering
    \begin{tikzpicture}[x=0.75pt,y=0.75pt,yscale=-1,xscale=1]
%uncomment if require: \path (0,300); %set diagram left start at 0, and has height of 300

%Shape: Circle [id:dp044498739993663605] 
\draw   (251,154.5) .. controls (251,110.04) and (287.04,74) .. (331.5,74) .. controls (375.96,74) and (412,110.04) .. (412,154.5) .. controls (412,198.96) and (375.96,235) .. (331.5,235) .. controls (287.04,235) and (251,198.96) .. (251,154.5) -- cycle ;
%Shape: Circle [id:dp01567476845611937] 
\draw   (315.75,154.43) .. controls (315.79,145.73) and (322.87,138.71) .. (331.57,138.75) .. controls (340.27,138.79) and (347.29,145.87) .. (347.25,154.57) .. controls (347.21,163.27) and (340.13,170.29) .. (331.43,170.25) .. controls (322.73,170.21) and (315.71,163.13) .. (315.75,154.43) -- cycle ;
%Straight Lines [id:da6205774755164082] 
\draw    (331.5,74) -- (406,185.6) ;
%Curve Lines [id:da47051139316863777] 
\draw    (331.57,138.75) .. controls (252,143.6) and (373,218.6) .. (406,185.6) ;
%Straight Lines [id:da3215665095961153] 
\draw    (331.5,74) -- (331.57,138.75) ;
%Curve Lines [id:da32424511108760923] 
\draw    (264,197) .. controls (309,221.6) and (384,211.2) .. (406,185.6) ;

% Text Node
\draw (327,142) node [anchor=north west][inner sep=0.75pt]   [align=left] {1};
% Text Node
\draw (327,48) node [anchor=north west][inner sep=0.75pt]   [align=left] {2};
% Text Node
\draw (416,189) node [anchor=north west][inner sep=0.75pt]   [align=left] {3};
% Text Node
\draw (241,194) node [anchor=north west][inner sep=0.75pt]   [align=left] {4};
% Text Node
\draw (370,113) node [anchor=north west][inner sep=0.75pt]   [align=left] {2};
% Text Node
\draw (356,173) node [anchor=north west][inner sep=0.75pt]   [align=left] {3};
% Text Node
\draw (321,106) node [anchor=north west][inner sep=0.75pt]   [align=left] {1};
% Text Node
\draw (317,196) node [anchor=north west][inner sep=0.75pt]   [align=left] {4};
\end{tikzpicture}
\caption{Folded chord diagram for factorizations $(e_1+e_2,e_2-e_3,e_1-e_3,e_3-e_4)$ and $(e_1-e_2,e_2-e_3,e_1+e_3,e_3-e_4)$ of Coxeter element of cycle type (2,3,4,-2,-3,-4)(1,-1)}
\end{figure}

\begin{exa}
    Figure 12 is the folded chord diagram for minimal factorizations $(e_1+e_2,e_2-e_3,e_1-e_3,e_3-e_4)$ and $(e_1-e_2,e_2-e_3,e_1+e_3,e_3-e_4)$ of Coxeter element of cycle type (2,3,4,-2,-3,-4)(1,-1). The sign of the chord labeled 3 determines which factorization it represents. If it has $+$ sign, then the folded chord diagram represents the factorization $(e_1+e_2,e_2-e_3,e_1-e_3,e_3-e_4)$. If it has $-$ sign, then the folded chord diagram represents the factorization $(e_1-e_2,e_2-e_3,e_1+e_3,e_3-e_4)$

    Fixing the sign, this drawing of the folded chord diagram such that each vertex has edge labels clockwise decreasing it unique up to homeomorphism fixing the boundary circles.
\end{exa}

Now we can define folded duals on type D folded chord diagrams.

\begin{defi}
    Given a factorization $(r_1,\dots,r_n)$ of $c$ with cycle type $(k_1, \dots k_{2n-2})(1, -1)$ where $k_i \in \{\pm2, \pm3, \dots, \pm n\}$ and $k_i \neq k_j$ and its reverse Garside dual $(r_1',\dots,r_n')$. \textbf{Folded dual} is defined as follows:

    -Draw the folded chord diagram of $(r_1,\dots,r_n)$

    -Put a vertex $k_i$ on the arc between $k_i$ and ($k_{i+1}$ mod n) for $1 \leq 1 \leq n-1$ on the outside circle

    -Put a vertex $1$ on the inside circle

    -Draw a chord between $k_i$ and $k_j$ if there's a reflection $r_k'$ in the reverse Garside dual such that $|r_k'(k_i)|=|k_j|$ for $k_i,k_j \in [n]$
\end{defi}

The following proposition shows that the folded dual constructed from reverse Garside dual can be constructed graphically just like constructing planar dual from chord diagrams.

\begin{prop}
    Folded dual can be constructed from a type D chord diagram as follows:
    
    -Draw the planar dual such that each chord in the folded dual only intersects one chord in the folded chord diagram
\end{prop}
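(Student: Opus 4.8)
The plan is to mirror the type~B argument: reduce the statement to Theorem~\ref{thm C} together with the standard crossing property of planar duals, and then handle the two features that are genuinely new in type~D, namely the unique cycle and the two-circle (annular) picture.

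First I would observe that, by definition, the folded dual draws a chord between $k_i$ and $k_j$ exactly when some $r_k'$ in the reverse Garside dual satisfies $|r_k'(k_i)|=|k_j|$. By Theorem~\ref{thm C}, this is precisely the condition that the regions $reg(e_i)$ and $reg(e_j)$ are adjacent in the planar dual. Thus the combinatorial adjacency prescribed by the folded dual agrees with region-adjacency in the planar dual, and the only thing left to prove is the geometric assertion: each folded-dual chord can be realized crossing exactly one chord of the folded chord diagram.

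Next I would split the folded chord diagram into its forest part and its unique cycle, which exists and contains vertex~$1$ by the earlier proposition on the graph structure of type~D folded chord diagrams. On the forest part the situation is identical to type~A: each tree chord separates exactly two regions, so its dual chord crosses it once, and the clockwise-decreasing edge-label drawing established in the preceding proposition gives a planar embedding of these dual chords. The new work occurs at the cycle. The cycle encloses the inner circle carrying vertex~$1$, so each cycle chord separates the inner region from an outer region; I would verify that the dual chord assigned to such a cycle chord crosses it exactly once and terminates in the correct pair of regions, paying particular attention to the two chords incident to vertex~$1$ on the inner circle, whose clockwise-decreasing placement must be matched by dual chords routed through the annulus.

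The main obstacle I expect is exactly this annular routing: in contrast to the disk in types~A and~B, a dual chord crossing a cycle chord must pass through the region between the two circles, and one must confirm that no such dual chord is forced to cross a second primal chord. I would resolve this using the uniqueness, up to boundary-fixing homeomorphism, of the clockwise-decreasing drawing proved in the preceding proposition: this rigidity pins down the embedding tightly enough that the crossing number of every dual chord is forced to be one, completing the identification of the folded dual with the planar dual.
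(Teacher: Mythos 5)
Your proposal is correct and follows essentially the same route as the paper: both arguments hinge on the preceding proposition that the type~D folded chord diagram can be drawn with clockwise-decreasing edge labels around each vertex, which reduces the construction to the type~A planar-dual picture. The paper's own proof is a single sentence making exactly this reduction; your explicit invocation of Theorem~\ref{thm C}, the forest/cycle decomposition, and the annular-routing analysis are elaborations of details the paper leaves implicit rather than a different approach.
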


\begin{proof}
    Since the chords can be drawn in a clockwise decreasing way, the folded dual is similar to the planar dual of type A chord diagrams.
\end{proof}

\begin{figure}
    \centering
\begin{tikzpicture}[x=0.75pt,y=0.75pt,yscale=-1,xscale=1]
%uncomment if require: \path (0,300); %set diagram left start at 0, and has height of 300

%Shape: Circle [id:dp044498739993663605] 
\draw   (251,154.5) .. controls (251,110.04) and (287.04,74) .. (331.5,74) .. controls (375.96,74) and (412,110.04) .. (412,154.5) .. controls (412,198.96) and (375.96,235) .. (331.5,235) .. controls (287.04,235) and (251,198.96) .. (251,154.5) -- cycle ;
%Shape: Circle [id:dp01567476845611937] 
\draw   (315.75,154.43) .. controls (315.79,145.73) and (322.87,138.71) .. (331.57,138.75) .. controls (340.27,138.79) and (347.29,145.87) .. (347.25,154.57) .. controls (347.21,163.27) and (340.13,170.29) .. (331.43,170.25) .. controls (322.73,170.21) and (315.71,163.13) .. (315.75,154.43) -- cycle ;
%Straight Lines [id:da6205774755164082] 
\draw    (331.5,74) -- (406,185.6) ;
%Curve Lines [id:da47051139316863777] 
\draw    (331.57,138.75) .. controls (252,143.6) and (373,218.6) .. (406,185.6) ;
%Straight Lines [id:da3215665095961153] 
\draw    (331.5,74) -- (331.57,138.75) ;
%Curve Lines [id:da32424511108760923] 
\draw    (264,197) .. controls (309,221.6) and (384,211.2) .. (406,185.6) ;
%Straight Lines [id:da6982599731152872] 
\draw [color={rgb, 255:red, 208; green, 2; blue, 27 }  ,draw opacity=1 ]   (262,116) -- (331.5,235) ;
%Straight Lines [id:da923062332620344] 
\draw [color={rgb, 255:red, 208; green, 2; blue, 27 }  ,draw opacity=1 ]   (337,169.6) -- (406,120.6) ;
%Curve Lines [id:da3496673702499642] 
\draw [color={rgb, 255:red, 208; green, 2; blue, 27 }  ,draw opacity=1 ]   (262,116) .. controls (302,86) and (291.43,200.25) .. (331.43,170.25) ;
%Curve Lines [id:da6798327292517452] 
\draw [color={rgb, 255:red, 208; green, 2; blue, 27 }  ,draw opacity=1 ]   (262,116) .. controls (302,86) and (387,136.6) .. (337,169.6) ;

% Text Node
\draw (325,140) node [anchor=north west][inner sep=0.75pt]   [align=left] {1};
% Text Node
\draw (327,48) node [anchor=north west][inner sep=0.75pt]   [align=left] {2};
% Text Node
\draw (416,189) node [anchor=north west][inner sep=0.75pt]   [align=left] {3};
% Text Node
\draw (241,194) node [anchor=north west][inner sep=0.75pt]   [align=left] {4};
% Text Node
\draw (370,113) node [anchor=north west][inner sep=0.75pt]   [align=left] {2};
% Text Node
\draw (356,173) node [anchor=north west][inner sep=0.75pt]   [align=left] {3};
% Text Node
\draw (321,106) node [anchor=north west][inner sep=0.75pt]   [align=left] {1};
% Text Node
\draw (317,196) node [anchor=north west][inner sep=0.75pt]   [align=left] {4};
% Text Node
\draw (413,107) node [anchor=north west][inner sep=0.75pt]  [color={rgb, 255:red, 208; green, 2; blue, 27 }  ,opacity=1 ] [align=left] {3};
% Text Node
\draw (242,102) node [anchor=north west][inner sep=0.75pt]  [color={rgb, 255:red, 208; green, 2; blue, 27 }  ,opacity=1 ] [align=left] {2};
% Text Node
\draw (324,241) node [anchor=north west][inner sep=0.75pt]  [color={rgb, 255:red, 208; green, 2; blue, 27 }  ,opacity=1 ] [align=left] {4};
% Text Node
\draw (325,155) node [anchor=north west][inner sep=0.75pt]  [color={rgb, 255:red, 208; green, 2; blue, 27 }  ,opacity=1 ] [align=left] {1};
% Text Node
\draw (296,91) node [anchor=north west][inner sep=0.75pt]  [color={rgb, 255:red, 208; green, 2; blue, 27 }  ,opacity=1 ] [align=left] {1};
% Text Node
\draw (283,134) node [anchor=north west][inner sep=0.75pt]  [color={rgb, 255:red, 208; green, 2; blue, 27 }  ,opacity=1 ] [align=left] {3};
% Text Node
\draw (271,153) node [anchor=north west][inner sep=0.75pt]  [color={rgb, 255:red, 208; green, 2; blue, 27 }  ,opacity=1 ] [align=left] {4};
% Text Node
\draw (366,147) node [anchor=north west][inner sep=0.75pt]  [color={rgb, 255:red, 208; green, 2; blue, 27 }  ,opacity=1 ] [align=left] {2};
\end{tikzpicture}
\caption{Folded dual and folded chord diagram for factorizations $(e_1+e_2,e_2-e_3,e_1-e_3,e_3-e_4)$ and $(e_1-e_2,e_2-e_3,e_1+e_3,e_3-e_4)$ of Coxeter element of cycle type (2,3,4,-2,-3,-4)(1,-1)}
\end{figure}
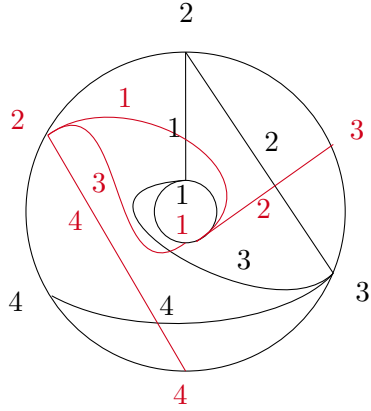

\begin{exa}
    Figure 13 shows how to draw folded dual from the folded chord diagram by drawing chords that will only intersect with one of chords in the folded chord diagram.
\end{exa}

\subsection{Generalized Goulden-Yong duals}
In this section, we will construct generalized Goulden-Yong duals for type B and type D reflection groups.

\subsubsection{Type B Goulden-Yong dual}
In this section, we will define the type B Goulden-Yong dual and proves that it gives a bijection between minimal factorizations in $B_n$ and rooted trees with a loop attached.

\begin{defi}
    \textbf{Type B Goulden-Yong dual} is constructed as follows:
    
    -Given a Coxeter element in $B_n$, construct its folded dual

    -Make the vertex labeled 1 as root

    -For the unique vertex $v_i$ that's adjacent to a loop, change the vertex label to the edge label of the loop

    -Remove the loop and the rest of the folded dual is a tree.

    -For each edge, find the unique path to the $v_i$ and slide the label on the edge to the incident vertex away from $v_i$ (This makes the new vertex labels equal to the old edge labels.)

    -Add back the loop
\end{defi}

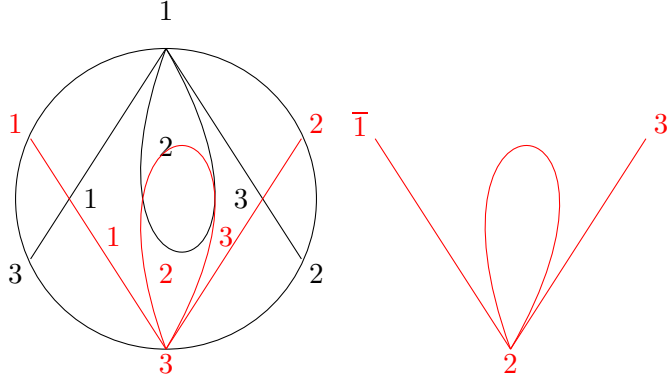
\begin{figure}
    \centering
    \begin{tikzpicture}[baseline={(0,0)}]
    % Draw a circle
    \draw (0,0) circle (2cm);
  
    % Add numbers
    \node at (0,2.5) {1};
    \node at (2,-1) {2};
    \node at (-2,-1) {3};
    \node at (1,0) {3};
    \node at (-1,0) {1};
    \node at (0, 0.7) {2};
    \node[red] at (2,1) {2};
    \node[red] at (0,-2.2) {3};
    \node[red] at (-2, 1) {1};
    \node[red] at (0,-1) {2};
    \node[red] at (-0.7,-0.5) {1};
    \node[red] at (0.8, -0.5) {3};
    
    % Connect points
    \draw (0,2) -- (1.8,-0.8);
    \draw (0,2) -- (-1.8,-0.8);
    \draw[scale=8] (0,0.25) to [out=250,in=300,loop] (0,0.25);
    \draw[red] (1.8,0.8) -- (0,-2);
    \draw[red] (-1.8, 0.8) -- (0, -2);
    \draw[scale=8, red] (0,-0.25) to [out=-250,in=-300,loop] (0,-0.25);
    \end{tikzpicture}
    \begin{tikzpicture}[baseline={(0,0)}]  
    % Add numbers
    \node[red] at (2,1) {3};
    \node[red] at (0,-2.2) {2};
    \node[red] at (-2, 1) {$\overline{1}$};
    
    % Connect points
    \draw[red] (1.8,0.8) -- (0,-2);
    \draw[red] (-1.8, 0.8) -- (0, -2);
    \draw[scale=8, red] (0,-0.25) to [out=-250,in=-300,loop] (0,-0.25);
    \end{tikzpicture}
    \caption{Type B Goulden-Yong dual for $(e_1+e_3,e_1,e_1-e_2)$ where $1$ is the root.}
\end{figure}

\begin{exa}
    Figure 14 shows the type B Goulden-Yong dual for $(e_1+e_3,e_1,e_1-e_2)$.
    
    We use the folded dual to construct the Goulden-Yong dual.
    
    First mark the vertex labeled 1 as root.
    
    The unique vertex adjacent to a loop in the folded dual is $3$. Replace its label by the edge label of the loop which is 2.

    Ignore the loop and focus on the tree part. Push the edge labels away from $3$. So vertex 1 has label 1 and vertex 2 has label 3.

    So we have a rooted tree with a loop where the root is at $1$ and the loop is attached to $2$.
\end{exa}

Recall that Goulden-Yong duals give a bijection between the set of minimal factorizations in $A_n$ and the number of trees on $n+1$ vertices. Type B Goulden-Yong duals also gives a similar bijection. 

\begin{thm}\label{thm D}
    Type B Goulden-Yong duals give a bijection between set of minimal factorizations of a Coxeter element in $B_n$ and the set $BT_n$, the set of rooted trees on n vertices with a loop attached to one of the vertices, such that a reflection $r_i$ in the factorization corresponds to a vertex $i$ in the rooted tree.
\end{thm}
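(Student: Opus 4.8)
The plan is to prove the bijection in three stages: match cardinalities, verify well-definedness, and reduce the essential content to the type A Goulden-Yong bijection while isolating the extra loop datum. I would begin with a counting argument that makes the whole problem a matter of injectivity. By the standard enumeration of reflection factorizations of a Coxeter element, $n!\,h^n/|W|$, applied to $B_n$ with $h=2n$ and $|W|=2^n n!$, the number of minimal factorizations is $n!\,(2n)^n/(2^n n!)=n^n$. On the other side, an element of $BT_n$ is a labeled tree on $n$ vertices together with a choice of root and a choice of vertex carrying the loop, so by Cayley's formula $|BT_n|=n^{n-2}\cdot n\cdot n=n^n$. Since both sets are finite of the same cardinality, it suffices to prove that the type B Goulden-Yong dual is injective (equivalently, surjective).

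Next I would check well-definedness, i.e. that the output always lies in $BT_n$. By the proposition on folded chord diagrams, the chords of the folded chord diagram of a $B_n$ factorization form a tree on $n$ vertices with a single attached loop, and passing to the folded (planar) dual again yields a tree with a single loop on $n$ vertices. The label-sliding step is identical to the type A construction: pushing each tree-edge label toward the endpoint away from the distinguished vertex $v_i$ sets up a bijection between the $n-1$ tree edges and the $n-1$ vertices other than $v_i$, so every vertex receives exactly one label. Marking the vertex labeled $1$ as root and recording the loop on $v_i$ then produces a rooted tree with a loop, hence an element of $BT_n$.

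The core step is the reduction to type A. In any minimal factorization of the $B_n$ Coxeter element exactly one reflection is a sign change $e_i$ (the loop); deleting it while preserving the order and projecting $e_i\pm e_j\mapsto(|i|,|j|)$ yields a minimal factorization of the projected $A_{n-1}$ Coxeter element. On the dual side, removing the loop from the folded dual leaves precisely the type A Goulden-Yong dual of this $A_{n-1}$ factorization, so the tree part of the output is governed by the type A theorem, which I may invoke as a bijection onto labeled trees on $n$ vertices. To finish I would describe the inverse explicitly: from an element of $BT_n$, read the loop vertex and its label to recover the sign-change reflection together with its position in the factorization, apply the inverse type A Goulden-Yong map to the remaining tree to recover the $A_{n-1}$ factorization, and reinsert the sign change at the recorded position.

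The hard part will be establishing that the loop datum is genuinely independent of the tree part. Concretely, I must show that every triple consisting of a labeled tree on $n$ vertices, a root, and a loop vertex is realized by exactly one $B_n$ factorization, which amounts to verifying that deleting and reinserting the loop neither interferes with the label-pushing nor constrains the accompanying $A_{n-1}$ factorization, and that the position and sign carried by the loop are uniquely reconstructible from the folded dual (using the clockwise-decreasing drawing guaranteed by the earlier propositions). Once this independence and reversibility are checked, injectivity follows from the type A bijection applied to the tree part, and the equal-cardinality count then upgrades injectivity to the asserted bijection.
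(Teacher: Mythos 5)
Your cardinality computation is correct and is a genuine addition not present in the paper: the enumeration $n!\,h^n/|W|$ gives $n^n$ minimal factorizations for $B_n$, and $|BT_n|=n^{n-2}\cdot n\cdot n=n^n$ by Cayley's formula, so bijectivity does reduce to injectivity alone. (The paper never counts; it constructs a two-sided inverse directly.) Your well-definedness step is also fine and matches the paper's propositions on folded chord diagrams. However, your core reduction to type A contains a genuine gap, not merely a deferred verification.

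The problem is a mismatch of pushing centers. In the type A construction the edge labels are slid away from the vertex of the region $R(1)$ (the vertex labeled $n+1$); in the paper's type B construction the labels are slid away from the \emph{loop vertex} $v_i$, while the root (the dual vertex of arc $1$) is only a marked point and plays no role in the sliding. Consequently, removing the loop from a type B Goulden-Yong dual does \emph{not} leave ``precisely the type A Goulden-Yong dual'' of the projected $A_{n-1}$ factorization --- the two labeled trees differ by re-sliding labels along the path between the root and the loop vertex --- and so your proposed inverse, ``apply the inverse type A Goulden-Yong map to the remaining tree,'' returns the wrong edge labels: that inverse unpushes toward the wrong vertex. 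For the same reason, the independence statement you isolate as the hard part (``deleting and reinserting the loop neither interferes with the label-pushing'') is false as stated: the loop interferes essentially, since its attachment point \emph{is} the pushing center. The repair is exactly what the paper's proof does: unpush the vertex labels toward the loop vertex $v_i$ (this is reversible because the loop records $v_i$ in the output), relabel the loop by the old label of $v_i$, give the root label $1$, and then use the root as the anchor for reconstructing the plane embedding of the folded dual as in the type A inverse; the region into which the loop is reinserted is forced by the clockwise-decreasing property, and the index and sign of the sign-change reflection, as well as the signs of all lifts $e_a\pm e_b$, are forced by the fixed Coxeter element. With this corrected reconstruction your counting argument does complete the proof, and indeed yields a slightly lighter argument than the paper's, since equal cardinalities make a surjectivity check unnecessary.
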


\begin{proof}
    Type B Goulden-Yong dual maps a factorization $(r_1,\dots,r_n)$ to a rooted tree with a loop $T$ where a vertex in the graph corresponds to an edge in the folded dual which corresponds to a reflection in the factorization.

    Given a rooted tree on n vertices with a loop. The inverse map is given as follows:

    -Find the unique vertex $v_i$ that's adjacent to a loop

    -Label the loop by the vertex label of $v_i$

    -For each vertex, find the unique path to $v_i$ and slide the label on the vertex to the incident edge on the path

    -Give the root label 1

    The inverse corresponds to a unique factorization because the tree part of the graph corresponds to a unique folded dual given by inverse of Goulden-Yong dual
\end{proof}

\subsubsection{Type D Goulden-Yong dual}
In this section, we will define the type D Goulden-Yong dual and proves that it gives a bijection between type D folded duals and rooted simple spanning graphs on $n$ vertices with $n$ edges where the root cannot be the smallest vertex in the cycle.
\begin{defi}
    \textbf{Type D Goulden-Yong dual} is constructed as follows:
    
    -Given a Coxeter element in $D_n$, construct its folded dual

    -Make the vertex labeled 2 as root

    -For each vertex inside the cycle except for 1, there are two edges in the cycle adjacent to it, choose the bigger edge label and use it to replace the vertex label

    -For the vertex labeled 1, choose the smaller edge label and use it to replace the vertex label

    -For each edge outside the cycle, slide the label on the edge to the incident vertex away from the cycle
\end{defi}

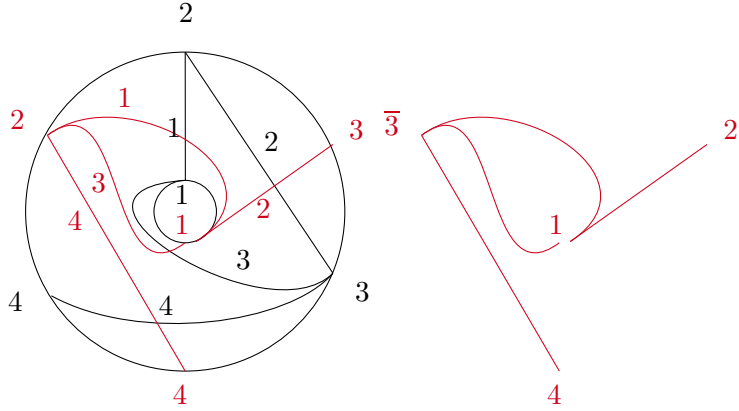
\begin{figure}
\centering
\begin{tikzpicture}[x=0.75pt,y=0.75pt,yscale=-1,xscale=1]
%uncomment if require: \path (0,300); %set diagram left start at 0, and has height of 300

%Shape: Circle [id:dp044498739993663605] 
\draw   (251,154.5) .. controls (251,110.04) and (287.04,74) .. (331.5,74) .. controls (375.96,74) and (412,110.04) .. (412,154.5) .. controls (412,198.96) and (375.96,235) .. (331.5,235) .. controls (287.04,235) and (251,198.96) .. (251,154.5) -- cycle ;
%Shape: Circle [id:dp01567476845611937] 
\draw   (315.75,154.43) .. controls (315.79,145.73) and (322.87,138.71) .. (331.57,138.75) .. controls (340.27,138.79) and (347.29,145.87) .. (347.25,154.57) .. controls (347.21,163.27) and (340.13,170.29) .. (331.43,170.25) .. controls (322.73,170.21) and (315.71,163.13) .. (315.75,154.43) -- cycle ;
%Straight Lines [id:da6205774755164082] 
\draw    (331.5,74) -- (406,185.6) ;
%Curve Lines [id:da47051139316863777] 
\draw    (331.57,138.75) .. controls (252,143.6) and (373,218.6) .. (406,185.6) ;
%Straight Lines [id:da3215665095961153] 
\draw    (331.5,74) -- (331.57,138.75) ;
%Curve Lines [id:da32424511108760923] 
\draw    (264,197) .. controls (309,221.6) and (384,211.2) .. (406,185.6) ;
%Straight Lines [id:da6982599731152872] 
\draw [color={rgb, 255:red, 208; green, 2; blue, 27 }  ,draw opacity=1 ]   (262,116) -- (331.5,235) ;
%Straight Lines [id:da923062332620344] 
\draw [color={rgb, 255:red, 208; green, 2; blue, 27 }  ,draw opacity=1 ]   (337,169.6) -- (406,120.6) ;
%Curve Lines [id:da3496673702499642] 
\draw [color={rgb, 255:red, 208; green, 2; blue, 27 }  ,draw opacity=1 ]   (262,116) .. controls (302,86) and (291.43,200.25) .. (331.43,170.25) ;
%Curve Lines [id:da6798327292517452] 
\draw [color={rgb, 255:red, 208; green, 2; blue, 27 }  ,draw opacity=1 ]   (262,116) .. controls (302,86) and (387,136.6) .. (337,169.6) ;

% Text Node
\draw (325,140) node [anchor=north west][inner sep=0.75pt]   [align=left] {1};
% Text Node
\draw (327,48) node [anchor=north west][inner sep=0.75pt]   [align=left] {2};
% Text Node
\draw (416,189) node [anchor=north west][inner sep=0.75pt]   [align=left] {3};
% Text Node
\draw (241,194) node [anchor=north west][inner sep=0.75pt]   [align=left] {4};
% Text Node
\draw (370,113) node [anchor=north west][inner sep=0.75pt]   [align=left] {2};
% Text Node
\draw (356,173) node [anchor=north west][inner sep=0.75pt]   [align=left] {3};
% Text Node
\draw (321,106) node [anchor=north west][inner sep=0.75pt]   [align=left] {1};
% Text Node
\draw (317,196) node [anchor=north west][inner sep=0.75pt]   [align=left] {4};
% Text Node
\draw (413,107) node [anchor=north west][inner sep=0.75pt]  [color={rgb, 255:red, 208; green, 2; blue, 27 }  ,opacity=1 ] [align=left] {3};
% Text Node
\draw (242,102) node [anchor=north west][inner sep=0.75pt]  [color={rgb, 255:red, 208; green, 2; blue, 27 }  ,opacity=1 ] [align=left] {2};
% Text Node
\draw (324,241) node [anchor=north west][inner sep=0.75pt]  [color={rgb, 255:red, 208; green, 2; blue, 27 }  ,opacity=1 ] [align=left] {4};
% Text Node
\draw (325,155) node [anchor=north west][inner sep=0.75pt]  [color={rgb, 255:red, 208; green, 2; blue, 27 }  ,opacity=1 ] [align=left] {1};
% Text Node
\draw (296,91) node [anchor=north west][inner sep=0.75pt]  [color={rgb, 255:red, 208; green, 2; blue, 27 }  ,opacity=1 ] [align=left] {1};
% Text Node
\draw (283,134) node [anchor=north west][inner sep=0.75pt]  [color={rgb, 255:red, 208; green, 2; blue, 27 }  ,opacity=1 ] [align=left] {3};
% Text Node
\draw (271,153) node [anchor=north west][inner sep=0.75pt]  [color={rgb, 255:red, 208; green, 2; blue, 27 }  ,opacity=1 ] [align=left] {4};
% Text Node
\draw (366,147) node [anchor=north west][inner sep=0.75pt]  [color={rgb, 255:red, 208; green, 2; blue, 27 }  ,opacity=1 ] [align=left] {2};
\end{tikzpicture}
\begin{tikzpicture}[x=0.75pt,y=0.75pt,yscale=-1,xscale=1]
%uncomment if require: \path (0,300); %set diagram left start at 0, and has height of 300

\draw [color={rgb, 255:red, 208; green, 2; blue, 27 }  ,draw opacity=1 ]   (262,116) -- (331.5,235) ;
%Straight Lines [id:da923062332620344] 
\draw [color={rgb, 255:red, 208; green, 2; blue, 27 }  ,draw opacity=1 ]   (337,169.6) -- (406,120.6) ;
%Curve Lines [id:da3496673702499642] 
\draw [color={rgb, 255:red, 208; green, 2; blue, 27 }  ,draw opacity=1 ]   (262,116) .. controls (302,86) and (291.43,200.25) .. (331.43,170.25) ;
%Curve Lines [id:da6798327292517452] 
\draw [color={rgb, 255:red, 208; green, 2; blue, 27 }  ,draw opacity=1 ]   (262,116) .. controls (302,86) and (387,136.6) .. (337,169.6) ;

\draw (413,107) node [anchor=north west][inner sep=0.75pt]  [color={rgb, 255:red, 208; green, 2; blue, 27 }  ,opacity=1 ] [align=left] {2};
% Text Node
\draw (242,102) node [anchor=north west][inner sep=0.75pt]  [color={rgb, 255:red, 208; green, 2; blue, 27 }  ,opacity=1 ] [align=left] {$\overline{3}$};
% Text Node
\draw (324,241) node [anchor=north west][inner sep=0.75pt]  [color={rgb, 255:red, 208; green, 2; blue, 27 }  ,opacity=1 ] [align=left] {4};
% Text Node
\draw (325,155) node [anchor=north west][inner sep=0.75pt]  [color={rgb, 255:red, 208; green, 2; blue, 27 }  ,opacity=1 ] [align=left] {1};

\end{tikzpicture}
\caption{Type D Goulden-Yong dual for factorizations $(e_1+e_2,e_2-e_3,e_1-e_3,e_3-e_4)$ and $(e_1-e_2,e_2-e_3,e_1+e_3,e_3-e_4)$ where $3$ is the root.}
\end{figure}

\begin{exa}
    Figure 15 shows how to construct type D Goulden-Yong dual for $(e_1+e_2,e_2-e_3,e_1-e_3,e_3-e_4)$ and $(e_1-e_2,e_2-e_3,e_1+e_3,e_3-e_4)$.

    First mark the vertex labeled 2 as root.

    Within the cycle, since $2$ is incident to edges $1$ and $3$, so replace the vertex label of $2$ by $3$. Since $1$ is incident to edges $1$ and $3$, replace the vertex label of $1$ by $1$.

    The root is now labeled $3$.

    Then push labels of edges outside of the cycle away from the cycle. So label vertex $3$ by $2$ and label vertex $4$ by 4.

    We've obtained the type D Goulden-Yong dual for $(e_1+e_2,e_2-e_3,e_1-e_3,e_3-e_4)$ and $(e_1-e_2,e_2-e_3,e_1+e_3,e_3-e_4)$.
\end{exa}

Like type B, type D Goulden-Yong duals gives a bijection between type D folded duals and rooted simple spanning graphs on $n$ vertices with n edges where the root cannot be the smallest vertex in the cycle. Since each type D folded dual corresponds to two minimal factorizations in $D_n$, it gives a 2-to-1 correspondence between minimal factorizations in $D_n$ and the set of graphs above.

\begin{thm}\label{thm E}
    Type D Goulden-Yong duals give a 2-to-1 correspondence between the set of minimal factorizations of a Coxeter element in $D_n$ and the set $DT_n$, the set of  rooted simple spanning graphs on $n$ vertices with n edges where the root cannot be the smallest vertex in the cycle, such that a reflection $r_i$ in the factorization corresponds to a vertex $i$ in the rooted tree.
\end{thm}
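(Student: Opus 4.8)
The plan is to factor the type D Goulden-Yong dual as
\[
\{\text{minimal factorizations in } D_n\}\xrightarrow{\,2:1\,}\{\text{folded chord diagrams}\}\longleftrightarrow\{\text{folded duals}\}\xrightarrow{\,\mathrm{relabel}\,}DT_n,
\]
and to reduce the theorem to showing that the last arrow is a bijection. The first arrow is precisely the 2-to-1 correspondence of the Lemma that each type D folded chord diagram comes from exactly two minimal factorizations, differing only in the sign of the chord of largest label incident to the vertex $1$. The middle identification of a folded chord diagram with its folded dual is graphical by the preceding proposition, and the relabeling reads off only the underlying folded dual; hence the composite depends on a factorization only through its folded chord diagram. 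Once the relabeling is shown to be a bijection onto $DT_n$, the whole composite is therefore 2-to-1, and by construction the label $i$ marks the vertex associated to $r_i$.

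First I would verify that the relabeling lands in $DT_n$. The folded dual is a simple spanning graph on $n$ vertices with $n$ edges and a single cycle, and the relabeling leaves this graph fixed, merely transporting each edge label onto a vertex. On the tree edges hanging off the cycle this is the type A Goulden-Yong push of an edge label to the endpoint farther from the cycle, which is the invertible operation already used in the bijection for $A_n$. On the cycle, each vertex other than $1$ takes the larger and the vertex $1$ takes the smaller of its two incident cycle-edge labels. Granting the structural fact (discussed below) that the two cycle edges at vertex $1$ carry the smallest and the largest cycle-edge labels, this min/max rule assigns each cycle-edge label to exactly one cycle vertex, with vertex $1$ receiving the global minimum. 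Thus vertex $1$ is the smallest-labeled vertex of the cycle, and since the root is the image of vertex $2$, which is distinct from vertex $1$, the root is never the smallest vertex of the cycle; the image lies in $DT_n$.

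Next I would construct the inverse. Given $G\in DT_n$, I locate the unique cycle and declare its smallest-labeled vertex to be the preimage of vertex $1$; the defining hypothesis that the root is not the smallest vertex of the cycle guarantees that the root is the preimage of vertex $2$, so the roles of $1$ and $2$ are recovered without ambiguity. On the tree edges I pull each vertex label back along the unique path toward the cycle, exactly as in the inverse of the $A_n$ Goulden-Yong dual. On the cycle I invert the min/max rule: reading the cyclic sequence of vertex labels and using that vertex $1$ holds the minimum while every other cycle vertex holds the maximum of its two incident edges, the cyclic sequence of cycle-edge labels is reconstructed uniquely. This yields a folded dual, hence a folded chord diagram, hence by the Lemma a well-defined pair of minimal factorizations, and these invert the forward map on both the tree and the cycle.

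The hard part will be the cycle step, which has no analogue in the tree-only type A and type B arguments. A naive max rule would award the largest cycle-edge label to both of its endpoints and leave the smallest label unclaimed, so bijectivity fails unless vertex $1$ is incident to both the smallest and the largest cycle-edge labels; conversely this incidence is exactly what produces both the well-definedness of the forward map and the ``root is not the smallest in the cycle'' condition cut out by $DT_n$. I would therefore devote most of the argument to establishing, from the clockwise-decreasing edge-label drawing of the folded dual, that the two cycle edges at vertex $1$ are the extreme-labeled ones and that the resulting cyclic reconstruction is single-valued. With that in hand, the tree bookkeeping and the 2-to-1 count follow formally from the earlier results.
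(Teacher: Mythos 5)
Your overall architecture is the same as the paper's: factor the correspondence through folded chord diagrams and folded duals, get the factor of $2$ from the Lemma that each type D folded chord diagram represents exactly two minimal factorizations, and then prove that the relabeling from folded duals to $DT_n$ is a bijection by exhibiting an explicit inverse (the paper's inverse slides each cycle-vertex label onto the adjacent cycle edge along the cyclic orientation in which labels increase from the smallest cycle vertex, slides tree-vertex labels toward the cycle, resets the root label to $2$, and then pins down the folded dual from the edge labels together with the root position via the orbit under rotation of the outer circle).

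The genuine gap is in your cycle step, and it sits precisely where you have chosen to place your trust. The structural fact you propose to establish --- that the two cycle edges at vertex $1$ carry the smallest and the largest cycle-edge labels --- is \emph{not} sufficient for the min/max rule to assign each cycle-edge label to exactly one cycle vertex. Take a cycle of length $4$ whose edge labels, read cyclically, are $1,3,2,4$, with vertex $1$ lying between the edges labeled $4$ and $1$: vertex $1$ duly takes the minimum $1$, but the two vertices flanking the edge labeled $3$ both take $3$, and the label $2$ is claimed by nobody, so the rule is neither injective nor surjective even though vertex $1$ is incident to both extremes. What is actually required --- and what the paper's inverse tacitly presupposes when it invokes ``the cyclic orientation such that the vertex label increases \dots starting from the smallest vertex label'' --- is the stronger fact that the cycle-edge labels of a folded dual are monotone increasing around the cycle starting from vertex $1$. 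That monotonicity is simultaneously what makes the forward min/max rule bijective on the cycle and what makes your cyclic reconstruction single-valued; incidence of the two extremes at vertex $1$ does neither once the cycle has length at least $4$. So the lemma you plan to ``devote most of the argument to'' is the wrong one: even after proving it, your forward map would remain ill-defined. A secondary soft spot (shared to some extent with the paper's own terse write-up) is your assertion that the reconstructed edge-labeled graph ``yields a folded dual'': recovering which arc-vertex of the folded dual is which requires the additional observation, made explicitly in the paper, that the edge labels determine the folded dual only up to the $(n-1)$ rotations of the outer circle, with the root position selecting a unique member of that orbit.
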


\begin{proof}
        Similar to type B, a vertex in the graph corresponds to an edge in the folded dual which corresponds to a reflection in the factorization.

    The inverse map from type D Goulden-Yong duals to type D folded duals is given as follows:

    -Within the cycle, choose the cyclic orientation such that the vertex label increases along the cyclic orientation starting from the smallest vertex label and slide the vertex label to the edge adjacent to it according to the cyclic orientation

    -For each vertex outside of the cycle, there's a unique path to the cycle. Slide the vertex label to the edge adjacent to it on the path

    -Change the vertex label of the root to 2.

    The inverse has all the edge labels and one of the vertex labels. The edge labels will correspond to $n-1$ type D folded duals where they lie in the same orbit under the action of rotating the outer circle.

    Adding the vertex label will correspond uniquely to one of the type D folded duals in the orbit.

    Therefore, type D Goulden-Yong dual gives a bijection between folded duals of a set of minimal factorizations in $D_n$ and $DT_n$.

    By Lemma 4.1, we know that each folded dual corresponds to two minimal factorizations. Therefore, this is a 2-to-1 correspondence between set of minimal factorizations in $D_n$ and type D Goulden-Yong duals.

    Since type D Goulden-Yong duals are bijective to $DT_n \times [n-1]$ by previous Lemma, there's a 2-to-1 correspondence between the set of minimal factorizations in $D_n$ and $DT_n$.
\end{proof}

\section{Applications of generalized Goulden-Yong duals}
In this section, we will explore two applications of the generalized Goulden-Yong duals. The first application is to define generalized Pr\"{u}fer codes which compose well with the generalized Goulden-Yong duals to provide bijective proof for the number of minimal factorizations. The second application is to count the number of signed minimal factorizations by applying the matrix-tree theorem to a weighted graph.

\subsection{Generalized Pr\"{u}fer codes}
In this section, we will define the generalized Pr\"{u}fer codes and show that they give bijections similar to the one between trees and sequences of numbers given by Pr\"{u}fer codes.

\subsubsection{Pr\"{u}fer codes}

First, we will review the Pr\"{u}fer codes of a labeled tree.

\begin{defi}
    Given a labeled tree on $n+1$ vertices, its \textbf{Pr\"{u}fer code} is constructed in the following way:

    -Make the vertex labeled $n+1$ as root

    -Pick the leaf with the smallest label

    -Record the label of the parent of the chosen vertex and remove the vertex from the rooted labeled tree

    -Repeat the process until there's two vertices left in the rooted labeled tree
\end{defi}

\begin{exa}
Below we have a labeled tree on 4 vertices.
\begin{center}
\begin{tikzpicture}[x=0.75pt,y=0.75pt,yscale=-1,xscale=1]
%uncomment if require: \path (0,300); %set diagram left start at 0, and has height of 300

%Straight Lines [id:da958022914170414] 
\draw    (310,93) -- (359,149.6) ;
%Straight Lines [id:da7351538003705604] 
\draw    (310,93) -- (259,149.6) ;
%Straight Lines [id:da19629618067694699] 
\draw    (259,149.6) -- (259,220.6) ;

% Text Node
\draw (304,71) node [anchor=north west][inner sep=0.75pt]   [align=left] {4};
% Text Node
\draw (354,152.6) node [anchor=north west][inner sep=0.75pt]   [align=left] {1};
% Text Node
\draw (242,145) node [anchor=north west][inner sep=0.75pt]   [align=left] {3};
% Text Node
\draw (244,210) node [anchor=north west][inner sep=0.75pt]   [align=left] {2};
\end{tikzpicture}    
\end{center}
To construct its Pr\"{u}fer code, we first make $4$ the root and turn it into a rooted tree.

We then find the smallest leaf which is $1$. So we record its parent $4$ in the sequence and remove $1$ from the tree. So $a_1=4$

Then $2$ is the smallest leaf. We record its parent 3 and remove it from the tree. So $a_2=3$.

Now there are two vertices left and we are done.

So its Pr\"{u}fer code is $(4,3)$.
\end{exa}

Define a set of sequences $S_A=\{(a_1,\dots,a_{n-1}) \ | \ a_i \in [n+1]\}$. It's clear that this set has cardinality $(n+1)^{n-1}$. Pr\"{u}fer code gives a bijection between labeled trees of $n+1$ vertices and $S_A$ which also gives a bijective proof for Cayley's formula. 

Composed with the Goulden-Yong duals, this gives a bijective proof for the number of minimal factorizations in $A_n$.

\subsubsection{Type B Pr\"{u}fer codes}

In this section, we will construct the type B Pr\"{u}fer codes.

\begin{defi}
    Given a rooted labeled tree with a loop in $BT_n$, its \textbf{type B Pr\"{u}fer code} $(b_1,\cdots,b_n)$ is constructed in the following way:

    -Record the vertex label of the root as $b_1$.

    -Remove the loop.

    -Let $v_i$ be the unique vertex adjacent to the loop. 
    
    -Pick the smallest leaf.

    -Record the label of the parent of the vertex and remove the vertex from the graph

    -Repeat the process until there's one vertex left in the graph
\end{defi}

\begin{exa}
    Below we have a rooted labeled tree with a loop.

\begin{center}
    \begin{tikzpicture}[x=0.75pt,y=0.75pt,yscale=-1,xscale=1]
%uncomment if require: \path (0,300); %set diagram left start at 0, and has height of 300

%Straight Lines [id:da958022914170414] 
\draw    (310,93) -- (359,149.6) ;
%Straight Lines [id:da7351538003705604] 
\draw    (310,93) -- (259,149.6) ;
%Straight Lines [id:da19629618067694699] 
\draw    (259,149.6) -- (259,220.6) ;
%Shape: Circle [id:dp4901824270032997] 
\draw   (242,149.6) .. controls (242,144.91) and (245.81,141.1) .. (250.5,141.1) .. controls (255.19,141.1) and (259,144.91) .. (259,149.6) .. controls (259,154.29) and (255.19,158.1) .. (250.5,158.1) .. controls (245.81,158.1) and (242,154.29) .. (242,149.6) -- cycle ;

% Text Node
\draw (304,71) node [anchor=north west][inner sep=0.75pt]   [align=left] {3};
% Text Node
\draw (354,152.6) node [anchor=north west][inner sep=0.75pt]   [align=left] {1};
% Text Node
\draw (261,152.6) node [anchor=north west][inner sep=0.75pt]   [align=left] {4};
% Text Node
\draw (246,210) node [anchor=north west][inner sep=0.75pt]   [align=left] {2};
\end{tikzpicture}
\end{center}

To construct its type B Pr\"{u}fer code, we first record the vertex label of the root. So $b_1=3$.

The vertex in the graph that is incident to a loop is $4$. Now make $4$ the new root and remove the loop. We have a rooted tree whose root is $4$.

The smallest leaf is $1$. We record the parent of $1$ and remove $1$, so $b_2=3$.

Then the smallest leaf is 2. Record its parent and remove it, so $b_3=4$.

Now $3$ is the smallest leaf. Record its parent and remove it, so $b_4=4$.

Now we only have one vertex left, we are done. Type $B$ Pr\"{u}fer code for this graph is $(3,3,4,4)$
\end{exa}

\begin{thm}\label{thm F}
    Type $B$ Pr\"{u}fer code gives a bijection between $BT_n$, the set of rooted labeled trees with a loop on n vertices, and a set of sequences $S_B=\{(b_1,\dots,b_n) \ | \ b_i \in [n]\}$
\end{thm}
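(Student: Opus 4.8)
The plan is to reduce the statement to the classical Prüfer correspondence applied to a rooted variant, after peeling off the root coordinate. First I would check that the map is well-defined, i.e. that its output lies in $S_B$: the first entry $b_1$ is a vertex label, hence in $[n]$, and every subsequent entry is the label of a parent vertex, also in $[n]$; moreover the leaf-removal process records exactly $n-1$ parents (one for each non-root vertex deleted before a single vertex remains), so the code has length $n$. As a sanity check on the eventual bijection I would compute cardinalities: by Cayley's formula there are $n^{n-2}$ labeled trees on $[n]$, and an element of $BT_n$ is such a tree together with an independent choice of root vertex and of loop-vertex, so $|BT_n| = n^{n-2}\cdot n\cdot n = n^n = |S_B|$. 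Hence it suffices to produce a two-sided inverse (equivalently, to prove injectivity).

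Second, I would separate the two pieces of data. The entry $b_1$ records the root and carries no information about the rest of the construction, so it can be stripped off and restored trivially at the end. The remaining entries $(b_2,\dots,b_n)\in[n]^{n-1}$ are produced by running the leaf-removal process on the tree rerooted at the unique loop-vertex $v$. I would therefore reduce the theorem to the following \emph{rooted Prüfer bijection}: the assignment sending a labeled tree on $[n]$ rooted at a vertex $v$ to the sequence of parents recorded as non-root leaves are deleted one at a time (smallest first) until only $v$ remains is a bijection from rooted labeled trees on $[n]$ onto $[n]^{n-1}$. Both sides have cardinality $n^{n-1}$, and crucially the root $v$ is exactly the last surviving vertex, so recovering the sequence recovers the tree \emph{together with} its root — this is what lets us read off the loop-vertex.

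Third, I would prove this rooted Prüfer bijection by the standard appearance-count argument adapted to the rooted setting. The key invariant is that a vertex $u$ occurs in the code exactly as many times as it has children in the rooted tree; in particular $u$ is a current leaf precisely when it fails to occur among the not-yet-processed entries. This makes decoding deterministic: reading $(b_2,\dots,b_n)$ left to right, at step $j$ one attaches the smallest still-present vertex not occurring in $b_{j+1},\dots,b_n$ to the vertex $b_{j+1}$, deletes it, and declares the single vertex left at the end to be the root $v$. I would verify that this decode is well-defined (at each step such a removable leaf exists and is unique) and that it inverts the encode on both sides, which is the classical Prüfer verification. Combining the recovered rooted tree (hence its loop-vertex $v$) with the stored root $b_1$ then reconstructs the element of $BT_n$ uniquely, giving the bijection.

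I expect the main obstacle to be the bookkeeping in the rooted variant: unlike the classical code of length $n-2$, the process here runs one step further (length $n-1$) and the terminal vertex must be \emph{identified} as the loop-vertex/root rather than discarded. Confirming that the decoding always has a unique removable leaf at each step, and that it faithfully recovers $v$ — with no residual ambiguity between $v$ and the final deleted vertex — is the delicate point; once this is in place, everything else follows the standard Prüfer argument and the independent first coordinate $b_1$ closes the proof.
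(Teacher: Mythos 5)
Your proposal is correct and takes essentially the same route as the paper: peel off $b_1$ as the root label, interpret $(b_2,\dots,b_n)\in[n]^{n-1}$ as the rooted Pr\"{u}fer code of the tree rerooted at the loop vertex, and recover the loop position as the root of the decoded tree (the last surviving vertex, recorded as $b_n$). The only difference is one of detail: you spell out the appearance-count argument establishing the rooted Pr\"{u}fer bijection between rooted labeled trees on $n$ vertices and $[n]^{n-1}$, whereas the paper simply invokes it as ``the inverse of the Pr\"{u}fer code.''
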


\begin{proof}
    The sequence $(b_2,\dots,b_n)$ corresponds uniquely to a rooted tree on n vertices by the inverse of the Pr\"{u}fer code.

    Put a loop at the root $b_n$ and make $b_1$ the new root of the graph. This corresponds uniquely to a rooted labeled tree with root $b_1$ with a loop at $b_n$ in $BT_n$.
\end{proof}

Since $(b_2,\dots,b_n)$ gives a rooted tree, we can breakdown $BT_n$ as $[n] \times RT_n$ where $RT_n$ is the set of rooted trees on $n$ vertices.

\begin{cor}
    There's a bijection between $BT_n$ and $[n] \times RT_n$, where $RT_n$ is the set of rooted trees on $n$ vertices.
\end{cor}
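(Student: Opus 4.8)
The plan is to read the decomposition off directly from the type B Pr\"{u}fer code established in Theorem \ref{thm F}. Recall that this code produces a sequence $(b_1,\dots,b_n)$ in which $b_1$ is the label of the root of the loop-tree, while the remaining entries $(b_2,\dots,b_n)$ are obtained by the ordinary leaf-stripping procedure applied to the tree that remains after deleting the loop, rooted at the loop vertex $v_i$. The two pieces of the code therefore play independent roles: $b_1$ records only the root, and $(b_2,\dots,b_n)$ records only the underlying rooted tree. This independence is exactly what the desired product decomposition expresses.

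First I would split the target set $S_B=\{(b_1,\dots,b_n) : b_i\in[n]\}$ as the product $[n]\times S_B'$, where $S_B'=\{(b_2,\dots,b_n) : b_i\in[n]\}$ and the first factor records $b_1$. Since $b_1$ ranges freely over $[n]$ and both determines and is determined by the root label, this first factor matches the choice of root. Next I would identify $S_B'$ with $RT_n$: the tail $(b_2,\dots,b_n)$ is precisely the rooted Pr\"{u}fer code of the tree obtained by deleting the loop, rooted at $v_i$, and by the inverse construction used in the proof of Theorem \ref{thm F} this is a bijection between $S_B'$ and the set $RT_n$ of rooted trees on $n$ vertices, with the root recovered as the entry $b_n$. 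A cardinality check confirms consistency, since $|S_B'|=n^{n-1}=n\cdot n^{n-2}=|RT_n|$ by Cayley's formula. Composing the bijection $BT_n\cong S_B$ of Theorem \ref{thm F} with the identification $S_B\cong[n]\times S_B'\cong[n]\times RT_n$ then yields the claim.

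The one point that requires care, and which I expect to be the main obstacle, is verifying that the tail map $(b_2,\dots,b_n)\mapsto$ rooted tree really is a bijection onto all of $RT_n$, i.e. the rooted analogue of the Pr\"{u}fer bijection. This, however, follows from the very same inverse leaf-insertion algorithm that already establishes Theorem \ref{thm F}, so no genuinely new argument is needed; the work is only to observe that the root is unambiguously recoverable (as $b_n$) and that no information is lost or duplicated when the first coordinate $b_1$ is stripped off. Once this is in place, the corollary is immediate.
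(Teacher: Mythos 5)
Your proposal is correct and matches the paper's own argument: the paper likewise splits the code of Theorem \ref{thm F} into the first entry $b_1\in[n]$ (the root) and the tail $(b_2,\dots,b_n)$, which via the inverse Pr\"{u}fer construction corresponds to a rooted tree on $n$ vertices rooted at the loop vertex $b_n$, giving $BT_n\cong[n]\times RT_n$. Your observation that the root is recovered as $b_n$ and your cardinality check $n^{n-1}=n\cdot n^{n-2}$ are both consistent with the paper's construction.
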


\subsubsection{Type D Pr\"{u}fer codes}
In this section, we will construct the type D Pr\"{u}fer codes.

In order to define type D Pr\"{u}fer codes, we first need to modify the type D Goulden-Yong duals into certain kinds of rooted trees.

\begin{lemma}
    If we choose k vertices to be in the cycle of type D Goulden-Yong dual, for any vertex $v_i$ not in the cycle, there's only one possible edge between $v_i$ and one of the vertices in the cycle.
\end{lemma}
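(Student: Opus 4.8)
The plan is to exploit the planar structure of the type D folded dual rather than the algebra of the reflections directly. Recall from the preceding propositions that the folded dual can be drawn with the $n-1$ outer vertices placed in a fixed clockwise cyclic order on the outer circle, with the vertex labeled $1$ placed inside, with all chords non-crossing, and with the edge labels clockwise decreasing around every vertex. Recall also that the dual is a connected spanning graph on $n$ vertices with exactly $n$ edges, so it contains a unique cycle, and that this cycle has size at least $2$ and contains the vertex $1$. Deleting the cycle edges therefore leaves a forest, each component of which meets the cycle in exactly one vertex.

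First I would fix the chosen set of $k$ cycle vertices. Since these vertices occupy prescribed positions in the fixed cyclic order on the circle, the cycle is realized as a non-crossing closed curve in the disk, which partitions the disk into a bounded inner region (the one containing vertex $1$) and the complementary outer region. Any vertex $v_i$ not on the cycle then lies in exactly one of the gaps between two consecutive cycle vertices along the circle, and this gap is determined by the position of the arc carrying $v_i$.

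Next I would argue that the edge joining $v_i$ to the cycle is forced. If $v_i$ were joined to two distinct cycle vertices, the two corresponding chords together with an arc of the cycle would form a second cycle, contradicting the uniqueness of the cycle established above; hence $v_i$ is adjacent to at most one cycle vertex. To pin down which one, I would invoke the clockwise-decreasing labeling: among the cycle vertices bounding $v_i$'s gap, only one admits a chord to $v_i$ that is simultaneously non-crossing and compatible with the decreasing order of edge labels at both endpoints. This is precisely the condition that selects the parent of $v_i$ in the type A tree structure carried by the exterior chords, so uniqueness of the admissible edge follows once the cyclic positions are fixed.

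The hard part will be this last step, namely ruling out the a priori possibility that $v_i$ could attach equally well to either of the two cycle vertices bounding its gap. The non-crossing condition by itself permits both candidates, since both lie on the boundary of $v_i$'s region, so the argument must combine non-crossing with the monotonicity of the edge labels and with the fact that each edge represents a reflection of the form $e_a \pm e_b$ (which constrains its two endpoints). I expect that the clockwise-decreasing convention already imposed on the folded dual eliminates exactly one of the two candidates, leaving a single possible edge, which is what the statement asserts.
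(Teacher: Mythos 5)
Your setup (the non-crossing folded dual, clockwise-decreasing edge labels, the cycle partitioning the disk into gaps/regions) matches the paper's, but your argument has a genuine gap exactly where you say ``the hard part will be this last step'': you never prove that only one candidate cycle vertex admits an edge to $v_i$, you only express the expectation that the clockwise-decreasing convention forces this. That step is the entire content of the lemma. Note also that your preliminary observation --- that $v_i$ cannot be joined to two cycle vertices in a single graph, since that would create a second cycle --- establishes only a weaker, within-one-graph statement; the lemma, as it is used to get the $(n-1)$-to-$1$ correspondence with rooted trees, is about which attachments are \emph{possible} across all valid configurations with the given cycle vertex set, so the unique-cycle argument cannot substitute for the missing step.

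The paper closes this gap with an explicit label comparison: it views the non-cycle vertex as an edge $e_i$ of the folded chord diagram, incident to one cycle vertex and one non-cycle vertex; the non-cycle endpoint sits in one of the $k-1$ regions cut out by the cycle, clockwise of one bounding cycle edge and counterclockwise of the other, and since labels must decrease clockwise around every vertex, whether the label of $e_i$ is larger or smaller than the labels of the bounding cycle edges singles out the unique cycle vertex to which $e_i$ can attach. You would need to run this comparison explicitly rather than assert that it works. Finally, your candidate set is too small in one case: the vertex $1$ (on the inner circle) always lies on the cycle, so the region adjacent to it has three candidate attachment vertices on its boundary, not two; the paper handles this case separately ($e_i$ attaches to $1$ exactly when its label lies between the largest and smallest labels occurring on the cycle), and your two-candidate argument does not cover it.
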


\begin{proof}
    If we choose k vertices to be in the cycle of type D Goulden-Yong dual, then the cycle in the folded chord diagram partitions the rest of the diagram into $k-1$ regions.

    $v_i$ in the Goulden-Yong dual corresponds to an edge $e_i$ in the folded chord diagram that's incident to a vertex $u_j$ in the cycle and a vertex $u_k$ outside the cycle. $u_k$ lies in one of the $k-1$ regions and within the region, it's in the clockwise position of an edge in the cycle bounding the region and in the counterclockwise position of another edge in the cycle bounding the region. 
    
    Since the edge label around a vertex is clockwise decreasing, depending on if the edge labels of $e_i$ are bigger or smaller than the edge labels in the cycle, there's a unique vertex in the cycle incident to $e_i$. If the region contains $1$, then $e_i$ is incident to $1$ if the label of $e_i$ is between the biggest and the smallest labels in the cycle.
\end{proof}

\begin{prop}
    There's a $(n-1)$-to-1 correspondence between the set $DT_n$ to the set of rooted trees on $n$ vertices where the root is smaller than all its children.
\end{prop}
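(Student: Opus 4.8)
The plan is to factor the correspondence through the operation of forgetting the root. A graph in $DT_n$ has a single cycle $C$, and the only constraint placed on the root is that it differ from the smallest cycle vertex $m=\min(C)$; hence every unrooted graph underlying an element of $DT_n$ carries exactly $n-1$ admissible roots, namely all vertices other than $m$. So forgetting the root is already an $(n-1)$-to-$1$ surjection from $DT_n$ onto the set $U_n$ of underlying unrooted unicyclic spanning graphs. It then suffices to produce a bijection between $U_n$ and the set of rooted trees on $[n]$ whose root is smaller than all of its children, and to compose the two maps; the $n-1$ choices of root become the fibres of the composite.

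First I would describe the forward map $U_n\to\{\text{such trees}\}$ by cutting the cycle. Given $G\in U_n$ with cycle $C$ and $m=\min(C)$, delete from $C$ the edge joining $m$ to its larger cyclic neighbour, leaving a spanning tree $T$ that I root at $m$. That the output lands in the target set, i.e. that $m$ is smaller than each of its children in $T$, should follow from the clockwise-decreasing edge-label normal form established in the preceding propositions: the surviving cycle neighbour of $m$ is larger than $m$ because $m$ is the cycle minimum, and the branches attached at $m$ are exactly the chords whose labels are forced to exceed those at $m$, so every neighbour of $m$ in $T$ exceeds $m$.

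Then I would build the inverse. Given a rooted tree with root $r$ smaller than all its children, I must recover uniquely which vertices form the cycle, in what cyclic order, and which edge was removed. By the preceding Lemma, once the vertex set of the cycle is known, every remaining vertex has a single admissible edge to the cycle, so the forest hanging off the cycle is determined; the cyclic order is then forced by the requirement that the labels decrease clockwise around each vertex, and the reinserted edge is the one incident to $m=r$ that closes the cycle. Verifying that these two maps are mutually inverse, and that the reconstructed $G$ again lies in $U_n$, is the routine part. Combined with the $n-1$ admissible roots this yields the $(n-1)$-to-$1$ correspondence with $DT_n$.

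The main obstacle is the inverse direction, specifically proving that the cycle can be reconstructed canonically from the tree alone: cutting the cycle destroys its cyclic adjacencies, so I must argue that the condition ``root smaller than all its children,'' together with the clockwise-decreasing convention and the uniqueness statement of the preceding Lemma, pins down both the set of cycle vertices and their cyclic arrangement with no remaining ambiguity. Any residual freedom here would change the fibre size away from $n-1$ and spoil the count, so establishing this rigidity is the crux; once it is in hand, the bijection $U_n\leftrightarrow\{\text{rooted trees with root}<\text{children}\}$ and the claimed $(n-1)$-to-$1$ correspondence follow at once.
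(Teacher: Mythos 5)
Your proposal shares the paper's outer scaffolding --- forgetting the root is $(n-1)$-to-$1$ because any vertex other than the cycle minimum may serve as root, after which one wants a bijection between the unrooted graphs and rooted trees whose root is smaller than all its children --- but the core construction you substitute is genuinely different from the paper's, and the difference is fatal. The paper does \emph{not} cut a single cycle edge: it deletes \emph{all} edges of the cycle and joins the minimum cycle vertex $m$ to every other cycle vertex, so that in the output tree the children of the root record the entire cycle vertex set; the preceding Lemma (forced attachment of off-cycle vertices, canonical cyclic order) is then what lets one reconstruct the graph from exactly that data. Your map --- delete only the edge from $m$ to its larger cyclic neighbour and root at $m$ --- discards precisely the information of how far the cycle extends, so no inverse can exist. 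Concretely: the triangle on $\{1,2,3\}$ with pendant vertex $4$ attached to $3$, and the $4$-cycle $(1,2,3,4)$, both underlie elements of $DT_4$; in the first your rule deletes $\{1,3\}$, in the second it deletes $\{1,4\}$, and both outputs are the path $1\to 2\to 3\to 4$ rooted at $1$. So the ``rigidity'' you defer to the end as the crux is not merely unproved --- it is false for your construction, while the paper's collapse-to-star construction separates these two graphs (a path of length two below the root versus a star).

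Your proposed repair cannot close this gap either: you appeal to the clockwise-decreasing edge-label convention and to the preceding Lemma, but those are properties of folded chord diagrams, whereas both sides of the claimed bijection ($DT_n$ and the rooted labelled trees) are abstract graphs carrying no diagram structure; invoking diagram data to reconstruct the cycle from the bare tree presupposes the very correspondence being established. The same circularity undermines your forward-direction claim that every neighbour of $m$ in the cut tree exceeds $m$: in an abstract element of $DT_n$ nothing prevents a vertex smaller than $m$ from hanging off $m$ (take the triangle on $\{2,3,4\}$ with vertex $1$ pendant at $2$), so ``the branches attached at $m$ are exactly the chords whose labels exceed those at $m$'' is not something the definition of $DT_n$ gives you. (The paper's own write-up is admittedly terse on this last point as well, but its star construction at least preserves the data needed for its Lemma-based reconstruction; your edge-cutting destroys it.)
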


\begin{proof}
    If we ignore the root for the graphs in $DT_n$, each unrooted graph has $n-1$ copies in $DT_n$.

    Let $v_i$ be the smallest vertex in the cycle. Remove all edges in the cycle and add edges between $v_i$ and any other vertices in the cycle.

    Make $v_i$ the root of the tree. We now have a rooted tree.

    The root is smaller than all its children because we chose the smallest vertex in the cycle.

    By the previous lemma, each unrooted graph is determined by choosing a set of vertices in the cycle and choosing how the rest of the vertices form forests that are attached to the cycle. So each unrooted graph in $DT_n$ can be uniquely turned into a rooted tree on $n$ vertices where the root is smaller than all its children.
\end{proof}

The following construction is provided by Olivier Bernardi.

\begin{thm}[Bernardi] \label{Bernardi}
    Let $S=\{$rooted trees on $n$ vertices such that the root is smaller than its children$\}$ and 
    $T=\{$rooted trees on $n$ vertices such that $v_n$ is a leaf$\}$. There's a bijection between $g: S \rightarrow T$.
\end{thm}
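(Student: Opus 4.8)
The plan is to prove the theorem by exhibiting $g$ and its inverse explicitly, organizing both families around the same auxiliary data: a rooted tree on the $n-1$ vertices $[n]\setminus\{v_n\}$ (equivalently on $[n-1]$) together with one extra mark in $[n-1]$. Since a direct count gives $|S|=|T|=(n-1)^{n-1}$, such a common parametrization is exactly what is needed, and this cardinality will serve as the running sanity check. Concretely, I would build bijections $\alpha\colon S\to RT_{n-1}\times[n-1]$ and $\beta\colon T\to RT_{n-1}\times[n-1]$, where $RT_{n-1}$ is the set of rooted trees on $n-1$ vertices, and then set $g=\beta^{-1}\circ\alpha$.

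The map $\beta$ is the easy half. Given a rooted tree in $T$, the vertex $v_n$ is a leaf and (for $n\ge 2$) is never the root, so I would delete it and record its parent $p\in[n-1]$; this produces a rooted tree on $[n-1]$ together with the mark $p$, and the assignment is visibly reversible, since one recovers the original by reattaching $v_n$ as a leaf child of $p$. In particular this realizes $T\cong RT_{n-1}\times[n-1]$, whence $|T|=(n-1)^{n-2}\cdot(n-1)=(n-1)^{n-1}$.

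The construction of $\alpha$ is where the real work lies. Given a tree in $S$ with root $r$, deleting $r$ turns the subtrees hanging from its children into a rooted forest on $[n]\setminus\{r\}$ all of whose roots exceed $r$; this is precisely the content of the hypothesis that $r$ is smaller than its children. Relabeling order-preservingly to $[n-1]$ yields a rooted forest in which the smallest $r-1$ labels are forbidden to be roots. I would then convert this constrained forest, reversibly, into a rooted tree on $[n-1]$ together with a mark in $[n-1]$ that encodes the old root $r$. That the bookkeeping is consistent is confirmed by the identity $\sum_{r=1}^{n}\#\{\text{rooted forests on }[n]\setminus\{r\}\text{ with all roots} > r\}=(n-1)^{n-1}=|T|$, which I would verify directly (it already checks out for $n=3$, where the summands are $3,1,0$).

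The hard part will be making this $S$-side conversion a genuine bijection: one must choose a canonical way to splice the constrained forest into a single rooted tree so that both the forest structure and the old root $r$ are recoverable, and then check that the inverse assignment always returns a tree whose root is smaller than all of its children. The naive ``contract the root-neighbourhood and regraft'' rule fails, because it is already non-injective for $n=3$, so the encoding must be designed with care; this is the substance of Bernardi's construction. I expect the correct rule is most reliably discovered and validated by forcing compatibility with the intended application: composing $g$ with the leaf-deletion $\beta$ should turn an element of $S$ into a Prüfer-type code, and the target condition ``$v_n$ is a leaf'' is chosen precisely so that the standard type $A$ Prüfer algorithm applies and lands in $[n-1]^{n-1}$. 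Once the two maps are specified, verifying that $\alpha$ and $\beta$ are mutually inverse bijections onto $RT_{n-1}\times[n-1]$, backed by the cardinality count and small-case checks, completes the argument.
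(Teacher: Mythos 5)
Your proposal does not prove the theorem: it reduces everything to a bijection $\alpha\colon S\to RT_{n-1}\times[n-1]$ and then explicitly defers the construction of $\alpha$, conceding that the naive regrafting rule is non-injective and that ``the encoding must be designed with care; this is the substance of Bernardi's construction.'' But that encoding \emph{is} the theorem. The half you do carry out, $\beta\colon T\to RT_{n-1}\times[n-1]$ (delete the leaf $v_n$ and record its parent), is the routine part, and composing $\beta^{-1}$ with an unspecified $\alpha$ defines nothing. Your fallback is the cardinality claim $|S|=|T|=(n-1)^{n-1}$; for finite sets equal cardinality would at least give abstract existence of a bijection, but you do not prove it either --- the identity $\sum_{r=1}^{n}\#\{\text{rooted forests on }[n]\setminus\{r\}\text{ with all roots}>r\}=(n-1)^{n-1}$ is only asserted and checked at $n=3$. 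So neither route in the proposal is completed, and the gap sits exactly where the work lies. (Moreover, the paper needs a \emph{canonical} map: the type D Pr\"ufer code is built by composing this $g$ with leaf-deletion, so abstract equinumerosity would not suffice for the application even if you finished the count.)

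For comparison, here is the content your $\alpha$ would have to carry, which the paper supplies by direct surgery on the tree with no pass through $RT_{n-1}\times[n-1]$. Given $s\in S$ with root $v_k$: first, move the subtrees rooted at the children of $v_n$ to hang under $v_k$, so that $v_n$ becomes a leaf; second, list the children of $v_k$ not on the $v_k$--$v_n$ path in decreasing order $v_{k_1}>v_{k_2}>\dots>v_{k_j}$, string them into a descending chain (each $v_{k_{i+1}}$ a child of $v_{k_i}$, keeping its other descendants) with $v_k$ grafted at the bottom, and declare $v_{k_1}$ the new root. The hypothesis defining $S$ is what makes this invertible: since every child of $v_k$ in $s$ exceeds $v_k$, the path in $g(s)$ from the new root toward $v_n$ begins with the strictly decreasing run $v_{k_1}>\dots>v_{k_j}>v_k$ and stops decreasing precisely at $v_k$ (the next path vertex is an old child of $v_k$, hence larger). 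Thus the old root is recovered as the endpoint of the maximal decreasing initial run of that path, the chain can be flattened back into children of $v_k$, and the off-path subtrees under $v_k$ are returned to $v_n$. This identification of the old root via the decreasing run is the missing idea; without it (or an explicit substitute), your plan remains a restatement of the problem.
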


\begin{proof}
    We'll first describe the map $g$:
    
    Take $s \in S$, let $v_k$ be the root of $s$.

    Take the rooted forest of all descendants of $v_n$ where the roots are the children of $v_n$ and move it under $v_k$, i.e., make descendants of $v_n$ become descendants of $v_k$ while preserve the tree structure.

    There's a unique path $P$ between $v_k$ and $v_n$ in $s$.

    For all children of $v_k$ that are not on the path, we can arrange them in descending order, i.e. there's a sequence $(v_{k_1},\dots,v_{k_j})$ such that $k_i > k_{i+1}$.

    Make $v_{k_{i+1}}$ a child of $v_{k_i}$ for all $1 \leq i \leq j$ while preserving all other descendants.
    
    Make $v_{k_1}$ the root of $s$ instead of $v_k$. This completes the construction of $g(s)$.\\

    Now we will describe $g^{-1}$:

    Take $t \in T$, let $v_k$ be the root of $t$.

    There's a unique path $P$ between $v_k$ and $v_n$ in $t$.

    Order the vertices on the path $P$ as $(v_k,v_{i_1},v_{i_2},\dots,v_n)$ such that $v_{i_j}$ is a child of $v_{i_{j-1}}$.

    There exists an unique element $v_{i_l}$ such that $v_k > v_{i_1} > \dots > v_{i_l}$. (If the whole sequence is ascending, take $v_{i_l}$ to be $v_k$)

    Take the rooted forest of all descendants of $v_{i_l}$ where the roots are the children of $v_{i_l}$ and move it under $v_n$, i.e., make descendants of $v_{i_l}$ become descendants of $v_n$ while preserve the tree structure.

    For elements on the left of $v_{i_l}$ in the sequence, i.e. $(v_k,v_{i_1},\dots,v_{i_{l-1}})$, make each of them a child of $v_{i_l}$ along with all their descendants.

    Now $v_{i_l}$ is the root for the tree.

    This complete the construction of $g^{-1}$.
\end{proof}

\begin{rema}
    T is bijective to the set $\{f \ | \ f:[n-1] \rightarrow [n-1]\}$ based on Andr\'e  Joyal's bijective proof of Cayley's Formula\cite{JOYAL19811} if we view T as a set of vertebrates with the tail being $v_n$.
\end{rema}

\begin{defi}
    Given a graph in $DT_n$, its \textbf{type D Pr\"{u}fer code} $(d_1,\cdots,d_n)$ is constructed in the following way:

    -Record the vertex label of the root as $d_1$.

    -Unroot the graph and construct the corresponding rooted tree in $S$
    
    -Construct the corresponding graph in $T$ via the bijection above. For the steps below, we will work with this graph.

    -Record the vertex label of the parent of $v_n$ as $d_2$ and remove $v_n$ from the graph
    
    -Pick the smallest leaf.

    -Record the label of the parent of the vertex and remove the vertex from the graph

    -Repeat the process until there's one vertex left in the graph
\end{defi}

\begin{exa}
Below we have a rooted graph of $4$ edges on $4$ vertices.
\begin{center}

\tikzset{every picture/.style={line width=0.75pt}} %set default line width to 0.75pt        

\begin{tikzpicture}[x=0.75pt,y=0.75pt,yscale=-1,xscale=1]
%uncomment if require: \path (0,300); %set diagram left start at 0, and has height of 300

%Straight Lines [id:da958022914170414] 
\draw    (310,93) -- (359,149.6) ;
%Straight Lines [id:da7351538003705604] 
\draw    (310,93) -- (259,149.6) ;
%Straight Lines [id:da19629618067694699] 
\draw    (259,149.6) -- (259,220.6) ;
%Straight Lines [id:da03543949965114823] 
\draw    (310,93) -- (259,220.6) ;

% Text Node
\draw (304,72) node [anchor=north west][inner sep=0.75pt]   [align=left] {3};
% Text Node
\draw (354,152.6) node [anchor=north west][inner sep=0.75pt]   [align=left] {1};
% Text Node
\draw (247,143.6) node [anchor=north west][inner sep=0.75pt]   [align=left] {4};
% Text Node
\draw (246,210) node [anchor=north west][inner sep=0.75pt]   [align=left] {2};

\end{tikzpicture}
\end{center}

To construct its type D Pr\"{u}fer code, we first record the vertex label of the root. So $d_1=3$.

Then we turn it into a graph in $S$ by removing the edges in the cycle and add one edge between $2$ and $3$ and one edge between $2$ and $4$.

\begin{center}
\tikzset{every picture/.style={line width=0.75pt}} %set default line width to 0.75pt        

\begin{tikzpicture}[x=0.75pt,y=0.75pt,yscale=-1,xscale=1]
%uncomment if require: \path (0,300); %set diagram left start at 0, and has height of 300

%Straight Lines [id:da5690275371799591] 
\draw    (279,86.6) -- (243,122.6) ;
%Straight Lines [id:da21470400841999648] 
\draw    (279,86.6) -- (314,121.6) ;
%Straight Lines [id:da9354177236956989] 
\draw    (243,122.6) -- (243,165.6) ;

% Text Node
\draw (273,67) node [anchor=north west][inner sep=0.75pt]   [align=left] {2};
% Text Node
\draw (228,114) node [anchor=north west][inner sep=0.75pt]   [align=left] {3};
% Text Node
\draw (317,114) node [anchor=north west][inner sep=0.75pt]   [align=left] {4};
% Text Node
\draw (228,161) node [anchor=north west][inner sep=0.75pt]   [align=left] {1};

\end{tikzpicture}
\end{center}

Then we can turn it into a graph in $T$ by making $2$ a child of $3$ since $3$ is not on the path between $2$ and $4$

\begin{center}

\tikzset{every picture/.style={line width=0.75pt}} %set default line width to 0.75pt        

\begin{tikzpicture}[x=0.75pt,y=0.75pt,yscale=-1,xscale=1]
%uncomment if require: \path (0,300); %set diagram left start at 0, and has height of 300

%Straight Lines [id:da5690275371799591] 
\draw    (280,159.6) -- (243,122.6) ;
%Straight Lines [id:da21470400841999648] 
\draw    (280,159.6) -- (280,198.6) ;
%Straight Lines [id:da9354177236956989] 
\draw    (243,122.6) -- (206,159.6) ;

% Text Node
\draw (284,151) node [anchor=north west][inner sep=0.75pt]   [align=left] {2};
% Text Node
\draw (239,103) node [anchor=north west][inner sep=0.75pt]   [align=left] {3};
% Text Node
\draw (282,201.6) node [anchor=north west][inner sep=0.75pt]   [align=left] {4};
% Text Node
\draw (195,155) node [anchor=north west][inner sep=0.75pt]   [align=left] {1};

\end{tikzpicture}
\end{center}

Now we record the parent of $4$ and remove $4$. So $d_2=2$.

Then the smallest leaf is 1. So we record its parent and remove $1$. So $d_3=3$.

The smallest leaf now is $2$. So we record its parent and remove $2$. So $d_4=3$.

Therefore, the type D Pr\"{u}fer code for this graph is $(3,2,3,3)$

\end{exa}

\begin{thm}\label{thm G}
    Type $D$ Pr\"{u}fer code gives a bijection between $DT_n$, the set of  rooted simple spanning graphs on $n$ vertices with n edges where the root cannot be the smallest vertex in the cycle, and a set of sequences $S_D=\{(d_1,\dots,d_n) \ | \ d_i \in [n-1]\}$
\end{thm}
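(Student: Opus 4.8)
The plan is to factor the type $D$ Pr\"ufer code as a composition of maps already supplied by the paper together with one new Pr\"ufer-type bijection, and then to exhibit an explicit inverse. The first coordinate $d_1$ records the root, while the block $(d_2,\dots,d_n)$ encodes the underlying unrooted graph. For the latter I would use the Proposition preceding Theorem~\ref{Bernardi}, which gives an $(n-1)$-to-$1$ map from $DT_n$ onto the set $S$ of rooted trees whose root is smaller than its children (forgetting the root is $(n-1)$-to-$1$, and each unrooted unicyclic graph corresponds to a unique tree in $S$), followed by Bernardi's bijection $g\colon S\to T$ of Theorem~\ref{Bernardi}. Since there are $n-1$ admissible roots and $|S|=|T|=(n-1)^{n-1}$, the cardinalities match:
\[
|DT_n| = (n-1)\cdot|S| = (n-1)\cdot|T| = (n-1)\cdot(n-1)^{\,n-1} = (n-1)^n = |S_D|,
\]
so once the maps are assembled it suffices to build a well-defined inverse.

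The new ingredient is the claim that the leaf-stripping rule is a bijection from $T$ onto $[n-1]^{\,n-1}$. Given $t\in T$ with distinguished leaf $v_n$, note first that the root of a tree in $T$ is never $v_n$ (it is a non-root leaf, as is visible from Bernardi's construction, where the new root lies off the path to $v_n$). Recording the parent of $v_n$ as $d_2\in[n-1]$ and deleting $v_n$ therefore leaves a rooted tree $\tau$ on $[n-1]$. I would then show that $(d_3,\dots,d_n)$ is exactly the root-protected Pr\"ufer code of $\tau$ — repeatedly delete the smallest non-root leaf and record its parent until only the root remains — and that this is a bijection from rooted trees on $[n-1]$ onto $[n-1]^{\,n-2}$, by the same argument as the classical Pr\"ufer correspondence. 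In particular the root of $\tau$ is recovered as the final entry $d_n$, because when two vertices remain the unique surviving leaf has the root as its parent. The inverse is explicit: decode $(d_3,\dots,d_n)$ to $\tau$ and reattach $v_n$ as a child of the vertex labeled $d_2$, which returns the unique element of $T$.

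Assembling the stages yields the inverse of the full code: from $(d_2,\dots,d_n)$ one recovers $t\in T$ as above, applies $g^{-1}$ to obtain the tree in $S$, uses the bijection between $S$ and unrooted unicyclic graphs underlying the Proposition to recover the unrooted graph, and finally reads $d_1$ to reinstate one of its $n-1$ admissible roots. Each stage is a bijection, so the composite is a bijection $DT_n\to S_D$; the correspondence between the reflection $r_i$ and the vertex $i$ is preserved because every intermediate map only transports the fixed labelling carried by the folded dual.

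The step I expect to demand the most care is the bookkeeping for $d_1$ and the verification that the code really lands in $[n-1]^n$. The admissible roots form an $(n-1)$-element set that depends on the graph (all vertices except the smallest one on the cycle), so to make the target literally $[n-1]^n$ one must fix a canonical identification of this set with $[n-1]$ — for instance by recording the rank of the root among the admissible vertices rather than its raw label — since the raw label can be as large as $n$; the forbidden smallest cycle vertex is precisely the degree of freedom absorbed when passing from $DT_n$ to $S$. A secondary point is to confirm that the root is never stripped before the process terminates, i.e.\ that the root-protected rule is compatible with the plain smallest-leaf rule used in the type $A$ and type $B$ codes, which holds because the root is explicitly exempted and reappears as the last survivor $d_n$.
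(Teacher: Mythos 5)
Your proposal is correct and takes essentially the same route as the paper's own proof: $d_1$ records the root, $(d_2,\dots,d_n)$ encodes the underlying unrooted graph via the $(n-1)$-to-$1$ correspondence $DT_n \to S$, Bernardi's bijection $g\colon S\to T$, and the root-protected Pr\"ufer code on $T$ with $d_2$ the parent of $v_n$. Your closing remark about $d_1$ is well taken: the paper literally records the raw label of the root, which can be as large as $n$, so identifying the $n-1$ admissible roots with $[n-1]$ (e.g.\ by rank) as you propose is the correct reading of the statement and repairs a point the paper's proof glosses over.
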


\begin{proof}
    $\{(d_3,\dots,d_n)\}$ is bijective to the set of rooted tree on $n-1$ vertices. 

    $d_2$ indicates where to insert $v_n$ in the rooted tree, so $\{(d_2,\dots,d_n\}$ is bijective to $T$, the set of rooted trees on $n$ vertices such that $v_n$ is a leaf.

    Since there's a $n-1$-to-$1$ correspondence $DT_n$ and $T$ where $d_1$ indicates the root of $DT_n$, then $\{d_1,\dots,d_n\}$ is bijective to $S_D=\{(d_1,\dots,d_n) \ | \ d_i \in [n-1]\}$
\end{proof}

\subsection{Counting type A signed minimal factorizations via matrix-tree theorem}

We will review a version of the matrix-tree theorem for directed multigraphs.

\begin{defi}
    The \textbf{Laplacian of a directed multigraph $G$} with vertices $\{v_1,\dots,v_n\}$ is defined to be the $n \times n$ matrix whose $(i,j)$-th entry $l_{ij}$ is defined to be 

    \begin{equation}
    l_{ij}=\begin{cases}
    \text{outdegree of $v_i$}, & \text{if $i=j$},\\
    \text{-(number of directed edges from $v_i$ to $v_j$)}, & \text{if $i \neq j$}.
    \end{cases}
    \end{equation}
\end{defi}

\begin{thm}[Matrix-tree theorem for directed multigraph]
    Let G be a directed multigraph, L be its Laplacian and $L_i$ be the minor of $L$ with $i$-th row and column removed. Then the number of spanning trees oriented toward $v_i$ is equal to $det(L_i)$.
\end{thm}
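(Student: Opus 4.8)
The plan is to prove this via the Cauchy--Binet formula after factoring the Laplacian through a pair of edge-incidence matrices. Fix the root and, after relabeling, assume $i=n$. For a directed edge $e$ with tail $u$ and head $w$, I would introduce two $n\times|E|$ matrices: the tail-indicator matrix $M$ with $M_{v,e}=1$ exactly when $v$ is the tail of $e$ (and $0$ otherwise), and the signed incidence matrix $B$ with $B_{v,e}=1$ when $v$ is the tail, $B_{v,e}=-1$ when $v$ is the head, and $0$ otherwise. A direct entrywise check shows $L=MB^{T}$: the $(v,v)$ entry becomes the out-degree of $v$, and the $(v,w)$ entry with $v\neq w$ becomes minus the number of edges $v\to w$, matching the definition of the Laplacian. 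Deleting row and column $n$ of $L$ amounts to deleting row $n$ from each of $M$ and $B$; writing $\tilde M,\tilde B$ for these $(n-1)\times|E|$ matrices, we get $L_n=\tilde M\,\tilde B^{T}$.

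First I would apply Cauchy--Binet to obtain $\det(L_n)=\sum_{S}\det(\tilde M_S)\det(\tilde B_S)$, where $S$ runs over all $(n-1)$-element subsets of the edge set and $\tilde M_S,\tilde B_S$ denote the square submatrices on the columns indexed by $S$. The next step is to read off when each factor is nonzero. Each column of $\tilde M_S$ has at most one nonzero entry (the tail of that edge, unless the tail is $v_n$, in which case the column is zero), so $\det(\tilde M_S)\neq0$ forces $S$ to assign each of the $n-1$ non-root vertices exactly one out-edge and to contain no edge with tail $v_n$; in that case $\tilde M_S$ is a permutation matrix and $\det(\tilde M_S)=\pm1$. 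By the standard unimodularity of the signed incidence matrix, $\det(\tilde B_S)\in\{0,\pm1\}$ and is nonzero exactly when $S$ is a spanning tree of the underlying undirected graph. The conjunction of the two conditions says precisely that $S$ is a spanning tree in which every non-root vertex has out-degree one and $v_n$ has out-degree zero, i.e.\ a spanning tree oriented toward $v_n$.

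It remains to check that for every such $S$ the product $\det(\tilde M_S)\det(\tilde B_S)$ equals $+1$; this sign bookkeeping is the main obstacle. The key observation is that permuting the columns of $\tilde M_S$ and $\tilde B_S$ by the same permutation multiplies both determinants by the same sign, so the product is independent of how $S$ is ordered. I would therefore index the columns by the non-root vertices themselves, letting the column for $v_j$ be the unique out-edge of $v_j$ in $S$. With this choice $\tilde M_S$ is literally the identity matrix, so $\det(\tilde M_S)=1$, while $(\tilde B_S)_{j,j}=+1$ on the diagonal and $(\tilde B_S)_{j,k}=-1$ exactly when $v_j$ is the parent of $v_k$ in the tree. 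Ordering the vertices so that each vertex precedes its parent (possible since a tree oriented toward $v_n$ is acyclic) makes $\tilde B_S$ lower triangular with ones on the diagonal, whence $\det(\tilde B_S)=1$ and the product is $+1$.

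Putting these together, every spanning tree oriented toward $v_n$ contributes $+1$ to the Cauchy--Binet sum and every other subset contributes $0$, so $\det(L_n)$ equals the number of spanning trees oriented toward $v_n$; undoing the relabeling gives the statement for arbitrary $i$. I expect the only genuinely delicate point to be the orientation of the triangularity argument --- making sure the ``tail $=+1$, head $=-1$'' sign convention in $B$ together with the children-before-parents vertex order really yields $+1$ rather than $(-1)^{n-1}$ --- which is exactly where the direction ``toward $v_i$'' (as opposed to away from it) enters.
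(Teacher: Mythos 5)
Your proposal is correct, but there is nothing in the paper to compare it against: the paper only \emph{reviews} this theorem as a known result (it appears with no proof, as background for applying it to $G_{A_n}$). Your argument is the standard Cauchy--Binet proof of the directed matrix-tree theorem, and the details check out: the factorization $L = MB^{T}$ matches the paper's definition of the Laplacian entrywise, the characterization of the nonvanishing terms correctly identifies the in-trees rooted at $v_n$ (a spanning tree of the underlying undirected graph in which every non-root vertex has out-degree one and the root has out-degree zero is exactly a spanning tree oriented toward $v_n$), and your sign argument is sound --- indexing the columns of both $\tilde M_S$ and $\tilde B_S$ by the non-root vertices via their unique out-edges makes $\tilde M_S$ the identity, and a simultaneous row/column reordering of $\tilde B_S$ with children before parents (which preserves the determinant) exhibits it as unitriangular, so each in-tree contributes exactly $+1$. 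Two minor remarks: you quote rather than prove the unimodularity lemma for the reduced signed incidence matrix (acceptable, as it is the same lemma underlying the undirected case); and your identity $L = MB^{T}$, like the theorem itself under the paper's definition of the diagonal as out-degree, implicitly assumes the multigraph has no self-loops --- a harmless restriction here, since the graphs $G_{A_n}$ to which the paper applies the theorem are loop-free.
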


For each $A_n$, we can assign a graph to it so that applying the matrix-tree theorem will give us the number of minimal factorizations in $A_n$

\begin{defi}
Define a directed multigraph $G_{A_n}$ as follows: 
$G_{A_n}$ contains $n+1$ vertices. Edges are defined as follows:

-Between $v_{n+1}$ and $v_i$ where $i \in [n]$, there's a directed edge from $v_i$ to $v_{n+1}$ for each $i \in [n]$

-Between $v_{i}$ and $v_j$ where $i\neq j \in [n]$, there's a directed edge from $v_i$ to $v_j$ and a directed edge from $v_j$ to $v_i$
\end{defi}

\begin{prop}
    Denote the Laplacian of $G_{A_n}$ by $L$, then $L_{n+1}$ is the number of minimal factorizations in $A_n$
\end{prop}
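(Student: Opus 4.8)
The plan is to invoke the matrix-tree theorem for directed multigraphs stated just above. Applied to $G_{A_n}$ with the deleted vertex being $v_{n+1}$, it identifies $\det(L_{n+1})$ with the number of spanning trees of $G_{A_n}$ oriented toward $v_{n+1}$. So the task reduces to counting these oriented spanning trees and matching the count to the number of minimal factorizations. (Here I read the statement as asserting that $\det(L_{n+1})$, rather than the minor $L_{n+1}$ itself, equals the count, consistently with Corollary \ref{thm H}.)

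First I would record the shape of the graph. For $i \in [n]$ the vertex $v_i$ has a directed edge to $v_{n+1}$ and a directed edge to each $v_j$ with $j \in [n]$, $j \neq i$, while $v_{n+1}$ has no outgoing edges. Thus every $v_i$ with $i \in [n]$ has a directed edge to each of the other $n$ vertices, and $v_{n+1}$ is the unique sink. In a spanning tree oriented toward $v_{n+1}$, each $v_i$ with $i \le n$ contributes exactly one outgoing edge (to its parent on the path to the root) while $v_{n+1}$ contributes none, so $v_{n+1}$ is forced to be the root. Because each non-root vertex may take any other vertex as its parent, the oriented spanning trees of $G_{A_n}$ rooted at $v_{n+1}$ are in bijection with labeled trees on $\{v_1,\dots,v_{n+1}\}$ rooted at $v_{n+1}$.

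Next I would observe that rooting at the fixed vertex $v_{n+1}$ is free: every labeled tree on $n+1$ vertices has a unique such rooting, so the number of oriented spanning trees equals the number of labeled trees on $n+1$ vertices, namely $(n+1)^{n-1}$. By the Goulden-Yong theorem cited above, labeled trees on $n+1$ vertices are in bijection with minimal factorizations of a Coxeter element in $A_n$, so $\det(L_{n+1})$ equals the number of minimal factorizations in $A_n$, completing the argument.

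As a sanity check I would compute the minor directly: deleting row and column $n+1$ leaves the $n \times n$ matrix $L_{n+1} = (n+1)I_n - J_n$, where $J_n$ is the all-ones matrix; its eigenvalues are $1$ (once) and $n+1$ (with multiplicity $n-1$), giving $\det(L_{n+1}) = (n+1)^{n-1}$, in agreement with Cayley's formula and the Goulden-Yong count. I expect the only mild subtlety to be confirming that the one-directional edges to $v_{n+1}$ exclude no trees: in a tree oriented toward $v_{n+1}$ every used edge incident to $v_{n+1}$ must already point toward it, so the absent reverse edges never occur and no oriented spanning tree is lost. The rest is routine.
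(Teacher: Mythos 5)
Your proposal is correct and follows essentially the same route as the paper: apply the matrix-tree theorem to identify $\det(L_{n+1})$ with the spanning trees of $G_{A_n}$ oriented toward $v_{n+1}$, observe that these are exactly the labeled trees on $n+1$ vertices (i.e.\ the Goulden-Yong duals), and conclude via the Goulden-Yong bijection with minimal factorizations. Your additional verifications --- that the one-directional edges into $v_{n+1}$ exclude no trees, and the direct computation $\det\left((n+1)I_n - J_n\right) = (n+1)^{n-1}$ --- are details the paper leaves implicit, but the argument is the same.
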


\begin{proof}
    $L_{n+1}$ counts the number of spanning trees oriented toward $v_{n+1}$ in $G_{A_n}$ and these are precisely the Goulden-Young duals in $A_n$.

    Since the Goulden-Yong duals are bijective to the minimal factorizations, therefore $L_{n+1}$ counts the number of minimal factorizations in $A_n$
\end{proof}

To count signed minimal factorizations, we want to give weights to edges in $G_{A_n}$. Before that, we need a result by Igusa and Sen that characterize the relatively projective objects for a special Coxeter element.

\begin{thm}\cite{igusa2023exceptional}
    For a Coxeter element of cycle type $(1,2,\dots,{n+1})$ in $A_n$. 

    A vertex in a Goulden-Young dual corresponds to a one-way reflection if and only if it's a descent or a child of the root

    A vertex in a Goulden-Young dual corresponds to a two-way reflection if and only if it's an ascent and not a child of the root.
\end{thm}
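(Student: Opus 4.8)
The plan is to translate one-wayness into a statement purely about the Goulden-Yong tree in three stages: first reduce to an interval-refinement condition on the partial products $r_1\cdots r_j$, then read that condition off the noncrossing chord diagram, and finally transport it across the planar duality to the labelled tree.

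First I would invoke Theorem \ref{thm A} together with the dictionary between perpendicular categories and parabolic subgroups recalled in its proof. The simplification special to $c=r_1\cdots r_n$ is the telescoping identity
\begin{equation*}
c\,r_n r_{n-1}\cdots r_i = r_1 r_2 \cdots r_{i-1},
\end{equation*}
valid because each $r_j$ is an involution, so the suffix cancels the matching tail of $c$. Writing $p_j=r_1\cdots r_j$, the category $\{E_i,\dots,E_n\}^{\perp}$ corresponds to $W_{p_{i-1}}$ and $\{E_{i+1},\dots,E_n\}^{\perp}$ to $W_{p_i}$, so Theorem \ref{thm A} and the characterisations in its proof reduce the claim to the following: $r_i$ is one-way if and only if $W_{p_{i-1}}$ is an interval refinement of $W_{p_i}=W_{p_{i-1}}r_i$.

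Next I would interpret the partial products through the noncrossing partition lattice of $c$. Each $p_{i-1}$ is a product of $i-1$ transpositions whose chords form a forest on $\{1,\dots,n+1\}$, and its blocks are noncrossing; on a block $B=\{b_1<\cdots<b_k\}$ the induced cycle is $(b_1\,b_2\,\cdots\,b_k)$, so with respect to the standard positive system determined by $c$ the simple roots of $W_B$ are exactly the consecutive differences $e_{b_j}-e_{b_{j+1}}$. Passing to $p_i=p_{i-1}r_i$ merges the two blocks $B\ni a$ and $B'\ni b$ joined by $r_i=(a\,b)$. Comparing simple roots, every simple root of $W_{p_{i-1}}$ survives in $W_{p_i}$ precisely when $B$ and $B'$ are not nested, i.e. one of them lies entirely on one side of the other in the order $1<\cdots<n+1$; if instead one block sits inside a bounded gap of the other, the consecutive difference spanning that gap is destroyed and interval refinement fails. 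Thus $r_i$ is one-way exactly for the ``sequential'' merges and two-way exactly for the ``nested'' merges. I have checked the direction against the two factorizations of $(1\,2\,3)$: the sequential merges in $(1\,2)(2\,3)$ are both one-way, whereas the nested merge in $(1\,3)(1\,2)$ is two-way.

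Finally I would transport this nesting dichotomy to the tree using Theorem \ref{thm C} and the Goulden-Yong relabelling, in which the label of the chord $r_i$ slides away from the root $R(1)$ (labelled $n+1$) onto the vertex $i$. The content to verify is that, after this slide, a sequential merge becomes either a child of the root or an edge whose child label is smaller than its non-root parent's label (a descent), while a nested merge becomes an edge whose child label exceeds its non-root parent's label (an ascent). I expect this last step to be the main obstacle: it requires tracking how the circular position of the two merged arcs, which is what encodes nesting, is converted by the label-sliding into the order relation between a vertex and its parent, and carefully separating the root-incident edges — whose child is automatically a child of the root — so that the ascent/descent comparison is only applied over non-root parents. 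Once this bookkeeping is in place the two displayed equivalences follow, and an induction on $i$, adding one chord at a time as in the proof of Theorem \ref{thm C}, packages the whole argument.
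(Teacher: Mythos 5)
The paper itself does not prove this statement: it is quoted from Igusa--Sen \cite{igusa2023exceptional} and used as a black box, so there is no in-paper proof to compare against and your attempt must stand on its own. Your first three stages are correct and cleanly executed. The telescoping identity $c\,r_n\cdots r_i=r_1\cdots r_{i-1}$ holds, the reduction (via Theorem \ref{thm A}, or directly from the definition of one-wayness together with Proposition 3.4 of Biane--Josuat-Verg\`es) of one-wayness of $r_i$ to ``every simple root of $W_{p_{i-1}}$ remains a simple root of $W_{p_i}$,'' with $p_j=r_1\cdots r_j$, is the right setup, and your survival analysis for a merge of two noncrossing blocks $B,B'$ is accurate: all consecutive differences survive iff one block lies entirely to one side of the other, and refinement fails iff one block sits inside a bounded gap of the other (your criterion, read carefully, also handles the wrap-around case such as $B'=\{1,n+1\}$, $B=\{3,5\}$). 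This is consistent with the paper's running example $(e_2-e_1,e_4-e_2,e_3-e_2)$, where $r_1,r_2$ are one-way and $r_3$ is two-way.

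The genuine gap is your final stage, which you yourself flag as ``the main obstacle'' and then do not carry out. The theorem is precisely a statement about the Goulden--Yong tree statistics, and everything you have established reformulates one-wayness on the chord-diagram side (sequential versus nested merges); the entire content of the Igusa--Sen result is the translation of that dichotomy through the label-sliding into the comparison between the label of vertex $i$ and the label of its parent. Concretely, what remains to be shown is: with vertex $i$ corresponding to the dual edge crossing the chord of $r_i$, the merge effected by $r_i$ is sequential if and only if either that dual edge lands in the root region $R(1)$, or the next chord $r_j$ crossed on the path toward the root satisfies $j>i$. Proving this requires an inductive argument exploiting the clockwise-decreasing arrangement of edge labels around each vertex of the chord diagram (the key lemma of Goulden--Yong), together with care about how circular position encodes nesting; it is not routine bookkeeping, and without it neither of the two displayed equivalences has been established. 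So the proposal is a sound and partially correct strategy, but it is incomplete at exactly the step that constitutes the theorem.
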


In a rooted labeled tree, each vertex except the root can be associated to the edge between the vertex and its parent vertex. If we interpret Goulden-Young dual as a directed tree oriented towards the root, then an ascent $v_i$ with its parent $v_j$ is associated to the directed edge from $v_i$ to $v_j$ such that $i>j$ and $j \neq n+1$. A descent $v_i$ with its parent $v_j$ is associated to the directed edge from $v_i$ to $v_j$ such that $i<j$ and $j \neq n+1$. Children of the root $v_i$ are associated to the directed edge from $v_i$ to $v_{n+1}$

\begin{defi}
    For the directed edges in the graph $G_{A_n}$, we can define a weight function on the set of directed edges $w:E \rightarrow \{1,2\}$ where $w(e)=1$ if $e$ is a directed edge from $v_i$ to $v_j$ such that $i > j$ and $i\neq n+1$. $w(e)=2$ if $e$ is a directed edge from $v_i$ to $v_j$ such that $i < j$ (including the case $j=n+1$).%and $j \neq n+1$ or $e$ is a directed edge from $v_i$ to $v_{n+1}$
    
    Denote Laplacian of $G_{A_n}$ by L. Define a weighted version $L'$ as follows:

    \begin{equation}
    l'_{ij}=\begin{cases}
    \sum_{e_i \text{are directed edges from $v_i$}} w(e_i), & \text{if $i=j$},\\
    0, & \text{if i=n+1} \\
    1, & \text{if $i > j$ and $i \neq n+1$} \\
    2, & \text{if $i < j$} \\% and $j \neq n+1$} \\
    %2, & \text{if j=n+1}
    \end{cases}
    \end{equation}
\end{defi}

\begin{cor}\label{thm H}
    For the weighted Laplacian $L'$ of $G_{A_n}$, $det(L'_{n+1})=n!C_{n+1}$ where $C_n$ is the n-th Catalan number.
\end{cor}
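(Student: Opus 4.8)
The plan is to interpret $\det(L'_{n+1})$ through a weighted version of the matrix-tree theorem and then to identify the resulting weighted count with the number of signed minimal factorizations, which for type $A_n$ is the ordered $W$-Catalan number. First I would apply the weighted matrix-tree theorem to $L'$ (this is the standard weighted generalization of the version stated above, taking off-diagonal entries with the sign dictated by the theorem): the minor $\det(L'_{n+1})$ equals the sum over all spanning trees $T$ of $G_{A_n}$ oriented toward $v_{n+1}$ of the product $\prod_{e \in T} w(e)$ of the edge weights. As in the unweighted Proposition, these oriented spanning trees are exactly the Goulden-Yong duals of the minimal factorizations of the Coxeter element of cycle type $(1,2,\dots,n+1)$ in $A_n$, so the sum runs over all such factorizations.

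Next I would match the edge weights to the one-way/two-way dichotomy. Each non-root vertex $v_i$ of a Goulden-Yong dual is associated with the directed edge from $v_i$ to its parent, hence with a reflection $r_i$. By the Igusa-Sen theorem, $r_i$ is one-way precisely when $v_i$ is a descent or a child of the root, and two-way precisely when $v_i$ is an ascent that is not a child of the root. Comparing with the definition of $w$: children of the root correspond to edges $v_i \to v_{n+1}$ with $i < n+1$, carrying weight $2$; descents correspond to edges $v_i \to v_j$ with $i < j$ and $j \neq n+1$, again weight $2$; ascents that are not children of the root correspond to edges $v_i \to v_j$ with $i > j$ and $i \neq n+1$, weight $1$. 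Thus $w(e) = 2$ exactly for one-way reflections and $w(e) = 1$ exactly for two-way reflections, so $\prod_{e \in T} w(e) = 2^{m(T)}$, where $m(T)$ is the number of one-way reflections in the corresponding factorization.

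Summing, $\det(L'_{n+1}) = \sum_f 2^{m(f)}$ over all minimal factorizations $f$. By the definition of signed minimal factorizations (each one-way reflection carries a sign in $\{\pm 1\}$ and each two-way reflection carries the sign $+1$), this sum is exactly the number of signed minimal factorizations in $A_n$. By the bijection between signed minimal factorizations and ordered cluster-tilting objects, this count equals the ordered $W$-Catalan number $\frac{n!\,\prod_{i=1}^n (e_i + h + 1)}{|W|}$.

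Finally I would evaluate this number for type $A_n$. Here $|W| = (n+1)!$, the Coxeter number is $h = n+1$, and the exponents are $e_i = i$ for $1 \le i \le n$, so $\prod_{i=1}^n (e_i + h + 1) = \prod_{i=1}^n (i + n + 2) = (n+3)(n+4)\cdots(2n+2) = \frac{(2n+2)!}{(n+2)!}$. Substituting gives $\frac{n!}{(n+1)!}\cdot\frac{(2n+2)!}{(n+2)!} = \frac{n!\,(2n+2)!}{(n+1)!\,(n+2)!} = n!\cdot\frac{1}{n+2}\binom{2n+2}{n+1} = n!\,C_{n+1}$, as claimed. The step requiring the most care is the edge-weight matching, since it rests on correctly translating the Igusa-Sen descent/ascent characterization into the sign conditions on the pair $(i,j)$ in the definition of $w$; once that dictionary is pinned down, the remainder is a direct application of the matrix-tree theorem and a routine factorial simplification.
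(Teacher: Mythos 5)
Your proof is correct and follows essentially the same route as the paper's (very terse) proof: the paper simply asserts that $\det(L'_{n+1})$ counts signed minimal factorizations in $A_n$ and that this count is $n!\,C_{n+1}$, which are exactly the two facts you establish. Your write-up fills in the details the paper leaves implicit — the weighted matrix-tree theorem, the translation of the Igusa--Sen ascent/descent dichotomy into the weight function so that each tree contributes $2^{\#\text{one-way}}$, and the evaluation of the ordered $W$-Catalan number $\frac{n!\prod_{i=1}^n(e_i+h+1)}{|W|}$ for type $A_n$ — and all of these steps check out.
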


\begin{proof}
    We can see that $det(L'_{n+1})$ counts the number of signed minimal factorizations in $A_n$. The number of signed minimal factorizations in $A_n$ is $n!C_{n+1}$
\end{proof}

\begin{exa} For $A_3$, $G_{A_3}$ is the following directed graph.
\begin{center}
\begin{tikzpicture}[x=0.75pt,y=0.75pt,yscale=-1,xscale=1]
%uncomment if require: \path (0,300); %set diagram left start at 0, and has height of 300

%Straight Lines [id:da10418679256847452] 
\draw    (209,107.6) -- (209,172.6) ;
\draw [shift={(209,174.6)}, rotate = 270] [color={rgb, 255:red, 0; green, 0; blue, 0 }  ][line width=0.75]    (10.93,-3.29) .. controls (6.95,-1.4) and (3.31,-0.3) .. (0,0) .. controls (3.31,0.3) and (6.95,1.4) .. (10.93,3.29)   ;
%Straight Lines [id:da9366088209797305] 
\draw    (227,89.6) -- (290,89.6) ;
\draw [shift={(292,89.6)}, rotate = 180] [color={rgb, 255:red, 0; green, 0; blue, 0 }  ][line width=0.75]    (10.93,-3.29) .. controls (6.95,-1.4) and (3.31,-0.3) .. (0,0) .. controls (3.31,0.3) and (6.95,1.4) .. (10.93,3.29)   ;
%Straight Lines [id:da9647607725665628] 
\draw    (226,197.6) -- (289,197.6) ;
\draw [shift={(291,197.6)}, rotate = 180] [color={rgb, 255:red, 0; green, 0; blue, 0 }  ][line width=0.75]    (10.93,-3.29) .. controls (6.95,-1.4) and (3.31,-0.3) .. (0,0) .. controls (3.31,0.3) and (6.95,1.4) .. (10.93,3.29)   ;
%Straight Lines [id:da4207232562524723] 
\draw    (222,170.6) -- (222,107.6) ;
\draw [shift={(222,105.6)}, rotate = 90] [color={rgb, 255:red, 0; green, 0; blue, 0 }  ][line width=0.75]    (10.93,-3.29) .. controls (6.95,-1.4) and (3.31,-0.3) .. (0,0) .. controls (3.31,0.3) and (6.95,1.4) .. (10.93,3.29)   ;
%Straight Lines [id:da964762213840531] 
\draw    (301,171.6) -- (301,107.6) ;
\draw [shift={(301,105.6)}, rotate = 90] [color={rgb, 255:red, 0; green, 0; blue, 0 }  ][line width=0.75]    (10.93,-3.29) .. controls (6.95,-1.4) and (3.31,-0.3) .. (0,0) .. controls (3.31,0.3) and (6.95,1.4) .. (10.93,3.29)   ;
%Straight Lines [id:da29515266467929124] 
\draw    (286,182.6) -- (228,182.6) ;
\draw [shift={(226,182.6)}, rotate = 360] [color={rgb, 255:red, 0; green, 0; blue, 0 }  ][line width=0.75]    (10.93,-3.29) .. controls (6.95,-1.4) and (3.31,-0.3) .. (0,0) .. controls (3.31,0.3) and (6.95,1.4) .. (10.93,3.29)   ;
%Straight Lines [id:da5070467519060857] 
\draw    (237,101) -- (292.67,163.11) ;
\draw [shift={(294,164.6)}, rotate = 228.13] [color={rgb, 255:red, 0; green, 0; blue, 0 }  ][line width=0.75]    (10.93,-3.29) .. controls (6.95,-1.4) and (3.31,-0.3) .. (0,0) .. controls (3.31,0.3) and (6.95,1.4) .. (10.93,3.29)   ;
%Straight Lines [id:da4200240996526776] 
\draw    (284,171.6) -- (231.33,112.1) ;
\draw [shift={(230,110.6)}, rotate = 48.48] [color={rgb, 255:red, 0; green, 0; blue, 0 }  ][line width=0.75]    (10.93,-3.29) .. controls (6.95,-1.4) and (3.31,-0.3) .. (0,0) .. controls (3.31,0.3) and (6.95,1.4) .. (10.93,3.29)   ;
%Straight Lines [id:da16020785889194422] 
\draw    (230,169.6) -- (289.64,105.07) ;
\draw [shift={(291,103.6)}, rotate = 132.75] [color={rgb, 255:red, 0; green, 0; blue, 0 }  ][line width=0.75]    (10.93,-3.29) .. controls (6.95,-1.4) and (3.31,-0.3) .. (0,0) .. controls (3.31,0.3) and (6.95,1.4) .. (10.93,3.29)   ;

% Text Node
\draw (207,82) node [anchor=north west][inner sep=0.75pt]   [align=left] {1};
% Text Node
\draw (209,181) node [anchor=north west][inner sep=0.75pt]   [align=left] {2};
% Text Node
\draw (297,181) node [anchor=north west][inner sep=0.75pt]   [align=left] {3};
% Text Node
\draw (297,81) node [anchor=north west][inner sep=0.75pt]   [align=left] {4};

\end{tikzpicture}
\end{center}

Laplacian of $G_{A_3}$ is $L=
\begin{bmatrix}
3 & -1 & -1 & -1\\
-1 & 3 & -1 & -1\\
-1 & -1 & 3 & -1\\
0 & 0 & 0 & 0\\
\end{bmatrix}$
\end{exa}

$det(L_4)=16=4^2$ which is the number of minimal factorizations in $A_3$.

The weighted Laplacian of $G_{A_3}$ is $L'=
\begin{bmatrix}
6 & -2 & -2 & -2\\
-1 & 5 & -2 & -2\\
-1 & -1 & 4 & -2\\
0 & 0 & 0 & 0\\
\end{bmatrix}$

$det(L_4')=84=3!\cdot 14=3!C_4$ which is the number of signed minimal factorization in $A_3$.

\begin{rema}
    We can also count the number of minimal factorizations in $B_n$ and $D_n$ using the matrix-tree theorem since $BT_n$ and $DT_n$ can be broken down into copies of rooted trees.

    However, in order to count sign minimal factorizations using weighted Laplacian, one first needs the characterizations of one-way reflections in some Coxeter elements in $B_n$ and $D_n$ which would be the next step for research.
\end{rema}

\subsection*{Acknowledgement} The authors thank Matthieu Josuat-Verg\`es for suggesting to them the equivalence between relative projectivity and one-wayness.
The first author thanks Theo Douvropoulos for helpful conversations about one-way reflections. The authors thank Olivier Bernardi for providing Theorem $\ref{Bernardi}$.

%\bibliographystyle{plain}
%\bibliography{mybib}

\begin{thebibliography}{10}

\bibitem{apostolakis2018duality}
Nikos Apostolakis.
\newblock A duality for labeled graphs and factorizations with applications to
  graph embeddings and hurwitz enumeration, 2018.

\bibitem{BESSIS2003647}
David Bessis.
\newblock The dual braid monoid.
\newblock {\em Annales Scientifiques de l'\'Ecole Normale Sup\'erieure},
  36(5):647--683, 2003.

\bibitem{biane2019noncrossing}
Philippe Biane and Matthieu Josuat-Verg{\`e}s.
\newblock Noncrossing partitions, bruhat order and the cluster complex.
\newblock In {\em Annales de l'Institut Fourier}, volume~69, pages 2241--2289,
  2019.

\bibitem{MR1265279}
William Crawley-Boevey.
\newblock Exceptional sequences of representations of quivers [{MR}1206935
  (94c:16017)].
\newblock In {\em Representations of algebras ({O}ttawa, {ON}, 1992)},
  volume~14 of {\em CMS Conf. Proc.}, pages 117--124. Amer. Math. Soc.,
  Providence, RI, 1993.

\bibitem{MR1897927}
Ian Goulden and Alexander Yong.
\newblock Tree-like properties of cycle factorizations.
\newblock {\em J. Combin. Theory Ser. A}, 98(1):106--117, 2002.

\bibitem{igusa2023exceptional}
Kiyoshi Igusa and Emre Sen.
\newblock Exceptional sequences and rooted labeled forests, 2023.

\bibitem{igusa2017signed}
Kiyoshi Igusa and Gordana Todorov.
\newblock Signed exceptional sequences and the cluster morphism category, 2017.

\bibitem{ingalls2009noncrossing}
Colin Ingalls and Hugh Thomas.
\newblock Noncrossing partitions and representations of quivers.
\newblock {\em Compositio Mathematica}, 145.

\bibitem{MR3395490}
Matthieu Josuat-Verg\`es.
\newblock Refined enumeration of noncrossing chains and {H}ook formulas.
\newblock {\em Ann. Comb.}, 19(3):443--460, 2015.

\bibitem{JOYAL19811}
Andr\'e Joyal.
\newblock Une th\'eorie combinatoire des s\'eries formelles.
\newblock {\em Advances in Mathematics}, 42(1):1--82, 1981.

\bibitem{ojakian2019combinatorial}
Kerry Ojakian.
\newblock A combinatorial interpretation of the bijection of goulden and yong.
\newblock {\em AKCE International Journal of Graphs and Combinatorics}, 2019.

\bibitem{reading2007clusters}
Nathan Reading.
\newblock Clusters, coxeter-sortable elements and noncrossing partitions.
\newblock {\em Transactions of the American Mathematical Society},
  359(12):5931--5958, 2007.

\bibitem{ringel1994braid}
Claus~Michael Ringel.
\newblock The braid group action on the set of exceptional sequences of a
  hereditary artin algebra.
\newblock {\em Contemporary Mathematics}, 171:339--339, 1994.

\bibitem{article}
Jian-yi Shi.
\newblock The enumeration of coxeter elements.
\newblock {\em Journal of Algebraic Combinatorics}, 6:161--171, 04 1997.

\end{thebibliography}

\end{document}